\DeclarePairedDelimiter{\ceil}{\lceil}{\rceil}
\newtheorem{thm}{Theorem}[section]
\newtheorem{theorem}[thm]{Theorem} \newtheorem{proposition}[thm]{Proposition}  
\newtheorem{lemma}[thm]{Lemma}
\newtheorem*{claim}{Claim}
\newtheorem{thmx}{Theorem}
\newtheorem{corx}[thmx]{Corollary}
\newtheorem{quex}[thmx]{Question}
\theoremstyle{definition}
\newtheorem{definition}[thm]{Definition}
\newtheorem{remark}[thm]{Remark}
\newtheorem{example}[thm]{Example}
\numberwithin{equation}{section}
\DeclareMathOperator{\dist}{\mathsf{dist}}
\newcommand{\ZZ}{\mathbb{Z}}
\tikzstyle{blackNode}=[fill=black, draw=black, shape=circle]
\DeclareMathOperator{\wcop}{\mathsf{wCop}}
\DeclareMathOperator{\scop}{\mathsf{sCop}}
\newcommand{\sq}[2]{#1 ~\square ~ #2}
\newcommand{\lx}[2]{#1 \times #2}
\newcommand{\xsq}[2]{#1 \boxtimes  #2}
\newcommand{\cir}[3]{#1 \circ_{#3}  #2}
\title{ Coarse geometry  of the Cops and robber game}
\author[Lee]{Jonathan Lee}
\author[Mart\'inez-Pedroza]{Eduardo Mart\'inez Pedroza}
\author[Rodr\'iguez-Quinche]{Juan Felipe Rodr\'iguez-Quinche}
\email{eduardo.martinez@mun.ca}
\email{frodriguezqu@mun.ca}
\address{Department of Mathematics and Statistics. Memorial University of Newfoundland. St. John's, NL. Canada}
\date{\today}
\begin{document}
\maketitle
\begin{abstract}
We introduce two variations of the cops and robber game on graphs. These games yield two invariants in $\ZZ_+\cup\{\infty\}$ for any connected graph $\Gamma$, the \emph{weak cop number $\wcop(\Gamma)$} and \emph{strong cop number $\scop(\Gamma)$}. These invariants satisfy that $\scop(\Gamma)\leq\wcop(\Gamma)$. Any graph that is finite or a tree has strong cop number one. These new invariants are preserved under small local perturbations of the graph, specifically, both the weak and strong cop numbers are quasi-isometric invariants of connected graphs. More generally, we prove that if $\Delta$ is a quasi-retract of $\Gamma$ then $\wcop(\Delta)\leq\wcop(\Gamma)$ and $\scop(\Delta)\leq\scop(\Gamma)$. We exhibit families of examples of graphs with arbitrary weak cop numbers (resp. strong cop number). We prove that hyperbolic graphs have strong cop number one. We also prove that one-ended non-amenable locally-finite vertex-transitive graphs have infinite weak cop number. We raise the question of whether there exists a connected vertex transitive graph with finite weak (resp. strong) cop number different than one.
\end{abstract}
\section{Introduction}
\begin{figure}[t]
\centering
\begin{tikzpicture}[scale=0.8]
\node at (0,-0.3) {$u$};
\node at (2,-2) {\large$\mathbf{\Gamma_2}$};
\foreach \Point in {(0,0),(4,0),(8,0),(6,0.5),(6,-0.5),(8,1),(8,-1)}
\filldraw[black] \Point circle (2pt);
\foreach \Point in {(9,2.25),(9,-2.25)}
\draw (0,0) -- \Point;
\foreach \Point in {(11,2.75),(11,-2.75)}
\draw[dashed] (0,0) -- \Point;
\foreach \X in {1,2,3,4}{
\draw (2*\X,0)--++(0,0.5*\X);
\draw (2*\X,0)--++(0,-0.5*\X);
\filldraw[black] (2*\X,0.5*\X) circle (2pt);
\filldraw[black] (2*\X,-0.5*\X) circle (2pt);
}
\draw[dashed] (10,2.5)--(10,-2.5);
\end{tikzpicture}
\caption{The graph $\Gamma_2$ satisfies $\wcop(\Gamma_2)=\scop(\Gamma_2)=2$.}
\label{fig:two}
\end{figure}
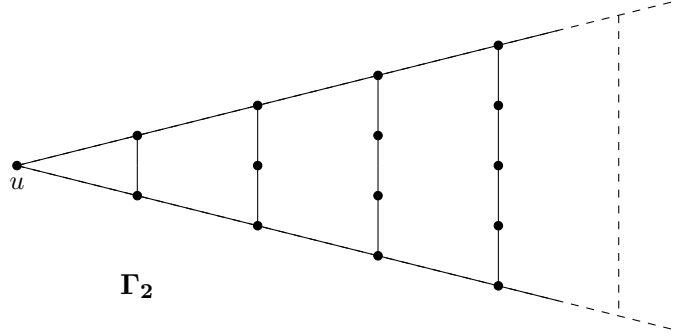

The cops and robber game was introduced independently by Quilliot~\cite{Quilliot} and Nowakowski and Winkler~\cite{Nowa}. This is a perfect information two player game on an undirected graph, where one player controls a set of cops and the other one controls a single robber. On the graph each cop and the robber chooses a vertex to occupy, with the cops choosing first. The game then alternates between cops and the robber moving along adjacent vertices, with the cops moving first. The cops win if, after a finite number
of rounds, a cop occupies the same vertex as the robber; and the robber wins if he can avoid capture indefinitely. The cop number of a graph is the minimum number of cops necessary to always capture a robber. Different variations of this game have been studied specially on finite graphs, see the book by Bonato and Nowakowski~\cite{Bonato2011}.
In this article, we introduce a variation of the cops and robber game which defines invariants of connected graphs, the \emph{weak cop number} and the \emph{strong cop number}, which take values in $\mathbb{Z}_+\cup\{\infty\}$. The main motivation is to introduce invariants of infinite connected graphs that are robust under small local perturbations, or in the language of coarse geometry, quasi-isometric invariants. The variation of the game that we introduce is a combination of the one studied in~\cite{article1} where the cops and the robber have different speeds, and the one in ~\cite{article2, article3} where the cops capture if the distance to the robber is less than a fixed number.

Let us introduce some notation and conventions before describing our variation of the game. All graphs considered in this article are undirected, have no double edges, and have no loops.  The vertex and edge sets of a graph $\Gamma$ are denoted by $V(\Gamma)$ and $E(\Gamma)$  respectively.
The \emph{length} of a path in $\Gamma$ is the number of edges,  a path  of minimal length between  vertices $u$ and $v$ is called a \emph{geodesic} between $u$ to $v$, and the \emph{path distance} $\dist_\Gamma(u,v)$ is the length of a geodesic between $u$ and $v$. In a connected graph $\Gamma$, the \emph{path distance} $\dist_\Gamma$ defines a metric on $V(\Gamma)$. In  case  there is no ambiguity, we use $\dist$ instead of $\dist_\Gamma$.

 \subsection*{Description of the game.}
Consider a connected graph $\Gamma$ and let $\dist$ denote the \emph{path distance} on $V(\Gamma)$.  The variation of the cops and robber game on $\Gamma$ that we consider in this article is defined as follows. This is a perfect information game in which the players are a set of cops and a robber, and the game depends on the following parameters: the number of cops $n$, the speed of the cops $\sigma$, the speed of the robber $\psi$, the reach (or radius of capture) of the cops $\rho$, a vertex $v$ of $\Gamma$, and a positive integer $R$. Initially, the $n$ cops choose vertices $c^1_0,\ldots ,c^n_0$ as their initial positions. Then, knowing the initial positions of the cops, the robber chooses a vertex $r_0$ as his initial position. Then the cops and the robber move alternately with the cops moving first. A move of the cops followed by a move of the robber is called a \emph{stage}. The vertices where the cops and robber are located at the end of the $k$-stage are denoted as $c^1_k,\ldots ,c^n_k$ and $r_k$, respectively.
At the beginning of the $k$-stage, each cop can move from its current position to any vertex at a distance at most $\sigma$, that is,
$\dist(c^i_{k-1}, c^i_{k})\leq\sigma $ for every $i\in\{1,\ldots ,n\}$. The robber is \emph{captured} during the $k$-stage if $\dist(r_{k-1}, c^i_{k})\leq \rho$ for some $i\in\{1,\ldots ,n\}$. After the cops have moved, if the robber has not been captured, the robber can move from its current position $r_{k-1}$ to a vertex $r_k$ if there is a path from $r_{k-1}$ to $r_k$ of length at most $\psi$ such that every vertex in the path is at distance larger than $\rho$ from any cop.  The cops win the game if eventually they can protect the $R$-ball centered at $v$, that means, the robber is captured or there is $N>0$ such that $\dist(v,r_k)>R$ for all $k\geq N$.
\subsection*{Definition of the weak and strong cop numbers.}
We say that a graph $\Gamma$ is \emph{$\mathsf{CopWin}(n,\sigma,\rho,\psi, R)$} if for any vertex $v$ of $\Gamma$, $n$ cops with speed $\sigma$ and reach $\rho$ can eventually protect the $R$-ball centered at $v$.
The definitions of the weak cop number and the strong cop number differ in the order in which the parameters of the game are chosen by the two players. For the weak cop number, the robber has an information advantage, the cops choose their speed $\sigma$ and reach $\rho$ and then the robber, knowing this information, chooses his speed $\psi$ and the radius of the ball to protect $R$. For the strong cop number the cops have the advantage of choosing their reach $\rho$ after knowing the robber's speed. More precisely, these invariants are defined as follows:
\begin{itemize}
\item 	We say that $\Gamma$ is \emph{$n$-weak cop win} if there exists $\sigma \in \mathbb{Z}_{>0}$ and $\rho \in \mathbb{Z}_{\geq 0}$ such that for any $\psi, R \in \mathbb{Z}_{>0}$, $\Gamma$ is $\mathsf{CopWin}(n,\sigma,\rho,\psi , R)$. In symbols,
\[ \text{$\Gamma$ is $n$-weak cop win} \Longleftrightarrow \exists\ \sigma, \rho \ \forall\ \psi, R \colon \text{$\Gamma$ is } \mathsf{CopWin}(n,\sigma,\rho,\psi , R). \]
\item
	
We say that $\Gamma$ is \emph{$n$-strong cop win} if there exists $\sigma \in \mathbb{Z}_{>0}$ such that for any $\psi\in \mathbb{Z}_{>0}$, there is $\rho \in \mathbb{Z}_{\geq 0}$ such that for any $R \in \mathbb{Z}_{>0}$, $\Gamma$ is $\mathsf{CopWin}(n,\sigma,\rho,\psi , R)$. In symbols, 	
\[\text{$\Gamma$ is $n$-strong cop win} \leq n \Longleftrightarrow \exists\ \sigma\ \forall\ \psi\ \exists\ \rho\ \forall\ R \colon \text{$\Gamma$ is } \mathsf{CopWin}(n,\sigma,\rho,\psi , R). \]
\end{itemize}
The \emph{strong cop number $\scop(\Gamma)$} of a graph $\Gamma$ is defined as the
smallest value of $n$ such that $\Gamma$ is $n$-strong cop win, with $\scop(\Gamma)=\infty$ if there is no such $n$.
The \emph{weak cop number $\wcop(\Gamma)$} is defined analogously as the smallest $n$ such that $\Gamma$ is $n$-weak cop win.
Observe that
\[ 0< \scop(\Gamma) \leq \wcop(\Gamma) .\]
The infinite path has weak and strong cop number one, a single cop can push the robber away from any ball. An example of a graph $\Gamma_2$
such that $\wcop(\Gamma_2)=\scop(\Gamma_2)=2$ is illustrated in Figure~\ref{fig:two}. In $\Gamma_2$, two cops starting from the vertex $u$ and moving simultaneously away from $u$ along the two main ``branches" will either push a robber away, or enclose the robber in one of the vertical paths followed by capture; and one can give an argument that one cop is never enough if the ball to protect is large enough. These ideas are generalized and stated as Theorem~\ref{thmx:Range} which states the existence of graphs with arbitrary weak and strong cop numbers.
For the rest of the introduction, $\Gamma$ denotes a connected graph.
\subsection*{Strong cop number of hyperbolic graphs}
It is an observation that finite graphs and trees
have strong cop number one. The notion of hyperbolic graph, introduced by Gromov~\cite{gromov}, is a generalization of the notion of tree. All finite graphs are hyperbolic. For a definition of hyperbolic graph and the proof of the following result see Section~\ref{sec:hyperbolic}.
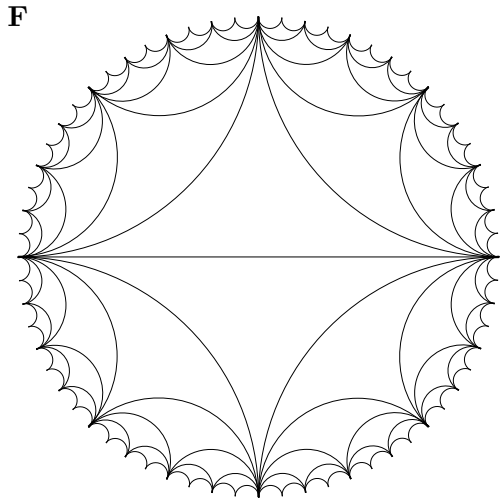
\begin{figure}[t]
\begin{tikzpicture}[scale=0.4]
\node at (-8,8) {\large $\mathbf{F}$};
\draw [ultra thin] (-8,0) -- (8,0);
\foreach \i in {0,1,2,3} {%
\draw [ultra thin] (90*\i:8) arc (270+90*\i:180+90*\i:8);}
\foreach \i in {0,1,...,7} {%
\draw [ultra thin] (45*\i:8) arc (270+45*\i:135+45*\i:3.3);}
\foreach \i in {0,1,...,15} {%
\draw [ultra thin] (22.5*\i:8) arc (270+22.5*\i:112.5+22.5*\i:1.6);}
\foreach \i in {0,1,...,31} {%
\draw [ultra thin] (11.25*\i:8) arc (270+11.25*\i:101.25+11.25*\i:0.8);}
\foreach \i in {0,1,...,63} {%
\draw [ultra thin] (5.625*\i:8) arc (270+5.625*\i:95.625+5.625*\i:0.4);}
\end{tikzpicture}
\caption{For the Farey graph $\mathbf F$, $\wcop(\mathbf F)=\scop(\mathbf F)=1$.}
\label{fig:Farey}
\end{figure}

\begin{figure}
\centering
\includegraphics[width=0.55\textwidth]{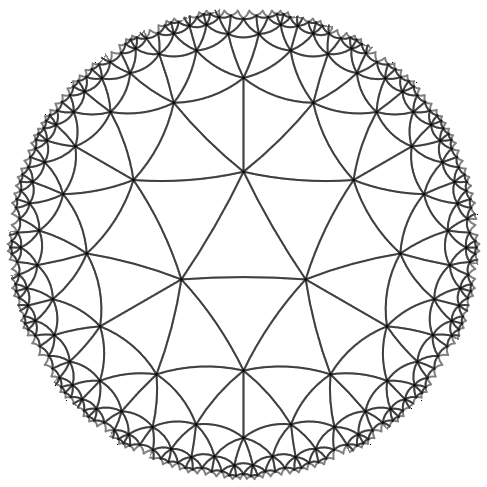}
\caption{Any graph $H$ arising as a regular tiling of the hyperbolic plane satisfies $\wcop(H)=\infty$ and $\scop(H)=1$. }
\label{fig:7-3-tiling}
\end{figure}

\begin{thmx}[Theorem~\ref{hypcopwin1}]\label{thmx:hyperbolic}
If $\Gamma$ is hyperbolic, then $\scop(\Gamma)=1$.
\end{thmx}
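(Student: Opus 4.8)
The plan is to have the single cop play an \emph{inflating gate} strategy based at $v$. After enlarging the hyperbolicity constant we may fix an integer $\delta\ge 1$ such that every geodesic triangle in $\Gamma$ is $\delta$-slim, and for each vertex $w$ fix a geodesic $\gamma_w$ from $v$ to $w$, writing $\gamma_w(t)$ for its vertex at distance $t$ from $v$ (for $0\le t\le\dist(v,w)$). I would have the cop commit to the speed $\sigma:=2\delta+1$ \emph{before} the robber picks $\psi$; then, once $\psi$ is revealed, set the reach $\rho:=\delta+\psi$. Given a vertex $v$ and radius $R$, the cop starts at $c_0:=v$ and at the $k$-th stage (knowing the robber's current position $r_{k-1}$) moves to $c_k:=\gamma_{r_{k-1}}(A_k)$, where $A_k:=\min(k,R+1)$; if ever $r_{k-1}=v$ the cop stays at $v$ and captures. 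Thus the cop always guards the point where the chosen geodesic from $v$ to the robber meets the sphere of radius $A_k$, and $A_k$ climbs by one each stage until it freezes at $R+1$.

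The only place hyperbolicity enters is the following lemma, which I would prove first: if $w,w'$ are vertices with $\dist(w,w')\le\psi$, $\dist(v,w)>t+\delta+\psi$ and $\dist(v,w')\ge t$, then $\dist(\gamma_w(t),\gamma_{w'}(t))\le 2\delta$. Apply $\delta$-slimness to the triangle with sides $\gamma_w$, $\gamma_{w'}$ and a geodesic $[w,w']$: the vertex $\gamma_w(t)$ is within $\delta$ of the union of the other two sides; it cannot lie within $\delta$ of $[w,w']$, since that would force $\dist(v,w)=\dist(v,\gamma_w(t))+\dist(\gamma_w(t),w)\le t+\delta+\psi$; hence it lies within $\delta$ of some vertex $q$ of $\gamma_{w'}$, and $|\dist(v,q)-t|\le\delta$ gives $\dist(q,\gamma_{w'}(t))\le\delta$. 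The crucial point is that $2\delta$ does not depend on $\psi$: the gate the cop is guarding drifts by at most $2\delta$ whenever the robber moves, no matter how fast the robber is, which is exactly why the fixed cop speed $\sigma=2\delta+1$ suffices, while the large reach $\rho\sim\psi$ is needed only to make it costly for the robber to ever sit near the gate.

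The analysis then runs by induction on the stage. As long as the robber has not been captured, I claim $\dist(v,r_{k-1})>A_k+\rho$: ``not captured at stage $k$'' means $\dist(r_{k-1},c_k)>\rho$, and $\dist(r_{k-1},c_k)=\dist(v,r_{k-1})-A_k$ because $c_k=\gamma_{r_{k-1}}(A_k)$ lies on $\gamma_{r_{k-1}}$ (the base case $k=1$ is the same computation). This yields two things. First, the cop's moves are legal: $\dist(c_0,c_1)=A_1\le 1$, and for $k\ge 2$ one factors the move as $c_{k-1}=\gamma_{r_{k-2}}(A_{k-1})\to\gamma_{r_{k-1}}(A_{k-1})\to\gamma_{r_{k-1}}(A_k)=c_k$, where the first step has length at most $2\delta$ by the lemma (its hypotheses follow from the inductive bound at stage $k-1$ together with $\dist(r_{k-2},r_{k-1})\le\psi$ and $\delta\ge 1$) and the second has length $|A_k-A_{k-1}|\le 1$, so $\dist(c_{k-1},c_k)\le 2\delta+1=\sigma$. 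Second, if the robber is never captured then for all $k\ge R+1$ we have $A_k=R+1$, so $\dist(v,r_{k-1})>R+1+\rho>R$, hence $\dist(v,r_k)>R$ for every $k\ge R$ and the cop has protected the $R$-ball at $v$. In either case the cop wins. Since $v$ and $R$ were arbitrary, $\Gamma$ is $1$-strong cop win, and as $\scop(\Gamma)\ge 1$ always, $\scop(\Gamma)=1$.

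The only real delicacy is the bookkeeping: keeping the quantifier order straight (the speed $\sigma$ is chosen before $\psi$, the reach $\rho$ after) and chasing the ``$\ge$ versus $>$'' slippage through the induction. If some off-by-one resists, I would slow the ramp (let $A_k$ increase by one only every few stages) or inflate the constants to $\rho=C(\delta+\psi)$ and $\sigma=C\delta$; this changes nothing essential, since the entire content of the proof is the $\psi$-independent drift bound on the gate supplied by slim triangles.
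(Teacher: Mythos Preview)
Your proof is correct and follows the same overall strategy as the paper: a single cop with speed $\sigma=2\delta+1$ shadows the robber by staying on a geodesic from the basepoint to the robber's current position, and $\delta$-slimness of the triangle on $v,r_{k-1},r_k$ shows the cop's target drifts by at most $2\delta$ per stage. Your execution is cleaner than the paper's in one respect. The paper deduces that the cop cannot lie $\delta$-close to the third side $[r_n,r_{n+1}]$ by invoking the Bridson--Haefliger lemma $\dist(x,C)\le\delta|\log_2 l(C)|+1$ applied to the robber's actual path $C$, and this forces the larger reach $\rho=\delta|\log_2\psi|+\delta+\psi+1$. You instead observe directly that the cop sits at distance $>\rho=\delta+\psi$ from $r_{k-2}$ along $\gamma_{r_{k-2}}$, hence at distance $>\delta$ from every point of the \emph{geodesic} $[r_{k-2},r_{k-1}]$ (which has length $\le\psi$); no auxiliary lemma is needed and the reach is smaller. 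Your explicit gate height $A_k=\min(k,R+1)$ also makes the ``robber is pushed past radius $R$'' conclusion immediate, whereas the paper argues only that $\dist(u_0,c_n)$ is strictly increasing. The bookkeeping you flag (well-definedness of $\gamma_{r_{k-1}}(A_k)$ and of the intermediate point $\gamma_{r_{k-1}}(A_{k-1})$) goes through from the inductive bound plus $\delta\ge 1$, so no inflation of constants is needed.
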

The class of hyperbolic graphs contains many interesting subclasses. For example, the \emph{Farey graph} is a hyperbolic graph that has as vertices all rational numbers expressed as reduced fractions $\frac pq$ with $q>0$, together with a vertex for $\frac 1 0$. Two vertices $\frac ab$ and $\frac cd$ span an edge when the matrix $\begin{pmatrix} a & c\\ b & d\end{pmatrix}$ has determinant $\pm 1$, an schematic of the graph is in Figure~\ref{fig:Farey} and for background see~\cite{HatcherTopNumbers}. It is an observation that a single cop with speed and reach one can protect balls of arbitrary radius in the Farey graph, and hence the weak and strong cop numbers of the Farey graph are equal to one.
The class of graphs arising as regular tilings of the hyperbolic plane are hyperbolic, for instance Figure~\ref{fig:7-3-tiling} shows the graph arising from the order-7 triangular tiling.
\begin{corx}
Any graph arising as a regular tiling of the hyperbolic plane has strong cop number one.
\end{corx}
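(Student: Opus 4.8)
The plan is to deduce the corollary from Theorem~\ref{thmx:hyperbolic} by checking that the one-skeleton of a regular tiling of the hyperbolic plane is a hyperbolic graph. Fix such a tiling $\mathcal{T}$ of $\mathbb{H}^2$ by regular $p$-gons with $q$ meeting at each vertex (so that $(p-2)(q-2)>4$), and let $H$ be its one-skeleton, viewed as a connected graph with the path metric $\dist_H$. Since $0<\scop(H)$ holds for every connected graph, it suffices to show $\scop(H)\leq 1$, and by Theorem~\ref{thmx:hyperbolic} this reduces to proving that $H$ is hyperbolic.

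First I would produce a quasi-isometry between $H$ and $\mathbb{H}^2$. The symmetry group $G$ of $\mathcal{T}$ acts on $\mathbb{H}^2$ by isometries, properly discontinuously and cocompactly, and it acts on $H$ with finitely many orbits of vertices and of edges. Consequently, viewing $V(H)\subset\mathbb{H}^2$, the vertex set is a coarsely dense subset of $\mathbb{H}^2$ (it contains a $G$-orbit), and the inclusion $V(H)\hookrightarrow\mathbb{H}^2$ is a quasi-isometric embedding: each edge of $H$ has $\mathbb{H}^2$-length bounded above by the (finite) maximal diameter of a tile and bounded below by the (positive) minimal distance between distinct vertices of $\mathcal{T}$, so $\dist_H$ and the restriction of the $\mathbb{H}^2$-metric to $V(H)$ are bi-Lipschitz equivalent. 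Coarse density then upgrades this to a quasi-isometry $H\to\mathbb{H}^2$; alternatively one can invoke the Milnor–\v{S}varc lemma to see that both $H$ and $\mathbb{H}^2$ are quasi-isometric to $G$ equipped with a word metric.

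Since $\mathbb{H}^2$ is $\mathsf{CAT}(-1)$ and hence a Gromov-hyperbolic geodesic metric space, and Gromov hyperbolicity is a quasi-isometry invariant among geodesic (more generally, length) spaces, it follows that $H$ is a hyperbolic graph. Theorem~\ref{thmx:hyperbolic} then gives $\scop(H)=1$, as claimed.

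The hard part will be the bookkeeping in the quasi-isometry step rather than anything conceptual: one must fix the interpretation of ``regular tiling'' so that an appropriate symmetry group (e.g.\ the full symmetry group or its orientation-preserving subgroup) acts cocompactly, and then argue that the upper and lower bounds on edge lengths are genuinely finite and positive — both of which follow from there being only finitely many $G$-orbits of cells. Once this is set up, the appeal to hyperbolicity and to Theorem~\ref{thmx:hyperbolic} is immediate.
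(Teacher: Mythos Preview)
Your proposal is correct and follows exactly the paper's approach: the paper simply asserts that graphs arising as regular tilings of the hyperbolic plane are hyperbolic and then invokes Theorem~\ref{thmx:hyperbolic}. You supply more justification than the paper does (the quasi-isometry to $\mathbb{H}^2$ via Milnor--\v{S}varc and the quasi-isometry invariance of hyperbolicity), but the route is the same.
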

In contrast to Theorem~\ref{thmx:hyperbolic}, the weak cop number of an arbitrary hyperbolic graph is not necessarily one.  For example, graphs in the  class defined by the above corollary have infinite weak cop number, see Corollary~\ref{corx:HyperbolicTilingWeakCop}. Let us mention here that the connection between cops and robber games and Gromov's hyperbolicity has been a subject of interest, see for example in~\cite{chapolin, article1}.
The graph arising as the regular tiling by squares of the Euclidean plane, known as the square grid, is a classical example of an infinite graph that is not hyperbolic.

\begin{thmx}[Theorem \ref{thm:SquareGrid}]
The square grid has infinite weak cop number.
\end{thmx}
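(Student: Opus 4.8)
The plan is to show that for every $n\in\ZZ_{>0}$ the square grid $\Gamma=\ZZ^2$ is not $n$-weak cop win. Unwinding the definitions, this amounts to: given \emph{any} cop speed $\sigma$ and reach $\rho$, exhibit a robber speed $\psi$, a radius $R$, a vertex $v$, and a robber strategy (against every play of the cops) that is never captured and keeps $r_k$ inside the $R$-ball about $v$ for \emph{all} $k$ — which certainly prevents the cops from protecting that ball. Since $\mathsf{CopWin}$ quantifies over all $v$, it suffices to defeat the cops for one $v$, which by homogeneity of the grid we take to be the origin $v=(0,0)$; we will take $R$ and $\psi$ to be suitable absolute-constant multiples of $n(\sigma+\rho+1)$.

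The conceptual core is a linear separation estimate for the grid. First, a connected union of $n$ metric balls of radius $t$ in $\ZZ^2$ has diameter at most $2nt$ (chain the balls through consecutive intersection points). Combined with a discrete Jordan-curve argument, this shows that each bounded component of the complement of a union $D=\bigcup_{i=1}^{n}B_t(c^i)$ is enclosed by a loop lying in a single connected sub-union of the balls, hence has diameter $O(nt)$; therefore $D$ together with all the regions it encloses is a disjoint union of sets of diameter $O(nt)$. Consequently, once $R$ exceeds a fixed multiple of $nt$, the ball $B_R(0)$ cannot lie inside $D$ together with its enclosed regions, i.e.\ the unbounded component of $\ZZ^2\setminus D$ meets $B_R(0)$. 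This is exactly where two-dimensionality enters — a tree is cut in two by a single vertex, whereas disconnecting a region of $\ZZ^2$ from infinity costs a wall whose length is linear in its diameter — and making this discrete-topology statement precise is the main technical point.

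Granting this, the robber maintains, at the end of each stage $k$, the invariant: $r_k\in B_R(0)$, and $r_k$ lies in the unbounded component $U_k$ of $\ZZ^2\setminus D_k^{+}$, where $D_k^{+}=\bigcup_i B_{\rho+\sigma}(c_k^i)$ is the buffered danger zone at time $k$. The buffer prevents capture: since $r_{k-1}\notin D_{k-1}^{+}$ means $\dist(r_{k-1},c_{k-1}^i)>\rho+\sigma$ for all $i$, after the cops move $\dist(r_{k-1},c_k^i)>\rho$, so $r_{k-1}$ is not captured in stage $k$. For the inductive step, note $D_k\subseteq D_{k-1}^{+}$, so $r_{k-1}$ still lies in the unbounded component $W_k$ of $\ZZ^2\setminus D_k$; applying the separation estimate to the $n$ balls of radius $\rho+\sigma$ produces a target $t_k\in U_k\cap B_R(0)$, and since $D_k\subseteq D_k^{+}$ we have $U_k\subseteq W_k$, so $t_k$ and $r_{k-1}$ lie in the connected set $W_k\subseteq\ZZ^2\setminus D_k$. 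A routine grid estimate — follow a grid geodesic from $r_{k-1}$ to $t_k$ and detour around each obstructing cluster of $D_k$, the clusters having total perimeter $O(n\rho)$ — bounds the length of a suitable path in $W_k$ by $O(R+n\rho)\le\psi$. The robber traverses this path and sets $r_k=t_k$, restoring the invariant; the base case $k=0$ is the separation estimate applied to $D_0^{+}$. Hence the robber survives forever with $r_k\in B_R(0)$ for all $k$, so the cops never protect $B_R(0)$, and $\Gamma$ is not $\mathsf{CopWin}(n,\sigma,\rho,\psi,R)$. As $\sigma,\rho$ and $n$ were arbitrary, $\wcop(\Gamma)=\infty$.

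I expect the main obstacle to be the clean formulation and proof of the separation estimate — in particular the discrete Jordan-curve step showing that each bounded complementary region of a union of balls is enclosed by a connected sub-union, so that the region it encloses has diameter linear in $n\rho$ — together with the bookkeeping of buffer radii needed to close the induction exactly (this is why the buffer $\rho+\sigma$, rather than $\rho$, must appear in $D_k^{+}$). The path-length estimate in the inductive step is routine for the grid but should be stated carefully, since it is what forces $\psi$ to be chosen as a multiple of $R$.
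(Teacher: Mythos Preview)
Your proposal is correct but takes a genuinely different route from the paper. The paper's argument is purely combinatorial and very explicit: inside $B_R(u_0)$ it places a row of $n+2$ congruent squares of side $2(n+n\rho+\rho+\sigma)$; by pigeonhole at least two squares are cop-free at every stage, and the robber simply stays at the centre of a cop-free square. The move between centres is handled by a direct counting argument --- among $2(n+n\rho)$ horizontal translates of the straight path to the target, each cop can block at most $2\rho+1$, so at least one translate is safe --- with no topology whatsoever. Your approach instead runs through a separation estimate (bounded complementary components of a union of $n$ balls of radius $t$ have diameter $O(nt)$, via a discrete Jordan-curve argument) and keeps the robber in the unbounded component of a buffered danger zone.

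What each buys: the paper's proof is entirely elementary and self-contained, with explicit constants and no discrete topology. Your argument is more conceptual and, notably, is essentially a hand-rolled instance of the strategy the paper later uses for one-ended non-amenable vertex-transitive graphs (Theorem~\ref{1eN-A}): there the role of your diameter bound on filled-in clusters is played by Cheeger-constant estimates, and the role of your target $t_k\in U_k\cap B_R$ is played by the safe points of Lemma~\ref{lem:spoint}. So your route, once the discrete Jordan step is written out carefully, would give a unified treatment of the square-grid case and the general non-amenable case. You are right that the Jordan-curve step (each bounded complementary region is enclosed by a single connected sub-union of balls) and the bookkeeping of the $\rho$ versus $\rho+\sigma$ buffers are where the care is needed; the path-length estimate around filled-in clusters is indeed routine for the grid.
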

Surprisingly, we have not been able to compute the strong cop number of the square grid. It is not difficult to verify that its strong cop number is larger than one. We suspect a positive answer to the following question.
\begin{quex}
Is the strong cop number of the square grid infinite?
\end{quex}
\begin{figure}
	\centering
\begin{tikzpicture}[scale=0.5]
\node at (-4,10) {\large $P^2$};
\foreach \Point in {(0,0),(2,0),(4,0),(6,0),(8,0),(10,0)}{
\draw[line width=0.2mm] \Point --++ (0,9);
\draw[line width=0.2mm] \Point --++ (0,-1);
\filldraw[black] \Point circle (0.1cm);
\filldraw[black] \Point --++ (0,2) circle (0.1cm);
\filldraw[black] \Point --++ (0,4) circle (0.1cm);
\filldraw[black] \Point --++ (0,6) circle (0.1cm);
\filldraw[black] \Point --++ (0,8) circle (0.1cm);
}
\foreach \Point in {(0,-1.8),(2,-1.8),(4,-1.8),(6,-1.8),(8,-1.8),(10,-1.8),
(0,9.8),(2,9.8),(4,9.8),(6,9.8),(8,9.8),(10,9.8)}{
\node [rotate=90] at \Point {\Large{$\ldots$}};
}
\foreach \Point in {(-1.8,0),(-1.8,2),(-1.8,4),(-1.8,6),(-1.8,8),
(11.8,0),(11.8,2),(11.8,4),(11.8,6),(11.8,8)}{
\node at \Point {\LARGE{$\ldots$}};
}
\foreach \Point in {(0,0),(0,2),(0,4),(0,6),(0,8)}{
\draw[line width=0.2mm] \Point --++ (11,0);
\draw[line width=0.2mm] \Point --++ (-1,0);
}
\end{tikzpicture}
	\caption{For the square grid, $\wcop(P^2)=\infty$ and $\scop(P^2)$ is unknown.}
	\label{fig:square}
\end{figure}
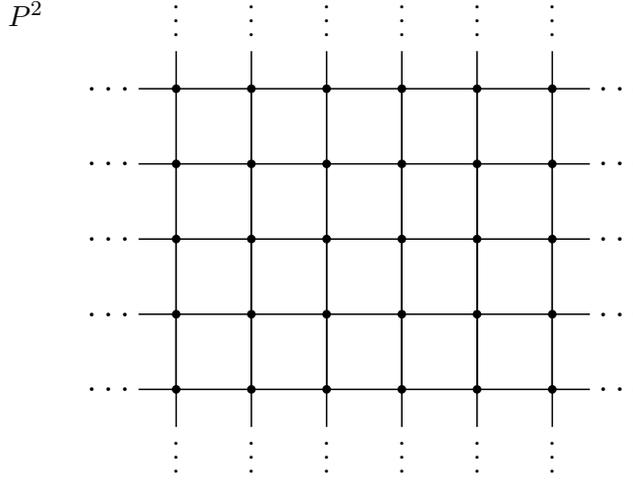
\subsection*{Cop numbers as large scale invariants}
The weak and strong cop numbers behave well under  quasi-retractions~\cite{Alonso}.
The notion of quasi-retraction is a generalization of the classical notion of graph retraction. A quasi-retraction does not necessarily target a subgraph. Briefly, a quasi-retraction from a graph $\Gamma$ to a graph $\Delta$ is a pair $(f,g)$ where $f\colon \Gamma\to \Delta$ and $g\colon\Delta\to\Gamma$ are Lipschitz maps (with respect to the path distances) between the sets of vertices such that $g\circ f$ is uniformly closed to the identity function on the vertex set of $\Delta$. In Section~\ref{sec:QuasiRetractions}, we provide a precise definition of quasi-retraction and prove the following result.
\begin{thmx}[Theorem~\ref{corocopN1}]\label{thm:THEO1}
Let $\Gamma$ and $\Delta$ be connected graphs. If $\Delta$ is a quasi-retract of $\Gamma$, then
\[\wcop(\Delta) \leq \wcop(\Gamma) \quad \text{and} \quad \scop(\Delta) \leq \scop(\Gamma).\]
\end{thmx}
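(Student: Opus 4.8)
The plan is to use a \emph{simulation} argument: fix a quasi-retraction and convert a winning $n$-cop strategy on $\Gamma$ into a winning $n$-cop strategy on $\Delta$ by having the $\Delta$-cops secretly run a game on $\Gamma$, transporting the robber's position from $\Delta$ to $\Gamma$ via $g$ and transporting the resulting cop positions from $\Gamma$ back to $\Delta$ via $f$. It suffices to show that if $\Gamma$ is $n$-strong cop win then so is $\Delta$, and likewise for weak cop win; I will carry this out for the strong cop number, the weak case being identical and slightly easier since there the reach is chosen once and for all.

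First I would fix notation and constants. Write the quasi-retraction as Lipschitz maps $f\colon V(\Gamma)\to V(\Delta)$ and $g\colon V(\Delta)\to V(\Gamma)$ with $\dist_\Delta\bigl(f(g(w)),w\bigr)\le C$ for all $w\in V(\Delta)$, noting that we may take the Lipschitz constants $L_f, L_g$ and $C$ to be positive integers. Assume $\Gamma$ is $n$-strong cop win and let $\sigma_\Gamma$ be a cop speed witnessing this. On $\Delta$ I would declare the cop speed to be $\sigma_\Delta := L_f\sigma_\Gamma$. Given any robber speed $\psi_\Delta$, put $\psi_\Gamma := L_g\psi_\Delta$, let $\rho_\Gamma$ be a reach supplied by the $\Gamma$-strategy for robber speed $\psi_\Gamma$, and declare the $\Delta$-reach to be $\rho_\Delta := C + L_f(\rho_\Gamma + L_g)$. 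Finally, given a radius $R_\Delta$ and a basepoint $v_\Delta$, put $R_\Gamma := L_gR_\Delta$ and $v_\Gamma := g(v_\Delta)$, and fix a cop strategy $\mathcal S$ witnessing that $\Gamma$ is $\mathsf{CopWin}(n,\sigma_\Gamma,\rho_\Gamma,\psi_\Gamma,R_\Gamma)$. These dependencies respect the quantifier order in the definition of strong cop win: $\sigma_\Delta$ is chosen from $\sigma_\Gamma$ alone, while $\rho_\Delta$ is permitted to depend on $\psi_\Delta$.

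Next I would describe and verify the simulated $\Delta$-strategy. The $\Delta$-cops maintain virtual cops on $\Gamma$ governed by $\mathcal S$: whenever the real robber is at $r_k$ they feed the position $g(r_k)$ to $\mathcal S$ as the virtual robber's position, and whenever $\mathcal S$ sends a virtual cop to $\tilde c^{\,i}_k\in V(\Gamma)$ the real cop $i$ moves to $f(\tilde c^{\,i}_k)$. Three points need checking. First, the $\Delta$-cop moves are legal: $\dist_\Gamma(\tilde c^{\,i}_{k-1},\tilde c^{\,i}_k)\le\sigma_\Gamma$ and $f$ is $L_f$-Lipschitz, so the real move has length $\le L_f\sigma_\Gamma=\sigma_\Delta$. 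Second, the virtual robber trajectory is a legal play on $\Gamma$: when the real robber slides from $r_{k-1}$ to $r_k$ along a path $P$ in $\Delta$ of length $\le\psi_\Delta$ all of whose vertices are at distance $>\rho_\Delta$ from every $c^{\,i}_k$, replace each edge of $P$ by a $\Gamma$-geodesic between the $g$-images of its endpoints; this produces a path from $g(r_{k-1})$ to $g(r_k)$ in $\Gamma$ of length $\le L_g\psi_\Delta=\psi_\Gamma$, and a short estimate (combining the Lipschitz bound for $f$ with the closeness of $f\circ g$ to the identity, and using precisely that $\rho_\Delta = C + L_f(\rho_\Gamma+L_g)$) shows that every vertex of this path is at distance $>\rho_\Gamma$ from every virtual cop $\tilde c^{\,i}_k$. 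Hence the sequence $\bigl(g(r_k)\bigr)_k$ is a legal play against $\mathcal S$, so $\mathcal S$ wins it. Third, the outcome transfers: if a virtual capture $\dist_\Gamma(g(r_{k-1}),\tilde c^{\,i}_k)\le\rho_\Gamma$ occurs, then $\dist_\Delta(r_{k-1},c^{\,i}_k)\le C+L_f\rho_\Gamma\le\rho_\Delta$, so the real robber is captured in the same stage; and if eventually $\dist_\Gamma(g(v_\Delta),g(r_k))>R_\Gamma=L_gR_\Delta$, then eventually $\dist_\Delta(v_\Delta,r_k)>R_\Delta$ by the Lipschitz bound for $g$. Either way the $\Delta$-cops protect the $R_\Delta$-ball about $v_\Delta$, so $\Delta$ is $\mathsf{CopWin}(n,\sigma_\Delta,\rho_\Delta,\psi_\Delta,R_\Delta)$; since $\psi_\Delta$, $R_\Delta$ and $v_\Delta$ were arbitrary, $\Delta$ is $n$-strong cop win, and the weak cop number statement follows from the same argument with $\sigma_\Delta$ and $\rho_\Delta$ both fixed at the outset from the (now fixed) $\sigma_\Gamma$ and $\rho_\Gamma$.

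I expect the main obstacle to be the second point above: converting a robber escape path in $\Delta$ into an honest escape path in $\Gamma$ that stays more than $\rho_\Gamma$ away from the virtual cops, which is exactly what forces the inflated reach $\rho_\Delta = C + L_f(\rho_\Gamma + L_g)$; one then has to confirm, as in the third point, that this larger $\rho_\Delta$ still makes virtual captures transfer to real captures. Everything else is routine bookkeeping with the Lipschitz constants and the quantifier order.
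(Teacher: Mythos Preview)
Your proposal is correct and follows essentially the same simulation argument as the paper's proof (Proposition~\ref{lemaqrCN}): transport the robber's play from $\Delta$ to $\Gamma$ along the section, run the winning $\Gamma$-strategy, and transport the cop moves back to $\Delta$ along the retraction, with the three verifications (legality of cop moves, legality of the pushed robber path, and transfer of capture/escape) carried out identically. The only cosmetic differences are that you swap the names of $f$ and $g$ relative to the paper, absorb the additive Lipschitz constant into the multiplicative one (legitimate for graph maps since distances between distinct vertices are $\ge 1$), and treat the strong case first whereas the paper spells out the weak case.
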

For graphs $\Gamma$ and $\Delta$, let
$\sq{\Gamma}{\Delta}$, $\xsq{\Gamma}{\Delta}$, $ \lx{\Gamma}{\Delta}$, $\cir{\Gamma}{\Delta}{y} $ denote their Cartesian product, strong product, lexicographic product, and rooted product (over a vertex $y$ of $\Delta$), respectively. In all cases, $\Gamma$ is a graph retract of the any of these products, hence:
\begin{corx}\label{CnProd}
Let $\Gamma$ and $\Delta$ be connected graphs, and let $y\in V(\Delta)$. Suppose
\[\Lambda\in\{\sq{\Gamma}{\Delta}, \xsq{\Gamma}{\Delta}, \lx{\Gamma}{\Delta}, \cir{\Gamma}{\Delta}{y}\}.\] Then
\[\wcop(\Gamma) \leq \wcop(\Lambda) \quad \text{and} \quad \scop(\Gamma) \leq \scop(\Lambda).\]
\end{corx}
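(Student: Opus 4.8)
The plan is to obtain Corollary~\ref{CnProd} as an immediate consequence of Theorem~\ref{thm:THEO1}: I would show that in each of the four cases $\Gamma$ is a quasi-retract of $\Lambda$, and then quote that theorem. To apply it, it suffices to produce maps $\pi\colon V(\Lambda)\to V(\Gamma)$ and $\iota\colon V(\Gamma)\to V(\Lambda)$ that are Lipschitz for the path distances and satisfy $\pi\circ\iota=\mathrm{id}_{V(\Gamma)}$; such a pair is a quasi-retraction --- indeed $\pi\circ\iota$ is literally the identity of $V(\Gamma)$, not merely close to it --- witnessing $\Gamma$ as a quasi-retract of $\Lambda$. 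Since each of the four products has vertex set $V(\Gamma)\times V(\Delta)$, in every case I would take $\pi(\gamma,\delta)=\gamma$ to be the projection onto the first coordinate and $\iota(\gamma)=(\gamma,y_0)$ the inclusion of the slice over a fixed vertex $y_0\in V(\Delta)$ --- where $y_0=y$ for the rooted product $\cir{\Gamma}{\Delta}{y}$ and $y_0$ is arbitrary for the other three. Then $\pi\circ\iota=\mathrm{id}$ automatically, so everything reduces to checking that $\pi$ and $\iota$ are $1$-Lipschitz.

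The key step is then a routine unwinding of the edge relations of the four products. In $\sq{\Gamma}{\Delta}$ adjacent vertices agree in one coordinate and are adjacent in the other; in $\xsq{\Gamma}{\Delta}$ one additionally allows both coordinates to be adjacent at once; in $\lx{\Gamma}{\Delta}$ one has $(\gamma,\delta)\sim(\gamma',\delta')$ iff $\gamma\sim_\Gamma\gamma'$, or $\gamma=\gamma'$ and $\delta\sim_\Delta\delta'$; and in $\cir{\Gamma}{\Delta}{y}$ one has $(\gamma,\delta)\sim(\gamma',\delta')$ iff $\gamma=\gamma'$ and $\delta\sim_\Delta\delta'$, or $\delta=\delta'=y$ and $\gamma\sim_\Gamma\gamma'$. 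In all four descriptions, adjacency of $(\gamma,\delta)$ and $(\gamma',\delta')$ forces $\gamma=\gamma'$ or $\gamma\sim_\Gamma\gamma'$, hence $\dist_\Gamma\bigl(\pi(\gamma,\delta),\pi(\gamma',\delta')\bigr)\leq 1$, so $\pi$ is $1$-Lipschitz. Symmetrically, in each of the four products $\gamma\sim_\Gamma\gamma'$ implies $(\gamma,y_0)\sim(\gamma',y_0)$ --- for $\cir{\Gamma}{\Delta}{y}$ this is where $y_0=y$ is used --- so $\iota$ is $1$-Lipschitz. This yields a quasi-retraction of $\Lambda$ onto $\Gamma$, and Theorem~\ref{thm:THEO1} then gives $\wcop(\Gamma)\leq\wcop(\Lambda)$ and $\scop(\Gamma)\leq\scop(\Lambda)$.

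I do not expect a genuine obstacle; the content is entirely in Theorem~\ref{thm:THEO1}. The one point requiring a word of care is that, although each slice $\iota(V(\Gamma))$ is a graph retract of $\Lambda$ in the customary sense, the associated retraction $(\gamma,\delta)\mapsto(\gamma,y_0)$ collapses every edge lying inside a $\Delta$-fibre and so is not literally a homomorphism of simple graphs; what the argument uses, and all that is needed, is that this map and its section are Lipschitz on vertex sets. I would also note in passing that $\Lambda$ is a connected graph whenever $\Gamma$ and $\Delta$ are --- so that $\wcop(\Lambda)$ and $\scop(\Lambda)$ are defined --- which is clear for the Cartesian, strong and lexicographic products and, for the rooted product, follows because each $\Delta$-fibre is connected and the fibres are attached along a connected copy of $\Gamma$.
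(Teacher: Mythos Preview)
Your proposal is correct and follows essentially the same route as the paper: the corollary is deduced from Theorem~\ref{thm:THEO1} by exhibiting the projection $(\gamma,\delta)\mapsto\gamma$ and the slice inclusion $\gamma\mapsto(\gamma,y_0)$ as $(1,0)$-Lipschitz maps with $\pi\circ\iota=\mathrm{id}$, with the paper spelling out only the Cartesian case and declaring the others analogous. Your write-up is in fact more thorough, since you verify the Lipschitz condition against all four edge relations and flag the subtlety that the projection collapses $\Delta$-fibre edges and so is a quasi-retraction rather than a graph homomorphism in the strict sense.
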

The main consequence of Theorem~\ref{thm:THEO1} is the quasi-isometric invariance of the weak and strong cop numbers. The connected graphs $\Gamma$ and $\Delta$ are \emph{quasi-isometric} if there is a pair of maps $f\colon\Delta\to\Gamma$ and $g\colon\Gamma\to \Delta$ such that $(f,g)$ and $(g,f)$ are quasi-retractions from $\Gamma$ to $\Delta$ and from $\Delta$ to $\Gamma$, respectively.
\begin{corx}
If $\Gamma$ and $\Delta$ are connected quasi-isometric graphs, then
\[\wcop(\Delta) = \wcop(\Gamma) \quad \text{and} \quad \scop(\Delta) = \scop(\Gamma).\]
\end{corx}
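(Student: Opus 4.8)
The plan is to deduce this immediately from Theorem~\ref{thm:THEO1} together with the definition of quasi-isometry given just above the statement. First I would unpack the definition: if $\Gamma$ and $\Delta$ are quasi-isometric, then by hypothesis there is a pair of maps $f\colon\Delta\to\Gamma$ and $g\colon\Gamma\to\Delta$ such that $(f,g)$ is a quasi-retraction from $\Gamma$ to $\Delta$ and $(g,f)$ is a quasi-retraction from $\Delta$ to $\Gamma$. In particular $\Delta$ is a quasi-retract of $\Gamma$, and also $\Gamma$ is a quasi-retract of $\Delta$.

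Next I would apply Theorem~\ref{thm:THEO1} twice. Applying it to the quasi-retraction witnessing that $\Delta$ is a quasi-retract of $\Gamma$ yields $\wcop(\Delta)\leq\wcop(\Gamma)$ and $\scop(\Delta)\leq\scop(\Gamma)$. Applying it to the quasi-retraction witnessing that $\Gamma$ is a quasi-retract of $\Delta$ yields the reverse inequalities $\wcop(\Gamma)\leq\wcop(\Delta)$ and $\scop(\Gamma)\leq\scop(\Delta)$. Combining the two pairs of inequalities gives $\wcop(\Delta)=\wcop(\Gamma)$ and $\scop(\Delta)=\scop(\Gamma)$, which is the claim. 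These comparisons are valid in $\ZZ_+\cup\{\infty\}$, so the argument also handles the case where one (hence both) of the invariants is infinite.

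There is essentially no obstacle here: the corollary is a formal consequence of the fact, built into the definition of quasi-isometry, that a quasi-isometry decomposes into two quasi-retractions pointing in opposite directions, so all the real work has already been done in establishing Theorem~\ref{thm:THEO1}. The only point worth a sentence of care is to confirm that the ordered pairs are matched correctly to the two directions of Theorem~\ref{thm:THEO1}, i.e.\ that $(f,g)$ with $f\colon\Delta\to\Gamma$ indeed exhibits $\Delta$ as the target of a quasi-retraction from $\Gamma$, so that the inequality runs $\wcop(\Delta)\leq\wcop(\Gamma)$ and not the other way; this is exactly how ``quasi-retract'' is set up in Section~\ref{sec:QuasiRetractions}, so no further verification is needed.
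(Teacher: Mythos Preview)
Your proposal is correct and matches the paper's approach exactly: the corollary is stated without proof because it follows immediately from Theorem~\ref{thm:THEO1} applied in both directions, using the definition of quasi-isometry as a pair of mutually inverse quasi-retractions.
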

There are similar results to this corollary for other combinatorial games. For example, it is shown in~\cite{firefighter} that for a variation of Hartnell’s firefighter game~\cite{Hartnell}, the existence of a winning strategy for the firefighters is a quasi-isometric invariant. For another variation of the firefighter game, a similar result was obtained in~\cite{MaPr23}.

\subsection*{Weak cop numbers of non-amenable one-ended graphs}
Definitions of the terminology used in the following result, as well as its proof, are the contents of Section~\ref{sectionamenable}. Let us briefly describe some of the terminology in the theorem.
A graph is \emph{vertex transitive} $\Gamma$ if for any two vertices $u$ and $v$, there is some automorphism of $\Gamma$ that maps $u$ to $v$.  A connected graph $\Gamma$ is \emph{one-ended} if for any finite subset of vertices the induced subgraph $\Gamma\setminus K$ has only one unbounded connected component. An example of a one-ended graph is the infinite square grid.
Intuitively, a connected graph $\Gamma$ is \emph{non-amenable} if it does not have large bottlenecks.
More formally, for a subset of vertices $K$ of $\Gamma$, let $\partial K$ be the set of edges of $\Gamma$ with one endpoint in $K$ and the other endpoint not in $K$. The graph $\Gamma$ is \emph{non-amenable} if its Cheeger constant $h(\Gamma)$ is nonzero, that is \[0<h(\Gamma)=\inf\left\{ \frac{|\partial K|}{|K|} \mid {K\subset V(\Gamma),\ |K|<\infty} \right\} \]

\begin{thmx}[Theorem~\ref{1eN-A}]
If $\Gamma$ is a connected, one-ended, non-amenable, locally finite, vertex transitive graph, then $\wcop(\Gamma)=\infty$.
\end{thmx}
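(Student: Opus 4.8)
The plan is to fix arbitrary $n,\sigma,\rho$ and produce, for a suitable speed $\psi$ and radius $R$ depending only on $n,\sigma,\rho$ and the geometry of $\Gamma$, a strategy by which the robber avoids capture and returns to $B_\Gamma(v,R)$ infinitely often; since $n$ is arbitrary this shows $\Gamma$ is $n$-weak cop win for no $n$, i.e.\ $\wcop(\Gamma)=\infty$. Vertex transitivity lets us fix one basepoint $v$ and one growth function $\beta(t):=|B_\Gamma(v,t)|$; write $q$ for the common degree and $h=h(\Gamma)>0$ for the Cheeger constant. Any cop configuration thickened by a radius $m$ occupies at most $n\beta(m)$ vertices.

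The geometric heart is that bounded sets are negligible in a one-ended non-amenable graph. First I would record that if $S$ is finite then every finite component of $\Gamma\setminus S$ has order at most $q|S|/h$ — apply the Cheeger inequality to the union of all finite components, whose $\Gamma$-edge boundary lands in $S$ — and hence, using one-endedness, that $\Gamma\setminus S$ has a \emph{unique} infinite component $\Gamma^\infty_S$ with $|V(\Gamma)\setminus\Gamma^\infty_S|\le |S|+q|S|/h$. The key lemma, and the step I expect to be the main obstacle, is a \emph{bounded detour} estimate: for each $C$ there is $L=L(C)$ so that for every finite $S$ with $|S|\le C$ and all $x,y\in\Gamma^\infty_S$,
\[\dist_{\Gamma^\infty_S}(x,y)\le\dist_\Gamma(x,y)+L.\]
I would prove this by cutting a $\Gamma$-geodesic from $x$ to $y$ into its at-most-$(|S|+q|S|/h)$ maximal excursions into the bad set $V(\Gamma)\setminus\Gamma^\infty_S$ and rerouting each excursion inside $\Gamma^\infty_S$; this reduces matters to the local claim that two vertices of $\Gamma^\infty_S$ at bounded $\Gamma$-distance lie at bounded $\Gamma^\infty_S$-distance. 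That local claim should hold because a bounded set can only ``pocket off'' a bounded region, so a rerouting confined to a ball of radius depending only on $C$ exists; translating to a fixed basepoint and using local finiteness leaves only finitely many configurations, which yields a uniform $L(C)$. Making the radius bookkeeping in this last step genuinely close up — rather than merely reduce the claim to itself — is the delicate point, and it is here that non-amenability (controlling the pocketed region), not just one-endedness, is used.

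Granting the lemma, the robber plays ``walk toward $v$ with a safety buffer.'' Set $M:=\rho+2\sigma$; let $C:=n\beta(M+\sigma)$, $C'':=C+qC/h$, and choose $R:=C''$ and $\psi:=4C''+2L(C)+2$. At turn $k$ the cops have moved to $c^1_k,\dots,c^n_k$; put $S_k:=\bigcup_iB_\Gamma(c^i_k,M)$ and $S^+_k:=\bigcup_iB_\Gamma(c^i_k,M+\sigma)$. The robber maintains the invariant that $r_k\in\Gamma^\infty_{S^+_k}$: since a vertex at distance $>M+\sigma$ from every $c^i_k$ is at distance $>M\ge\rho$ from every $c^i_{k+1}$, this guarantees he is never captured and that $\Gamma^\infty_{S^+_k}\subseteq\Gamma^\infty_{S_{k+1}}$, so he enters turn $k+1$ inside $\Gamma^\infty_{S_{k+1}}$ (and $r_0$ is chosen in the infinite component of $\Gamma$ minus the $(M+2\sigma)$-neighbourhood of the cops, which lies in $\Gamma^\infty_{S^+_1}$). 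On turn $k$, with $r_{k-1}\in\Gamma^\infty_{S_k}$, let $v'$ be a vertex of $\Gamma^\infty_{S^+_k}$ nearest to $v$ in $\Gamma$; since the bad set for $S^+_k$ has fewer than $|B_\Gamma(v,C'')|$ vertices, $\dist_\Gamma(v',v)\le C''$. The robber travels along a $\Gamma^\infty_{S_k}$-geodesic toward $v'$ for $\min\{\psi-C''-L(C),\,\dist_{\Gamma^\infty_{S_k}}(r_{k-1},v')\}$ steps and, if he has not reached $v'$, uses at most $C''+L(C)$ further steps (enough, by the lemma, to step to a nearest vertex of $\Gamma^\infty_{S^+_k}$), ending at $r_k$ within budget $\psi$; every vertex on this path lies in $\Gamma^\infty_{S_k}$, hence at distance $>M\ge\rho$ from all cops, so the move is legal. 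Bounding $\dist_{\Gamma^\infty_{S_k}}(r_{k-1},v')\le\dist_\Gamma(r_{k-1},v)+C''+L(C)$ by the lemma, a one-line triangle-inequality computation gives $\dist_\Gamma(r_k,v)\le\max\{C'',\dist_\Gamma(r_{k-1},v)-2\}$. Hence $\dist_\Gamma(r_k,v)$ strictly decreases until it drops to $C''=R$ and then stays $\le R$ forever, so the robber is uncaptured and in $B_\Gamma(v,R)$ for all large $k$; the cops do not win $\mathsf{CopWin}(n,\sigma,\rho,\psi,R)$, and since $n,\sigma,\rho$ were arbitrary, $\wcop(\Gamma)=\infty$.

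A word on the roles of the hypotheses: local finiteness keeps balls finite (so $\beta(t)<\infty$ and the compactness step has finitely many cases), vertex transitivity fixes the basepoint and growth function, one-endedness supplies the uniqueness of $\Gamma^\infty_S$ used in the nested inclusions, and non-amenability bounds the finite components, hence the bad set and the constant $L$. None can be dropped — e.g.\ the regular tree $T_3$ is non-amenable, locally finite and transitive but not one-ended, and has $\wcop(T_3)=1$.
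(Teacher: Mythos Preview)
Your plan is sound and would give a correct proof, but it differs from the paper's in two genuine respects.

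\textbf{The detour estimate.} You aim for an \emph{additive} bound $\dist_{\Gamma^\infty_S}(x,y)\le\dist_\Gamma(x,y)+L(C)$, obtained by bounding the number of excursions of a $\Gamma$-geodesic through the bad set (at most $C''$ of them, since a geodesic visits no vertex twice) and rerouting each. The paper instead proves a \emph{multiplicative} bound $\dist_{\Gamma-\Delta}(x,y)\le L_{m,n}\dist_\Gamma(x,y)$, where $m$ bounds $|\Delta|$ and $n$ bounds its number of components; the constants $L_{m,n}$ are built by induction on $n$ via a Rips-type graph on the components $\Delta_1,\dots,\Delta_n$, which lets one either treat far-apart clusters independently or, when they cluster, reduce to finitely many configurations by vertex transitivity. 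Your ``local claim'' is precisely this multiplicative bound applied to the endpoints of a single excursion (which lie at $\Gamma$-distance $\le C''+1$), so your additive lemma in fact sits on top of the paper's lemma rather than bypassing it. You are right to flag this step as the delicate one: the finitely-many-configurations argument does close up, but it needs the inductive organisation the paper supplies; a naive ``translate to a basepoint'' alone does not control how many $\Delta_i$'s interfere simultaneously.

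\textbf{The robber's strategy.} Here the two proofs diverge completely. The paper's robber never tries to approach $v$; instead one fixes, once and for all, $n+1$ vertices $s_1,\dots,s_{n+1}$ pairwise far apart inside $B_R(v)$, and by pigeonhole at every stage some $s_j$ is at distance $>D+\sigma$ from all $n$ cops; the robber simply hops among these safe points, using the multiplicative detour bound to check the hop has length $\le\psi$. Your robber is dynamic: it walks a $\Gamma^\infty_{S_k}$-geodesic toward a nearest point $v'$ of $\Gamma^\infty_{S_k^+}$ to $v$, then corrects into $\Gamma^\infty_{S_k^+}$, and you track $\dist_\Gamma(r_k,v)$ as a Lyapunov function that drops by $2$ per turn. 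Your bookkeeping (with $\psi=4C''+2L(C)+2$) does check out. The paper's pigeonhole strategy is shorter and only needs the weaker multiplicative bound; your strategy is more explicit about \emph{how} the robber stays near $v$ and would adapt more readily to quantitative refinements, at the cost of the stronger additive lemma and a longer invariant to maintain.
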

While this result looks technical due to the number of hypotheses, the class of graphs where the theorem applies is large as the following corollaries illustrate.
All graphs arising as regular tilings of the hyperbolic plane are quasi-isometric, one-ended and non-amenable. In particular, all of them have the same weak and strong cop number.
\begin{corx}
Every graph arising as a regular tiling of the hyperbolic plane has infinite weak cop number.
\end{corx}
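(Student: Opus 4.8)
The plan is to show that the $1$-skeleton $H$ of a regular tiling $\{p,q\}$ of the hyperbolic plane $\mathbb{H}^2$ satisfies all the hypotheses of Theorem~\ref{1eN-A} --- connected, one-ended, non-amenable, locally finite, vertex transitive --- and then apply that theorem. Recall that the tiling $\{p,q\}$ of $\mathbb{H}^2$ exists exactly when $\tfrac{1}{p}+\tfrac{1}{q}<\tfrac{1}{2}$, equivalently $(p-2)(q-2)>4$.

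Connectedness and local finiteness are immediate, since $H$ is connected and every vertex has degree $q$. Vertex transitivity holds because the symmetry group of the tiling (a cocompact hyperbolic triangle reflection group) acts transitively on the flags of the tiling, in particular on $V(H)$. For one-endedness I would use that an orbit map of the symmetry group is a quasi-isometry $H\to\mathbb{H}^2$, that the number of ends of a connected locally finite graph is a classical quasi-isometry invariant, and that $\mathbb{H}^2\cong\mathbb{R}^2$ has one end; alternatively, given a finite $K\subseteq V(H)$ one can fill in the bounded complementary regions of the union of tiles meeting $K$ to obtain a compact disk $D\subseteq\mathbb{H}^2$ and check by hand that all vertices far enough from $D$ lie in a single component of $H\setminus K$.

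The substantive step, and the one I expect to be the main obstacle, is \emph{non-amenability}: finding $c>0$ with $|\partial K|\ge c\,|K|$ for every finite $K\subseteq V(H)$. I would argue by combinatorial Gauss--Bonnet. Because distinct components of the subgraph induced on $K$ have no edges between them, it suffices to treat connected $K$; and since filling in the bounded complementary regions replaces $K$ by a larger set $\bar K$ with $|\partial\bar K|\le|\partial K|$, we may assume the subcomplex $X_K$ formed by $K$, the edges of the tiling with both ends in $K$, and the tiles all of whose boundary edges lie in $K$ is a topological disk. Every interior vertex of $X_K$ then has combinatorial curvature $\kappa_0:=1-\tfrac{q}{2}+\tfrac{q}{p}$, which is \emph{strictly negative} --- this is precisely the inequality $(p-2)(q-2)>4$ --- while each of the at most $|\partial K|$ boundary vertices has curvature bounded in absolute value by a constant $C=C(p,q)$. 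Combinatorial Gauss--Bonnet gives $\sum_v\kappa(v)=\chi(X_K)=1$, whence $1\le\kappa_0\bigl(|\bar K|-|\partial K|\bigr)+C\,|\partial K|$; rearranging and using $\kappa_0<0$ together with $|\bar K|\ge|K|$ yields $|\partial K|\ge\tfrac{-\kappa_0}{C-\kappa_0}\,|K|$. The delicate bookkeeping lies in the filling-in step and in nailing down the boundary correction term in the Gauss--Bonnet formula; all constants depend only on $p$ and $q$. (One could instead derive non-amenability from the linear isoperimetric inequality of $\mathbb{H}^2$ together with the quasi-isometry invariance of non-amenability among bounded-degree graphs.)

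With all hypotheses verified, Theorem~\ref{1eN-A} gives $\wcop(H)=\infty$, as claimed.
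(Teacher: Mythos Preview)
Your approach is exactly the paper's: verify that a regular hyperbolic tiling is connected, locally finite, vertex transitive, one-ended, and non-amenable, then invoke Theorem~\ref{1eN-A}. The paper simply asserts these properties in the sentence preceding the corollary without supplying the arguments you sketch; your Gauss--Bonnet outline and the quasi-isometry alternative are both standard ways to justify the non-amenability claim that the paper leaves implicit.
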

 
A standard source of vertex transitive locally finite graphs are Cayley graphs of finitely generated groups. Specifically, if $G$ is a group generated by a finite set $S$, the Cayley graph $\Gamma(G,S)$ is the graph with vertex set $G$ and and edge set all 2-subsets of the form $\{g, gs\}$ with $g\in G$ and $s\in S$. It is well known that $\Gamma(G,S)$ is a vertex transitive connected graph, and any two Cayley graphs of a finitely generated group with respect to finite generating sets are quasi-isometric~\cite{brid}. In particular, the quasi-isometric invariance of the weak and strong cop number implies that they induce invariants of finitely generated groups. Specifically for a finitely generated group $G$, let
\[ \wcop(G) = \wcop(\Gamma(G,S)) \qquad \scop(G) = \scop(\Gamma(G,S))\]
where $S$ is any finite generating set of $G$.
In the class of locally finite graphs, the properties of being one-ended, being hyperbolic and being non-amenable are quasi-isometric invariants of graphs, see~\cite{brid} and~\cite[Section 18]{DK18} for non-amenability. A finitely generated group is said to be hyperbolic, one ended or non-amenable depending on its Cayley graphs. One-ended hyperbolic groups are non-amenable~\cite{DK18}.
\begin{corx} \label{corx:HyperbolicTilingWeakCop}
Let $G$ is a finitely generated one-ended hyperbolic group, then $\wcop(G)=\infty$ and $\scop(G)=1$.
\end{corx}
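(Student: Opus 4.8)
The plan is to reduce the statement to the two main theorems, Theorem~\ref{thmx:hyperbolic} (hyperbolic graphs have strong cop number one) and Theorem~\ref{1eN-A} (connected, one-ended, non-amenable, locally finite, vertex-transitive graphs have infinite weak cop number), applied to a Cayley graph of $G$. Fix a finite generating set $S$ of $G$ and put $\Gamma=\Gamma(G,S)$. First I would note that $\wcop(G)$ and $\scop(G)$ are well defined: any two finite generating sets of $G$ give quasi-isometric Cayley graphs, and by the quasi-isometric invariance of the weak and strong cop numbers (a consequence of Theorem~\ref{thm:THEO1}) the values $\wcop(\Gamma(G,S))$ and $\scop(\Gamma(G,S))$ do not depend on $S$. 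Hence it suffices to compute these two quantities for this single graph $\Gamma$.

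Next I would record the structural properties of $\Gamma$. It is connected and, since $S$ is finite, locally finite; it is vertex transitive because left multiplication by elements of $G$ acts on $\Gamma$ by graph automorphisms transitively on vertices. Since $G$ is hyperbolic, $\Gamma$ is a hyperbolic graph, and since $G$ is one-ended, $\Gamma$ is one-ended; both assertions are instances of the standard dictionary between a finitely generated group and its Cayley graphs, using that hyperbolicity and one-endedness are quasi-isometric invariants of locally finite graphs. Finally, a one-ended hyperbolic group is non-amenable, so $\Gamma$ is a non-amenable graph. (Note these hypotheses are consistent: a one-ended group is infinite.)

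With this in hand the conclusion is immediate. Applying Theorem~\ref{thmx:hyperbolic} to the hyperbolic graph $\Gamma$ gives $\scop(\Gamma)=1$, that is $\scop(G)=1$. Applying Theorem~\ref{1eN-A} to $\Gamma$, which is connected, one-ended, non-amenable, locally finite and vertex transitive, gives $\wcop(\Gamma)=\infty$, that is $\wcop(G)=\infty$; this is of course consistent with the general inequality $\scop(\Gamma)\le\wcop(\Gamma)$. There is no genuine obstacle in this proof: all the work is already contained in Theorems~\ref{thmx:hyperbolic} and~\ref{1eN-A} and in the quasi-isometry machinery of Section~\ref{sec:QuasiRetractions}. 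The only point requiring a word of care is the well-definedness of the group invariants, which is precisely the quasi-isometric invariance statement applied to the fact that Cayley graphs with respect to different finite generating sets are quasi-isometric.
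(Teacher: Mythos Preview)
Your proposal is correct and is exactly the argument the paper has in mind: reduce to a Cayley graph $\Gamma=\Gamma(G,S)$, use that $\Gamma$ is connected, locally finite, vertex transitive, hyperbolic, one-ended, and (since one-ended hyperbolic groups are non-amenable) non-amenable, and then apply Theorem~\ref{thmx:hyperbolic} for $\scop$ and Theorem~\ref{1eN-A} for $\wcop$, with well-definedness of the group invariants coming from quasi-isometric invariance. There is nothing to add.
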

In the sense of some probabilistic constructions introduced by Gromov, most finitely generated groups are hyperbolic~\cite{Gromov93} and one-ended~\cite{DGP11}. It is natural to ask whether the weak and strong cop numbers are meaningful invariants of finitely generated groups:
\begin{quex}
Is there a finitely generated group $G$ such that $1<\scop(G)<\infty$ or $1<\wcop(G)<\infty$?
\end{quex}

It is a remarkable result of Eskin, Fisher and White~\cite{EsFiWh12} (based on graphs discovered by Diestel and Leader~\cite{DiLe01})  that there exist  connected, locally finite, vertex transitive graphs that are not quasi-isometric to the Cayley graph of a finitely generated group.  

\begin{quex}
Is there a connected vertex transitive locally finite graph $\Gamma$ such that $1<\scop(\Gamma)<\infty$ or $1<\wcop(\Gamma)<\infty$?
\end{quex}

\subsection*{Range of the weak and strong cop numbers.}
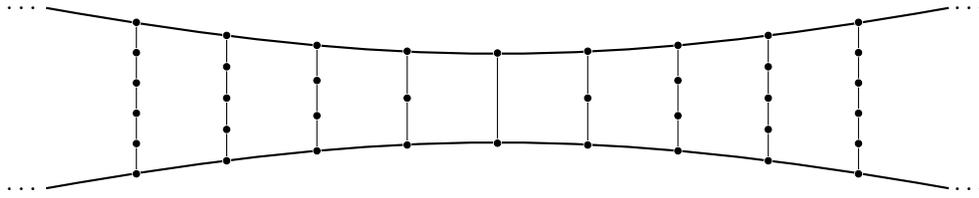
\begin{figure}
	\centering
\begin{tikzpicture}[scale=0.6]
\draw[black,thick] (-10,4) to [out=350,in=190] (10,4);
\draw[black,thick] (-10,0) to [out=10,in=170] (10,0);
\draw (0,3) -- (0,1);
\draw (-2,0.96)--(-2,3.04);
\draw (2,0.96)--(2,3.04);
\draw (-4,0.83)--(-4,3.17);
\draw (4,0.83)--(4,3.17);
\draw (-6,0.61)--(-6,3.39);
\draw (6,0.61)--(6,3.39);
\draw (-8,0.32)--(-8,3.68);
\draw (8,0.32)--(8,3.68);
\foreach \Point in {(0,3), (0,1), (-2,0.96),(-2,3.04),(2,0.96),(2,3.04),
(-4,0.83),(-4,3.17),(4,0.83),(4,3.17),
(-6,0.61),(-6,3.39),(6,0.61),(6,3.39),
(-8,0.32),(-8,3.68),(8,0.32),(8,3.68),(2,2), (-2,2),(4,1.61), (4,2.39), (-4,1.61) ,(-4,2.39),(-6,1.305), (-6,2), (-6,2.695),(6,1.305), (6,2), (6,2.695),(8,0.992), (8,1.664), (8,2.336), (8,3.008), (-8,0.992) ,(-8,1.664), (-8,2.336), (-8,3.008)
}{
\filldraw[white] \Point circle (2.5pt);
}
\foreach \Point in { (0,3), (0,1), (-2,0.96),(-2,3.04),(2,0.96),(2,3.04),
(-4,0.83),(-4,3.17),(4,0.83),(4,3.17),
(-6,0.61),(-6,3.39),(6,0.61),(6,3.39),
(-8,0.32),(-8,3.68),(8,0.32),(8,3.68),(2,2), (-2,2),(4,1.61), (4,2.39), (-4,1.61) ,(-4,2.39),(-6,1.305), (-6,2), (-6,2.695),(6,1.305), (6,2), (6,2.695),(8,0.992), (8,1.664), (8,2.336), (8,3.008), (-8,0.992) ,(-8,1.664), (-8,2.336), (-8,3.008)
}{
\filldraw \Point circle (2pt);
}
		\node at (-10.5,4) {$\dots$};
		\node at (10.5,4) {$\dots$};
		\node at (10.5,0) {$\dots$};
		\node at (-10.5,0) {$\dots$};
		\end{tikzpicture}
	\caption{The $\Theta_2$-extension of the infinite path $P$.}
	\label{fig:Theta2Z}
\end{figure}
\begin{figure}
	\centering
\begin{tikzpicture}[scale=0.6]
\draw (-1.6,3.2)--(-1.4,2.2)--(-1.8,1)--cycle;
\draw (-0.3,3.15)--(-0.5,0.93)--(0,2.1)--cycle;
\draw (-2.6,2.3)--(-3.07,1.1)--(-3,3.3)--cycle;
\draw (-4,2.45)--(-4.7,3.55)--(-4.7,1.1)--cycle;
\draw (-5.5,2.65)--(-6.4,3.87)--(-6.5,1.07)--cycle;
\draw (-7.3,2.88)--(-8.4,4.27)--(-8.8,1.01)--cycle;
\draw (1.6,2.04)--(1,3.18)--(0.8,0.8)--cycle;
\draw (3.2,2)--(2.4,3.27)--(2.2,0.63)--cycle;
\draw (5.6,2)--(4.2,3.425)--(3.8,0.43)--cycle;
\draw (8.2,2)--(6.8,3.7)--(5.8,0.17)--cycle;
\foreach \Point in {(-1.6,3.2),(-0.3,3.15),(-3,3.3),(-4.7,3.55),(-1.4,2.2), (0,2.1),(-2.6,2.3),(-4,2.45),(-1.8,1),(-0.5,0.93),(-3.07,1.1),(-4.7,1.1),(-5.5,2.65),(-6.4,3.87),(-6.5,1.07),(-7.3,2.88),(-8.4,4.27),(-8.8,1.01),(1.6,2.04),(1,3.18),(0.8,0.8),(3.2,2),(2.4,3.27),(2.2,0.63),(5.6,2),(4.2,3.425),(3.8,0.43),(8.2,2),(6.8,3.7),(5.8,0.17),(6.3,2),(-0.4,2),(-0.2,1.6),(-0.13,2.55),
(-2.75,1.9),(-3.04,2.2),(-2.75,2.7),(-4.23,2),(-4.45,1.6),(-4.17,2.73),(-4.38,3.04),(-4.7,2.65),(-4.7,2),(-6.1,3.45),(-5.9,3.2),(-5.72,2.95),(-5.79,2.2),(-6,1.85),(-6.22,1.5),(-6.425,3),(-6.45,2.4),(-6.47,1.9),(-8.1,3.9),(-7.9,3.65),(-7.7,3.4),(-7.5,3.155),(-7.6,2.5),(-7.9,2.125),(-8.2,1.75),(-8.5,1.4),(-8.7,1.8),(-8.63,2.35),(-8.57,2.9),(-8.5,3.5),(0.93,2.4),(0.87,1.6),(1.32,1.6),(1.1,1.25),(1.4,2.4),(1.2,2.8),(2.3,1.95),(2.35,2.55),(2.25,1.35),(2.47,1),(2.9,1.6),(2.7,1.3),(3,2.3),(2.83,2.6),(2.64,2.9),(4.14,3),(4.06,2.4),(3.95,1.5),(3.88,1),(4.5,3.1),(4.75,2.85),(5,2.6),(5.25,2.35),(5.155,1.6),(4.8,1.3),(4.45,1),(4.1,0.7),(6.3,1.9),(6.15,1.4),(5.98,0.8),(6.455,2.5),(6.6,3),(7.1,3.35),(7.5,2.85),(7.3,3.1),(7.7,2.6),(7.95,2.3),(6.25,0.5),(7,1.085),(6.625,0.7925),(7.8,1.7),(7.4,1.3925),(-1.67,2.2),(-0.4,2.1),(-3.05,2.34),(-4.7,2.53),(-6.43,2.77),(0.9,2.05),(2.3,2.02),(4,2.025)
}{
\filldraw[white] \Point circle (2pt);
}
\filldraw[white] (-8.54,3.045) circle (2.5pt);
\filldraw[white] (-8.55,3.035) circle (2.5pt);
\node[rotate = -15] at (-10,4.7) {$\dots$};
\node[rotate = 5] at (-10,0.9) {$\dots$};
\node[rotate = -5] at (-10,3.2){$\dots$};
\node[rotate = 0] at (9.7,2) {$\dots$};
\node[rotate = -10] at (7.5,-0.1) {$\dots$};
\node[rotate = 15] at (10,4.1) {$\dots$};
\foreach \Point in { (-1.6,3.2),(-0.3,3.15),(-3,3.3),(-4.7,3.55),(-6.4,3.87),(-8.4,4.27),(1,3.18),(2.4,3.27),(4.2,3.425),(6.8,3.7), (-1.4,2.2), (0,2.1),(-2.6,2.3),(-4,2.45),(-5.5,2.65),(-7.3,2.88),(1.6,2.04),(3.2,2),(5.6,2),(8.2,2),(-1.8,1),(-0.5,0.93),(-3.07,1.1),(-4.7,1.1),(-6.5,1.07),(-8.8,1.01),(0.8,0.8),(2.2,0.63),(3.8,0.43),(5.8,0.17)
}{
\filldraw \Point circle (1.5pt);
}
\foreach \Point in {(-0.4,2),(-0.2,1.6),(-0.13,2.55),
(-2.75,1.9),(-3.04,2.2),(-2.75,2.7),(-4.23,2),(-4.45,1.6),(-4.17,2.73),(-4.38,3.04),(-4.7,2.65),(-4.7,2),(-6.1,3.45),(-5.9,3.2),(-5.72,2.95),(-5.79,2.2),(-6,1.85),(-6.22,1.5),(-6.425,3),(-6.45,2.4),(-6.47,1.9),(-8.1,3.9),(-7.9,3.65),(-7.7,3.4),(-7.5,3.155),(-7.6,2.5),(-7.9,2.125),(-8.2,1.75),(-8.5,1.4),(-8.7,1.8),(-8.63,2.35),(-8.57,2.9),(-8.5,3.5),(0.93,2.4),(0.87,1.6),(1.32,1.6),(1.1,1.25),(1.4,2.4),(1.2,2.8),(2.3,1.95),(2.35,2.55),(2.25,1.35),(2.47,1),(2.9,1.6),(2.7,1.3),(3,2.3),(2.83,2.6),(2.64,2.9),(4.14,3),(4.06,2.4),(3.95,1.5),(3.88,1),(4.5,3.1),(4.75,2.85),(5,2.6),(5.25,2.35),(5.155,1.6),(4.8,1.3),(4.45,1),(4.1,0.7),(6.3,1.9),(6.15,1.4),(5.98,0.8),(6.455,2.5),(6.6,3),(7.1,3.35),(7.5,2.85),(7.3,3.1),(7.7,2.6),(7.95,2.3),(6.25,0.5),(7,1.085),(6.625,0.7925),(7.8,1.7),(7.4,1.3925)
}{
\filldraw \Point circle (1.5pt);
}
\draw (-9.5,1) .. controls (-2,1.2) .. (7,0);
\draw (-9.5,3.2) .. controls (-1,2) .. (9.2,2);
\draw (-9.5,4.5) .. controls (-1.2,2.8) .. (9.5,4);
		\end{tikzpicture}
		\caption{$\Theta_3$-extension of the infinite path}
		\label{fig:t3}
	\end{figure}
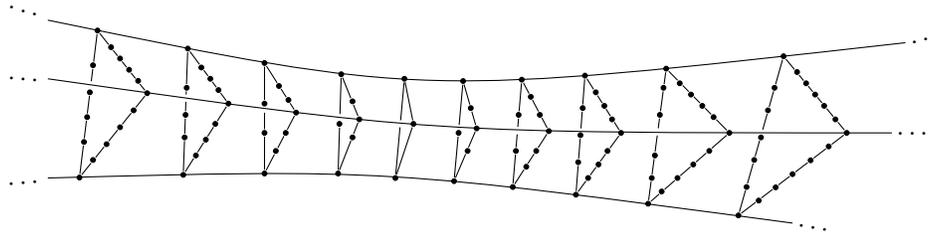
In Section~\ref{sec:ThetaExtensions} we describe some constructions that yield graphs with arbitrary weak and strong cop numbers. Let us summarize the results.
Let $\Gamma$ be a graph and let $n$ be a positive integer $n$.
We define a graph that we call the $\Theta_n$-extension $\Theta_n(\Gamma)$ of $\Gamma$. The graph $\Theta_n(\Gamma)$ quasi-retracts to $\Gamma$ and if $d$ is an upper bound for the degree of vertices of $\Gamma$, then $d+n$ is an upper bound for the degree of vertices of $\Theta_n(\Gamma)$.
As an example, the graphs in Figures~\ref{fig:Theta2Z} and ~\ref{fig:t3} illustrate the $\Theta_2$-extension and the $\Theta_3$-extension of the infinite path, respectively. The graph $\Theta_n(\Gamma)$ quasi-retracts to $\Gamma$ and allows us to produce a variety of examples of graphs with different weak and strong cop numbers.
\begin{thmx}[Corollary~\ref{TcopN}]
For any connected graph $\Gamma$ and for any integer $n>0$,
\[ \wcop(\Gamma) \leq \wcop(\Theta_n(\Gamma)) \leq n\cdot\wcop(\Gamma)\] and
\[\scop(\Gamma) \leq \scop(\Theta_n(\Gamma))\leq n\cdot\scop(\Gamma).\]
\end{thmx}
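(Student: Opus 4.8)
The left-hand inequalities require no new work: the excerpt records that $\Gamma$ is a quasi-retract of $\Theta_n(\Gamma)$, so Theorem~\ref{thm:THEO1} immediately gives $\wcop(\Gamma)\le\wcop(\Theta_n(\Gamma))$ and $\scop(\Gamma)\le\scop(\Theta_n(\Gamma))$. Everything below concerns the right-hand inequalities, which I would obtain from the sharper assertion: if $\Gamma$ is $k$-weak (resp. $k$-strong) cop win, then $\Theta_n(\Gamma)$ is $nk$-weak (resp. $nk$-strong) cop win. Both cases come from the same construction: a strategy for $nk$ cops on $\Theta_n(\Gamma)$ that runs $n$ synchronized copies of a winning $k$-cop strategy on $\Gamma$, one copy ``living'' in each of the $n$ parallel pieces out of which $\Theta_n(\Gamma)$ is assembled.

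Concretely, I would use from Section~\ref{sec:ThetaExtensions} that $\Theta_n(\Gamma)$ is covered by $n$ subgraphs $\Gamma_1,\dots,\Gamma_n$ equipped with Lipschitz maps $\iota_j\colon\Gamma\to\Gamma_j$ and a Lipschitz projection $\pi\colon\Theta_n(\Gamma)\to\Gamma$ (the first component of the quasi-retraction onto $\Gamma$), all with common constants, such that every vertex $x$ lies in some $\Gamma_{j(x)}$ at distance at most $C$ from $\iota_{j(x)}(\pi(x))$. Fix a winning $k$-cop strategy on $\Gamma$ with speed $\sigma_0$ and reach $\rho_0$. Given a target vertex $v$ and robber parameters $\psi',R'$ on $\Theta_n(\Gamma)$, the $nk$ cops are split into $n$ squads of $k$, and all squads run the $\Gamma$-strategy against the common shadow $\bar r_m=\pi(r_m)$ of the robber, for the target $\pi(v)$ and robber parameters $\psi=L\psi'$ and $R=LR'$, where $L$ is the common Lipschitz constant; since the strategy is deterministic, all squads compute the same virtual positions $\bar c^1_m,\dots,\bar c^k_m$, and squad $j$ is placed at $\iota_j(\bar c^1_m),\dots,\iota_j(\bar c^k_m)$. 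This requires cop speed only $\sigma_1=L\sigma_0$, fixed in advance, while the reach $\rho_1$ is taken to be a constant chosen in terms of $L,C,\rho_0$ — hence chosen in advance in the weak case and after $\psi'$ in the strong case, matching the two quantifier orders — large enough for the two estimates that follow. First, $\pi$ being $L$-Lipschitz, the shadow moves with $\Gamma$-speed at most $\psi$; and for any vertex $x$ visited by the robber, if $\pi(x)$ came within $\rho_0$ of some $\bar c^i_m$ (allowing a small additive buffer), then routing through $\iota_{j(x)}(\pi(x))$ and using $\dist(x,\iota_{j(x)}(\pi(x)))\le C$ would place the real cop $\iota_{j(x)}(\bar c^i_m)$ of squad $j(x)$ within $\rho_1$ of $x$, contradicting legality of the robber's move. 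Hence the shadow is always a legal play of the $\Gamma$-game, so it is eventually captured or pushed out of the $R$-ball about $\pi(v)$. Second, the same routing estimate shows that capture of the shadow forces capture of the real robber with reach $\rho_1$, while $\dist_\Gamma(\pi(r_m),\pi(v))>R=LR'$ forces $\dist_{\Theta_n(\Gamma)}(r_m,v)>R'$ because $\pi$ is $L$-Lipschitz. Either way the $nk$ cops win, giving $\wcop(\Theta_n(\Gamma))\le nk$ and, with the reach chosen after the robber's speed, $\scop(\Theta_n(\Gamma))\le nk$.

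I expect the crux to be the coupling between the two games: verifying that the robber on $\Theta_n(\Gamma)$ gains nothing by moving between the pieces $\Gamma_j$ — which is precisely why all $n$ squads must track the single shadow $\pi(r_m)$ simultaneously, so that whichever piece the robber currently occupies already contains a correctly positioned squad — and calibrating $\rho_1$ against $\rho_0$ so that ``the real robber's path is legal'' implies ``the shadow's path is legal'' while ``the shadow is captured'' still implies ``the real robber is captured.'' The remaining ingredients — that $\pi$ turns a robber path of length $\psi'$ into a $\Gamma$-walk of length at most $L\psi'$, the compatibility of the initial placements, and the two quantifier orders separating the weak and strong statements — are routine once the uniform constants $L$ and $C$ from the construction of $\Theta_n(\Gamma)$ are in hand.
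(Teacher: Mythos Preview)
Your treatment of the lower bounds is correct and matches the paper's: the pair $(\eta_k,g)$ is a $(1,0)$-quasi-retraction and Theorem~\ref{thm:THEO1} applies.

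For the upper bounds, your overall architecture (run $n$ synchronized copies of the $\Gamma$-strategy, one in each $\Gamma_j$, against the common shadow $\pi(r_m)$) is exactly what the paper does. However, there is a genuine gap in your implementation. You assume the existence of a uniform constant $C$ such that every vertex $x$ of $\Theta_n(\Gamma)$ lies in some $\Gamma_{j(x)}$ with $\dist(x,\iota_{j(x)}(\pi(x)))\le C$. This is false for $\Theta_n(\Gamma)$: the copies $\Gamma_1,\ldots,\Gamma_n$ do \emph{not} cover $\Theta_n(\Gamma)$, and interior bridge vertices can sit arbitrarily far from every $\Gamma_j$. Indeed, the bridge between $\eta_i(x)$ and $\eta_j(x)$ has length $\dist_\Gamma(u_0,x)+1$, so its midpoint is at distance roughly $\tfrac12\dist_\Gamma(u_0,x)$ from each endpoint, and this is unbounded as $x$ ranges over $\Gamma$. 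Consequently your routing estimate ``$\dist(x,\iota_{j(x)}(\pi(x)))\le C$'' is unavailable, and with it your deduction that capture of the shadow forces capture of the real robber with some fixed reach $\rho_1$.

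The paper's proof of Proposition~\ref{lemTHn} confronts exactly this issue and resolves it differently. Because $\eta_k$ is an isometric embedding and the shadow map $g$ is $(1,0)$-Lipschitz, the paper runs the two games with the \emph{same} parameters $(\sigma,\rho,\psi,R)$ and no constant $C$ enters. Capture of the shadow does not immediately capture the robber; instead it places the robber in \emph{zugzwang}: the robber sits on a bridge whose two endpoints $\eta_i(x),\eta_j(x)$ are each within reach of a cop (the copies of the capturing $\Gamma$-cop), so the robber cannot leave the bridge, and the cops then walk inwards along the bridge to finish in finitely many further stages. Your proposal is missing precisely this endgame argument; once you replace the nonexistent bound $C$ by the zugzwang step, the rest of your outline (legality of the shadow's walk via the $(1,0)$-Lipschitz projection, and the ball comparison) goes through with no parameter inflation at all.
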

A graph is unbounded if it has infinite diameter. For example, infinite locally finite connected graphs are unbounded by Konig's lemma.
\begin{thmx}[Theorem~\ref{CoroTcop}]\label{thmx:Range}
Let $\Gamma$ be a connected unbounded graph.
If $\wcop(\Gamma)=1$, then $\wcop(\Theta_n(\Gamma))=n$.
Analogously, if $\scop(\Gamma)=1$, then $\scop(\Theta_n(\Gamma))=n$.
\end{thmx}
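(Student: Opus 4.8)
The upper bounds are already available: Corollary~\ref{TcopN} gives $\wcop(\Theta_n(\Gamma))\le n\cdot\wcop(\Gamma)$, which equals $n$ when $\wcop(\Gamma)=1$, and likewise $\scop(\Theta_n(\Gamma))\le n\cdot\scop(\Gamma)=n$ when $\scop(\Gamma)=1$. So the whole substance of the theorem is the pair of reverse inequalities $\wcop(\Theta_n(\Gamma))\ge n$ and $\scop(\Theta_n(\Gamma))\ge n$; equivalently, $n-1$ cops cannot win on $\Theta_n(\Gamma)$ in the weak game, respectively in the strong game. The plan is to exhibit, for every connected \emph{unbounded} $\Gamma$, a single robber strategy that defeats any $n-1$ cops (the assumptions $\wcop(\Gamma)=1$, $\scop(\Gamma)=1$ are used only for the upper bounds). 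The point that lets one strategy serve both inequalities is that the robber's speed $\psi$ will be chosen as a function of $n$ and of the cops' speed $\sigma$ \emph{alone}, with the radius $R$ and the vertex $v$ chosen afterwards in terms of $\rho$ as well. Thus the robber can commit to $\psi$ knowing only $\sigma$ — which is what defeating the strong game requires — hence a fortiori in the weak game; and in both games $R$ and $v$ are picked after $\rho$, and the bad vertex $v$ witnessing failure of $\mathsf{CopWin}$ is the robber's to choose.

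Recall from Section~\ref{sec:ThetaExtensions} the features of $\Theta_n(\Gamma)$ the strategy uses: it is assembled from $n$ copies $\Gamma_1,\dots,\Gamma_n$ of $\Gamma$ — the \emph{sheets} — which are glued together near a basepoint $v_0$ but pull apart far from it; over each vertex $w$ of $\Gamma$ there is a \emph{rung} joining the $n$ copies $w^{(1)},\dots,w^{(n)}$ of $w$ whose length $\ell(w)$ is comparable to $\dist_\Gamma(v_0,w)+1$; and the quasi-retraction $\Theta_n(\Gamma)\to\Gamma$ collapses each sheet back onto $\Gamma$ and sends the rung over $w$ into a bounded neighbourhood of $w$. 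Two ideas drive the robber's play. The first is a \emph{pigeonhole}: there are $n$ sheets but only $n-1$ cops, so at every stage some sheet is cop-free, and the robber's standing invariant is to occupy such a sheet. The second is that \emph{long rungs are safe corridors}: because $\Gamma$ is unbounded, the rungs of $\Theta_n(\Gamma)$ are arbitrarily long, so once $\rho$ is known the robber can take $R$ large enough that the ball $B(v,R)$ (with $v:=v_0^{(1)}$) contains rungs over vertices $w$ with $\dist_\Gamma(v_0,w)$ far larger than $\rho$; a rung of length $\ell(w)$ can be crossed, at speed $\psi\ge 2\sigma$, before any cop that is currently at distance at least a fixed fraction of $\ell(w)$ from it can intervene, because the head start the robber needs scales with $\ell(w)$ — a quantity he himself controls by choosing $w$ — while a cop of reach $\rho$ commands only a bounded stretch of the thin rung.

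Putting these together: the robber starts on a cop-free sheet inside $B(v,R)$ and stays put while it remains cop-free; the first time a cop comes within a fixed threshold of him, he picks a cop-free sheet $\Gamma_j$ (one exists by pigeonhole) together with a currently cop-free rung over a vertex $w$ with $\dist_\Gamma(v_0,w)$ large but still with $w^{(j)}\in B(v,R)$ and $w$ far from every cop's current projection to $\Gamma$ — such a $w$ exists because $\Gamma$ is unbounded and $R$ was taken large — and migrates along that rung to $w^{(j)}$; since $\psi$ is a large multiple of $\sigma$ and the rung is long and clear, no cop reaches him during the crossing or meets him at $w^{(j)}$, and he continues from there. Iterating, the robber is never captured and never leaves $B(v,R)$, so $n-1$ cops cannot protect that ball; this gives $\wcop(\Theta_n(\Gamma))\ge n$ and, via the remark on $\psi$, $\scop(\Theta_n(\Gamma))\ge n$, and with the upper bounds from Corollary~\ref{TcopN} the theorem follows. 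The main obstacle is exactly the bookkeeping at each forced migration: one must verify that there is \emph{simultaneously} a cop-free destination sheet, a long-enough clear rung inside $B(v,R)$ joining the robber's sheet to it, and a head start no coalition of $n-1$ cops of speed $\sigma$ and reach $\rho$ can catch him within the $O(\ell(w)/\psi)$ stages the crossing takes — counting also the cop that triggered the migration while chasing the robber on his own sheet. Making this rigorous calls for fixing the thresholds in the right order (first the ``cop-free'' radius around the basepoint region, then the minimal admissible rung length, then $\psi$ in terms of $\sigma$ and $n$, and only then $R$ in terms of $\rho$) together with a coverage estimate bounding the territory $n-1$ such cops can control while the robber relocates.
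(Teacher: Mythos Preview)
Your observation that $\psi$ must depend on $\sigma$ alone (so one robber strategy handles both the weak and the strong game) is correct and matches the paper. But the lower-bound strategy has a real gap. You say that when threatened the robber ``picks a cop-free sheet $\Gamma_j$ together with a currently cop-free rung over a vertex $w$\ldots far from every cop's current projection to $\Gamma$'' and then ``migrates along that rung to $w^{(j)}$''. The robber, however, is sitting at some $\eta_i(x_0)$, not at $\eta_i(w)$; to use the rung over $w$ he must first travel inside sheet $\Gamma_i$ from $\eta_i(x_0)$ to $\eta_i(w)$. Since $w$ is to be far from the approaching cop's projection --- and that cop is near $x_0$ --- this in-sheet leg is long, yet you have already fixed $\psi$ in terms of $\sigma$ only, so it takes many stages during which the cops reposition. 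Your deferred ``coverage estimate'' would have to control this, and it is not clear it can. If instead you keep the robber at a single fixed $\eta_i(w_0)$ and use only the $n-1$ bridges over $w_0$, the pigeonhole fails outright: the cops can post one at each $\eta_j(w_0)$, $j\neq i$, then send one of them down a bridge toward the robber; now every bridge out of $\eta_i(w_0)$ terminates at a cop.

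The paper's remedy is to \emph{double} the supply of bridges by using two adjacent base vertices $x,y$ with $\dist_\Gamma(u_0,x)=m$ and $\dist_\Gamma(u_0,y)=m+1$ (such exist because $\Gamma$ is unbounded). The robber never leaves the $2n$ corners $\eta_i(x),\eta_i(y)$ and the bridges among them --- a convex subgraph of $\Theta_n(\Gamma)$ isomorphic to a fixed finite graph $\theta_{n,m}$. From each sibling pair $\{\eta_i(x),\eta_i(y)\}$ there are now $2(n-1)$ outgoing bridges; each cop can block at most the two bridges leading to its own nearest sibling pair, so $n-1$ cops block at most $2(n-1)$ bridges, and in the tight case where equality holds every cop is forced to be at distance at least $m/2$ from the robber's current pair, so he simply stays. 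This counting (Lemma~\ref{lemma:theta}) is exactly what replaces your coverage estimate and makes the argument close; one then takes $\psi=2\sigma+3$ and $m$ large in terms of $\sigma,\rho$.
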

The $\Theta_n$-extensions allow us to construct graphs that have different weak and strong cop numbers. Specifically,
using trees and graphs arising as a regular tiling of the hyperbolic plane we obtain:
\begin{corx}
Let $n$ be a positive integer.
\begin{enumerate}
\item There is a graph $\Delta$ such that $\wcop(\Delta)=n$ and $\scop(\Delta) = n$.
\item There is a graph $\Gamma$ such that $\wcop(\Gamma)=\infty$ and $\scop(\Gamma)=n$.
\end{enumerate}
\end{corx}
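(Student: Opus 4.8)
The plan is to realize both graphs as $\Theta_n$-extensions of carefully chosen base graphs, and to read off the cop numbers from Theorem~\ref{thmx:Range} together with the inequalities $\wcop(\Gamma)\le\wcop(\Theta_n(\Gamma))\le n\cdot\wcop(\Gamma)$ and $\scop(\Gamma)\le\scop(\Theta_n(\Gamma))\le n\cdot\scop(\Gamma)$ stated above.

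For item~(1) I would take $\Delta=\Theta_n(P)$, where $P$ is the infinite path. As noted in the introduction, $P$ is connected and unbounded with $\wcop(P)=\scop(P)=1$; hence both halves of Theorem~\ref{thmx:Range} apply to the base graph $P$ and give $\wcop(\Delta)=n$ and $\scop(\Delta)=n$ simultaneously. (For $n=1$ one may instead simply take $\Delta=P$.)

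For item~(2) I would take $\Gamma=\Theta_n(H)$, where $H$ is a graph arising as a regular tiling of the hyperbolic plane, for instance the order-$7$ triangular tiling of Figure~\ref{fig:7-3-tiling}. By the corollaries on such tilings established above, $\scop(H)=1$ and $\wcop(H)=\infty$; moreover $H$, being infinite, connected and locally finite, is unbounded by Konig's lemma. The strong-cop-number half of Theorem~\ref{thmx:Range} then gives $\scop(\Gamma)=n$, while the lower bound $\wcop(H)\le\wcop(\Theta_n(H))$ forces $\wcop(\Gamma)=\infty$.

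I do not expect a genuine obstacle: the statement is essentially a bookkeeping consequence of results already in hand. The only points that need a line of justification are that the base graphs $P$ and $H$ are connected and unbounded (so that Theorem~\ref{thmx:Range} is applicable) and that $\Theta_n$ of a connected graph is again connected, both of which are immediate from the construction in Section~\ref{sec:ThetaExtensions}.
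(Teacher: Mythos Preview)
Your proposal is correct and matches the paper's own approach: the paper states just before the corollary that the examples are obtained ``using trees and graphs arising as a regular tiling of the hyperbolic plane,'' which is exactly what you do---the infinite path $P$ is a tree with $\wcop(P)=\scop(P)=1$, and for item~(2) you invoke the same hyperbolic tilings and the same combination of Theorem~\ref{thmx:Range} with the lower bound $\wcop(H)\le\wcop(\Theta_n(H))$.
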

It is not difficult to verify that for any pair of connected graphs $\Gamma$ and $\Delta$ that
\[ \wcop(\Gamma \vee \Delta) = \max\{\wcop(\Gamma),\wcop(\Delta)\},\qquad \scop(\Gamma \vee \Delta) = \max\{\scop(\Gamma),\scop(\Delta)\}\]
where $\Gamma \vee \Delta$ is the wedge sum, that is, the graph obtained by identifying a vertex of $\Gamma$ with a vertex of $\Delta$. More generally, if $\{\Delta_i\mid i\in I\}$ is a collection of graphs,
\[ \wcop\left(\bigvee_{i\in I} \Delta_i\right) = \sup\{\wcop\left(\Delta_i\right)\mid i\in I\},\qquad \scop\left(\bigvee_{i\in I} \Delta_i\right) = \sup\{\scop(\Delta_i)\mid i\in I\}.\]
Using the graphs provided by the previous corollary and using an infinite wedge sum one can produce a locally finite connected graph with infinite strong cop number. We do not have examples addressing the following question.
\begin{quex}\label{quex:Range}
Let $n,m$ arbitrary positive integers, such that $n>m$. Does there exist a graph $\Gamma$ with $\wcop(\Gamma)=n$ and $\scop(\Gamma)=m$?
\end{quex}
\subsection*{Acknowledgements}
The authors thank the referees for their comments,   suggestions and corrections. The article was substantially improved due to their detailed reports. Most results in this article are part of the Master projects of the first and third authors under the supervision of the second author. 
The authors also thank Danny Dyer and Stephen Finbow for comments and corrections on the Master theses. The revision of this article was done during a visit of the second author to Tokyo Metropolitan University; the second author thanks Professor Tomohiro Fukaya  for his hospitality. The second author also acknowledges funding by the Natural Sciences and Engineering Research Council of Canada, NSERC.

\section{The square grid has infinite weak cop number}
\begin{theorem}\label{thm:SquareGrid}
	The square grid has infinite weak cop number.
\end{theorem}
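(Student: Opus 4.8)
The plan is to show that $P^2=\ZZ^2$ (with the $\ell^1$ path metric) is not $n$-weak cop win for any integer $n\ge 1$; letting $n\to\infty$ then gives $\wcop(P^2)=\infty$. Fix $n$ and, as the definition of weak cop win demands, let the cops' speed $\sigma\ge 1$ and reach $\rho\ge 0$ be given first. I must then produce a robber speed $\psi$, a radius $R$, and a robber strategy for the protected vertex $v=(0,0)$ that prevents $n$ cops of speed $\sigma$ and reach $\rho$ from ever protecting $B(0,R)$, where $B(p,s)=\{x:\|x-p\|\le s\}$ and $\|\cdot\|$ is the $\ell^1$-norm. The guiding principle is that $n$ balls of radius $\rho+\sigma$ form only a bounded obstacle in the plane, so a robber that is very fast compared to $n,\sigma,\rho$ can, on every move, retreat to a fixed enormous safe haven around the origin.

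First I would isolate the geometric core. For a finite set $F\subseteq\ZZ^2$, the complement $\ZZ^2\setminus F$ has a unique unbounded component $U(F)$; writing $S_F=\ZZ^2\setminus U(F)$, every edge leaving $S_F$ in fact leaves $F$, so $\partial_E S_F\subseteq\partial_E F$, and the edge-isoperimetric inequality in $\ZZ^2$ ($|\partial_E T|\ge 2\sqrt{|T|}$ for finite $T$) gives $|S_F|\le(|\partial_E F|/2)^2$. Taking $F=\bigcup_{i=1}^n B(c_i,s)$ and using $|\partial_E B(p,s)|=4(2s+1)$ yields $|S_F|\le m$ for a constant $m=m(n,s)$. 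From this I extract two facts: (i) $U(F)$ contains a vertex $w$ with $\|w\|\le m$ (else $B(0,m)$, which has more than $m$ vertices, would lie in $S_F$); and (ii) if $x,y\in U(F)$ then they are joined inside $\ZZ^2\setminus F$ by a path of length at most $\|x-y\|+C\,n(s+1)$ for a universal constant $C$ — take an $\ell^1$-geodesic and reroute around each component of $F$ it meets, at total extra cost at most $|\partial_E F|$.

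Now set $s=\rho+\sigma$, let $m=m(n,s)$, and choose $R=\psi:=2m+2C\,n(s+1)+2$, so that $B(0,R/2)$ has more than $m$ vertices and $R/2+m+C\,n(s+1)\le R$. The robber maintains after each stage $k$ the invariant $(I_k)$: $r_k$ lies in the unbounded component $U_k$ of $\ZZ^2\setminus\bigcup_{i=1}^n B(c^i_k,\rho+\sigma)$, and $\|r_k\|\le R/2$. Fact (i) provides a legal initial choice $r_0$ (the cops place first). For the inductive step, assume $(I_{k-1})$: then $r_{k-1}\notin\bigcup_i B(c^i_{k-1},\rho+\sigma)$, so after the cops move (each at most $\sigma$) we still have $\dist(r_{k-1},c^i_k)>\rho$ for all $i$, hence the robber is not captured during stage $k$. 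Also $\bigcup_i B(c^i_k,\rho)\subseteq\bigcup_i B(c^i_{k-1},\rho+\sigma)$, so the connected unbounded sets $U_{k-1}$ and $U_k$ both lie in the unbounded component $V_k$ of $\ZZ^2\setminus\bigcup_i B(c^i_k,\rho)$. Choosing $w^\ast\in U_k$ as in (i) and applying (ii) with $x=r_{k-1}$, $y=w^\ast$ gives a path from $r_{k-1}$ to $w^\ast$ inside $\ZZ^2\setminus\bigcup_i B(c^i_k,\rho)$ — so every vertex on it is at distance more than $\rho$ from every cop — of length at most $\|r_{k-1}\|+\|w^\ast\|+C\,n(s+1)\le R/2+m+C\,n(s+1)\le R=\psi$. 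The robber traverses it to $r_k:=w^\ast$, restoring $(I_k)$.

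The invariant finishes the argument: the robber is never captured, and $r_k\in B(0,R/2)\subseteq B(0,R)$ for every $k$, so the cops never protect the $R$-ball about $0$; hence $P^2$ is not $\mathsf{CopWin}(n,\sigma,\rho,\psi,R)$ and so not $n$-weak cop win, for every $n$. I expect the main obstacle to be the geometric lemma — uniqueness of the unbounded component together with the cosize bound on it — and the care needed in (ii) to verify that the retreat path can always be completed without the robber being trapped inside a bounded region enclosed by the cop-balls. The dynamics themselves are soft: slow cops can shrink the haven by only a bounded amount per stage, while the disproportionately fast robber crosses all the way back to the haven near the origin in a single move, which is precisely what the quantifier order of weak cop win — the cops committing to $\sigma,\rho$ before $\psi,R$ are chosen — makes possible.
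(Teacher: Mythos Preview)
Your proof is correct and takes a genuinely different route from the paper's. The paper argues combinatorially: it tiles a rectangle inside $B_R(u_0)$ by $n+2$ congruent squares so that at least two are always cop-free, parks the robber at the centre of one, and when forced to relocate uses a pigeonhole over $2(n+n\rho)$ horizontal translates of the direct path to find one avoiding every cop's reach. Your argument is structural: the edge-isoperimetric inequality bounds the region trapped by $n$ cop-balls, one-endedness of $\ZZ^2$ gives a unique unbounded complementary component containing a vertex near the origin, and the robber maintains the invariant of sitting in that component within $B(0,R/2)$, hopping back each stage along a rerouted path. This is essentially the machinery the paper develops in Section~\ref{sectionamenable} for one-ended non-amenable vertex-transitive graphs (compare your facts (i)--(ii) with Lemmas~\ref{lem:ValidMove}--\ref{lem:spoint}), specialised to the amenable grid where polynomial isoperimetry stands in for a positive Cheeger constant. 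The paper's argument is more elementary and self-contained; yours points toward generalisations beyond $\ZZ^2$.

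One detail to tighten: fact~(ii) needs more than the one-line sketch. Rerouting an $\ell^1$-geodesic around components of $F$ rather than of $S_F$ can strand the path in a bounded pocket of $\ZZ^2\setminus F$, and the outer vertex boundary of a component of $S_F$, while $8$-connected, need not be $4$-connected, so walking around it takes more care and may cost a larger constant than $Cn(s+1)$. This does not threaten the strategy --- any finite bound depending only on $n,\sigma,\rho$ suffices, since $\psi$ and $R$ are chosen afterwards --- but the stated additive constant is not established by the sketch as written.
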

\begin{proof}	
	Consider a game on the infinite square grid such that there are $n$ cops each with speed $\sigma$ and reach $\rho$.
	We prove below that the cops cannot protect any ball of radius $R=2(n + n\rho + \rho +\sigma)(n+3)$ from a robber with speed $\psi=2R$.
	
	In the grid, consider a rectangle with base of length
	$2(n + n\rho + \rho +\sigma)(n+2)$ and height
	$2(n + n\rho + \rho +\sigma)$ contained in the ball
	${B_R}(u_0)$. The set of vertices on the boundary or in the interior of the rectangle can be partitioned into $n+2$ squares, each square with side of length $2(n + n\rho + \rho +\sigma)$. From this point forward, we will refer to these squares, all vertices and edges on and within their respective boundaries, simply as $\emph{squares}$ and refer to a square with no cops inside as a $\emph{cop-free square}$.
	
	There are $n+2$ squares and $n$ cops, so at any point in the game there must be at least $2$ cop-free squares. Assume that at the beginning of the game, the robber positions himself at the center of a cop-free square. Note that a robber at the center of a cop-free square is at a distance greater than $n + n\rho + \rho +\sigma$ from any cop.
	
	During each of the following stages of the game, the robber remains at the center of their cop-free square or, if their square is no longer cop-free, moves to one that is. We need to show that such a move is possible:
	
	\begin{lemma}[Robber's move during a single stage]
		If at the beginning of the stage the robber is located at the center of a square which is cop-free. Then, after the cops have moved in this stage, there is a path from the location of the robber to the center of a cop-free square that does not pass within reach of a cop and has length $\psi$ or less.
	\end{lemma}
	\begin{proof}
		At the beginning of the stage, all cops are at  distance greater than $n + n\rho + \rho +\sigma$ from the robber, so after the cops  have moved, this distance must still be greater than $n + n\rho + \rho$. The robber can therefore move a distance of $n + n\rho$ in any direction and remain outside the reach of the cops.
		
		Denote the geodesic from the robber's location to the center of a chosen cop-free square by $p_0$. Note that this is a horizontal path. Define the paths $p_i$ to be the vertical translations of $p_0$ by $i$-units, where $-(n + n\rho) < i \leq n + n\rho$.
		
		The maximum number of paths from $\{p_i\}$ that contain a vertex that a fixed cop can reach is $2\rho+1$, so the maximum number of paths that $n$ cops can reach is $2\rho n+n$. There are a total of $2(n + n\rho)$ paths in $\{p_i\}$, so there are at least $2(n + n\rho)-(2\rho n+n)=n$ paths from $\{p_i\}$ that are out of reach of the cops.
		
		Note that the starting point of each $p_i$ is distance $n + n\rho$ or less from the robber's location and the endpoint of each is distance $n + n\rho$ or less from the center of a cop-free square. The endpoint of each $p_i$ is thus out of reach of all cops and there is a path from the endpoint to the center of the cop-free square that is also out of reach. It follows that if $p_i$ is out of reach of all cops, then the concatenated path from the robber's location to $p_i$ to the center of the cop-free square is also out of reach of all cops. The length of this path is less than or equal to $2(n + n\rho) + 2(n + n\rho+\rho+\sigma)(n+1) < 2R = \psi$. Hence the robber can reach the center of a cop-free square without being captured during this stage.
		\end{proof}
		
		This defines a winning strategy for the robber in a game with $n$ cops with arbitrary speed $\sigma$ and arbitrary reach $\rho$ on the square grid.
\end{proof}
\section{Hyperbolic Graphs are $1$-strong cop win}\label{sec:hyperbolic}
There are several equivalent definitions of $\delta$-hyperbolic space, we refer the reader to~\cite{brid} for a survey. For our purposes, we use the following:
\begin{definition}\cite[Chapter III.H, Definition 1.1]{brid}
Let $X$ be a geodesic metric space and let $\delta>0$. A triangle is said to be \emph{$\delta$-slim} if each one of its sides is contained in the $\delta$-neighborhood of the union of the other two sides, see Figure~\ref{fig:deltaslim}.
A geodesic metric space $X$ is said to be \textit{$\delta$-hyperbolic} if every geodesic triangle is $\delta$-slim. A metric space is \emph{hyperbolic} if it is $\delta$-hyperbolic for some $\delta\geq0$.
\end{definition}
\begin{figure}[ht]
\centering
\begin{tikzpicture}[scale=1.1]
\draw[black!50,line width=1pt,dashed] (1.25,-2.85) -- (1.95,-3.55);
\node at (1.3,-3.45){$\delta$};
\draw[black!50,line width=1pt,dashed] (4.7,-2.7) -- (4.25,-3.6);
\node at (4.725,-3.4){$\delta$};
\draw[black!50,line width=1pt,dashed] (4.39,-4) -- (4.255,-5.05);
\node at (4.5725,-4.525){$\delta$};
\draw[line width=1pt]
(22.89429pt, -149.875pt) .. controls (11.09529pt, -149.875pt) and (-0.093216pt, -142.8693pt) .. (-4.860962pt, -131.281pt)
-- (-4.860962pt, -131.281pt)
-- (-4.860962pt, -131.281pt)
-- (-4.860962pt, -131.281pt) .. controls (-11.16472pt, -115.9585pt) and (-3.853714pt, -98.42651pt) .. (11.46878pt, -92.1235pt)
-- (11.46878pt, -92.1235pt)
-- (11.46878pt, -92.1235pt)
-- (11.46878pt, -92.1235pt) .. controls (40.46753pt, -80.19327pt) and (45.47528pt, -76.70874pt) .. (52.38504pt, -23.737pt)
-- (52.38504pt, -23.737pt)
-- (52.38504pt, -23.737pt)
-- (52.38504pt, -23.737pt) .. controls (54.52779pt, -7.307495pt) and (69.58253pt, 4.275513pt) .. (86.01279pt, 2.131256pt)
-- (86.01279pt, 2.131256pt)
-- (86.01279pt, 2.131256pt)
-- (86.01279pt, 2.131256pt) .. controls (102.4423pt, -0.011505pt) and (114.0238pt, -15.06775pt) .. (111.881pt, -31.49649pt)
-- (111.881pt, -31.49649pt)
-- (111.881pt, -31.49649pt)
-- (111.881pt, -31.49649pt) .. controls (107.6915pt, -63.616pt) and (103.0356pt, -85.82574pt) .. (91.6333pt, -104.3073pt)
-- (91.6333pt, -104.3073pt)
-- (91.6333pt, -104.3073pt)
-- (91.6333pt, -104.3073pt) .. controls (77.30679pt, -127.5273pt) and (55.61005pt, -138.8417pt) .. (34.2973pt, -147.61pt)
-- (34.2973pt, -147.61pt)
-- (34.2973pt, -147.61pt)
-- (34.2973pt, -147.61pt) .. controls (30.56305pt, -149.1467pt) and (26.69604pt, -149.875pt) .. (22.8943pt, -149.875pt) -- cycle
;
\draw[line width=1pt]
(187.865pt, -158.1242pt) .. controls (183.701pt, -158.1242pt) and (179.474pt, -157.2534pt) .. (175.4383pt, -155.4137pt)
-- (175.4383pt, -155.4137pt)
-- (175.4383pt, -155.4137pt)
-- (175.4383pt, -155.4137pt) .. controls (170.5033pt, -153.2079pt) and (146.396pt, -146.5164pt) .. (116.0735pt, -143.0717pt)
-- (116.0735pt, -143.0717pt)
-- (116.0735pt, -143.0717pt)
-- (116.0735pt, -143.0717pt) .. controls (81.6575pt, -139.1612pt) and (52.29202pt, -140.9004pt) .. (33.38977pt, -147.9677pt)
-- (33.38977pt, -147.9677pt)
-- (33.38977pt, -147.9677pt)
-- (33.38977pt, -147.9677pt) .. controls (17.86852pt, -153.7697pt) and (0.585526pt, -145.8917pt) .. (-5.216476pt, -130.3727pt)
-- (-5.216476pt, -130.3727pt)
-- (-5.216476pt, -130.3727pt)
-- (-5.216476pt, -130.3727pt) .. controls (-11.01848pt, -114.8529pt) and (-3.140472pt, -97.56842pt) .. (12.37852pt, -91.76642pt)
-- (12.37852pt, -91.76642pt)
-- (12.37852pt, -91.76642pt)
-- (12.37852pt, -91.76642pt) .. controls (76.95203pt, -67.6254pt) and (177.287pt, -90.31442pt) .. (200.33pt, -100.8204pt)
-- (200.33pt, -100.8204pt)
-- (200.33pt, -100.8204pt)
-- (200.33pt, -100.8204pt) .. controls (215.4066pt, -107.6942pt) and (222.0545pt, -125.4872pt) .. (215.1808pt, -140.5629pt)
-- (215.1808pt, -140.5629pt)
-- (215.1808pt, -140.5629pt)
-- (215.1808pt, -140.5629pt) .. controls (210.146pt, -151.6021pt) and (199.2545pt, -158.1234pt) .. (187.865pt, -158.1241pt) -- cycle
;
\draw[line width=1pt]
(187.88pt, -158.125pt) .. controls (184.457pt, -158.125pt) and (180.9755pt, -157.5348pt) .. (177.5735pt, -156.2898pt)
-- (177.5735pt, -156.2898pt)
-- (177.5735pt, -156.2898pt)
-- (177.5735pt, -156.2898pt) .. controls (168.1303pt, -152.8345pt) and (159.2488pt, -149.8518pt) .. (150.6598pt, -146.9673pt)
-- (150.6598pt, -146.9673pt)
-- (150.6598pt, -146.9673pt)
-- (150.6598pt, -146.9673pt) .. controls (122.5273pt, -137.5195pt) and (98.23105pt, -129.3603pt) .. (80.0488pt, -112.288pt)
-- (80.0488pt, -112.288pt)
-- (80.0488pt, -112.288pt)
-- (80.0488pt, -112.288pt) .. controls (59.1808pt, -92.69424pt) and (50.32104pt, -65.341pt) .. (52.16605pt, -26.2045pt)
-- (52.16605pt, -26.2045pt)
-- (52.16605pt, -26.2045pt)
-- (52.16605pt, -26.2045pt) .. controls (52.94604pt, -9.653503pt) and (66.99579pt, 3.130249pt) .. (83.54529pt, 2.349487pt)
-- (83.54529pt, 2.349487pt)
-- (83.54529pt, 2.349487pt)
-- (83.54529pt, 2.349487pt) .. controls (100.0948pt, 1.569489pt) and (112.8793pt, -12.47952pt) .. (112.0993pt, -29.02975pt)
-- (112.0993pt, -29.02975pt)
-- (112.0993pt, -29.02975pt)
-- (112.0993pt, -29.02975pt) .. controls (110.1643pt, -70.07651pt) and (119.2633pt, -73.13126pt) .. (169.7608pt, -90.08951pt)
-- (169.7608pt, -90.08951pt)
-- (169.7608pt, -90.08951pt)
-- (169.7608pt, -90.08951pt) .. controls (178.3078pt, -92.95975pt) and (187.9933pt, -96.21249pt) .. (198.1918pt, -99.94452pt)
-- (198.1918pt, -99.94452pt)
-- (198.1918pt, -99.94452pt)
-- (198.1918pt, -99.94452pt) .. controls (213.7513pt, -105.6385pt) and (221.7485pt, -122.8675pt) .. (216.0545pt, -138.427pt)
-- (216.0545pt, -138.427pt)
-- (216.0545pt, -138.427pt)
-- (216.0545pt, -138.427pt) .. controls (211.607pt, -150.5838pt) and (200.1155pt, -158.125pt) .. (187.88pt, -158.125pt) -- cycle
;
\draw[red,line width=1pt]
(187.8831pt, -128.1167pt) .. controls (118.8831pt, -102.8672pt) and (78.6328pt, -101.8675pt) .. (82.13306pt, -27.6167pt)
;
\draw[red,line width=1pt]
(82.13306pt, -27.6167pt) .. controls (74.63306pt, -85.11771pt) and (66.63281pt, -101.8675pt) .. (22.88306pt, -119.8667pt)
;
\draw[red,line width=1pt]
(22.88306pt, -119.8667pt) .. controls (78.38306pt, -99.11795pt) and (170.8828pt, -120.3654pt) .. (187.8831pt, -128.1167pt)
;
\end{tikzpicture}
\caption{$\delta$-slim condition }
\label{fig:deltaslim}
\end{figure}
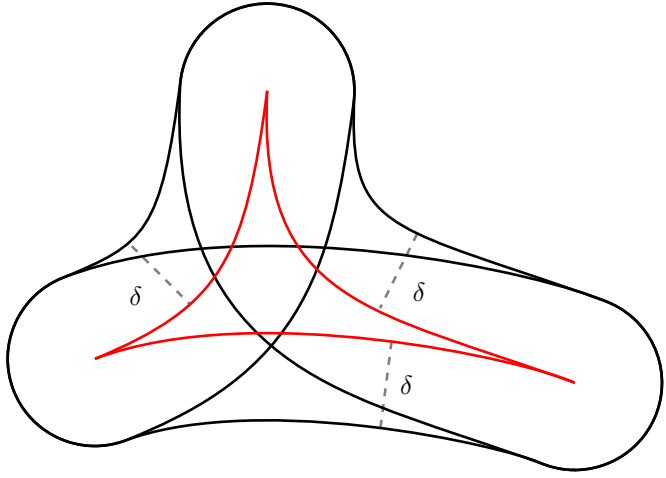
A connected graph can be considered a geodesic metric space by regarding each edge as a segment of length one and imposing the path metric, that is, the distance between any two points is the length of a shortest path.   A connected graph is hyperbolic if it is a hyperbolic metric space with respect to the path metric.

\begin{theorem}\label{hypcopwin1}
Hyperbolic graphs are $1$-strong cop win.
\end{theorem}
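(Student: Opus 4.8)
The plan is to give the single cop a fixed speed $\sigma$ that depends only on the hyperbolicity constant $\delta$ of $\Gamma$, and then, once the robber's speed $\psi$ is announced, to set the reach to $\rho$ roughly equal to $\psi$; for each target ball $B_R(v)$ the cop will slowly ``climb'' a geodesic from $v$ toward the robber's current position, thereby herding the robber outward until $\dist(v,r_k)>R$ forever (or the robber gets caught on the way). No use of exponential divergence of geodesics is needed: the only geometric input is that geodesic triangles are $\delta$-slim, used through the fact that two geodesics issuing from the same point $v$ whose far endpoints are within $\psi$ of each other fellow-travel (within $2\delta$) up to roughly the distance at which they could first diverge.

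Fix $\delta$ so that every geodesic triangle of $\Gamma$ is $\delta$-slim, let $\sigma$ be a large enough integer depending only on $\delta$ (it will be seen that $\sigma=4\lceil\delta\rceil+1$ suffices), and, given $\psi$, put $\rho=\psi+4\lceil\delta\rceil+1$. For a target ball $B_R(v)$ the cop starts at $c_0=v$; since a cop staying at $v$ captures everything within distance $\rho$, the robber is forced to pick $r_0$ with $\dist(v,r_0)>\rho$. Set $t_k=\min(k,R)$, a nondecreasing integer sequence that grows by at most $1$ each step and stabilises at $R$. The cop's rule at stage $k$, when the robber occupies $r_{k-1}$, is: move to the vertex $m_k$ at distance $\min(t_k,\dist(v,r_{k-1}))$ from $v$ along some geodesic from $v$ to $r_{k-1}$.

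The core of the proof is the invariant that, as long as the robber has not been caught, after stage $k$ one has $c_k=m_k$, $\dist(v,m_k)=t_k$, and $\dist(v,r_{k-1})>t_k+\rho$. Granting it at stage $k$: first, $\dist(v,r_k)\ge\dist(v,r_{k-1})-\psi>t_k+\rho-\psi=t_k+4\lceil\delta\rceil+1\ge t_{k+1}$, so the vertex $m_{k+1}$ genuinely lies at distance $t_{k+1}$ from $v$ on a geodesic to $r_k$. Second, applying $\delta$-slimness to the triangle on $v,r_{k-1},r_k$ (whose side $[r_{k-1},r_k]$ has length at most $\psi$ and whose two sides from $v$ are long enough, by the distance estimates just made, not to have diverged before arclength $t_{k+1}$) gives $\dist(m_k,m_{k+1})\le (t_{k+1}-t_k)+2\delta\le 1+2\delta\le\sigma$, so the cop can indeed move to $c_{k+1}=m_{k+1}$. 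Third, the capture test at stage $k+1$ reads $\dist(r_k,c_{k+1})=\dist(v,r_k)-t_{k+1}$: if this is at most $\rho$ the robber is captured, and otherwise $\dist(v,r_k)>t_{k+1}+\rho$, which is exactly the invariant at stage $k+1$. The first stage is the same computation with $c_0=v$. Iterating, either the robber is captured at some stage, or for every $k\ge R$ we have $t_k=R$ and $\dist(v,r_k)\ge\dist(v,r_{k-1})-\psi>R+\rho-\psi>R$; in both cases the cop has protected $B_R(v)$. Since $\sigma$ was chosen before $\psi$ and $\rho$ only after $\psi$ but before $R$, this establishes that $\Gamma$ is $1$-strong cop win.

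The step I expect to be the real obstacle is the simultaneous calibration of $\sigma$ and $\rho$: the reach $\rho$ must be large enough, relative to $\psi$, that the robber can never slip ``downhill'' past the cop without entering the radius-$\rho$ ball around $c_{k+1}$ (this forces $\rho\gtrsim\psi$), while the cop's climbing move must always stay within its fixed speed (this is where $\delta$-hyperbolicity enters, via fellow-travelling of geodesics with a common endpoint and $\psi$-close far endpoints), and $\sigma$ is not allowed to depend on $\psi$. One also has to check the first stage and the degenerate range of small $R$ — in particular $R<\rho$, where the cop essentially never needs to leave $v$ — but these reduce to the same estimates.
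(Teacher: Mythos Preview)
Your argument is correct and follows essentially the same strategy as the paper's: the cop stays on a geodesic from the basepoint $v$ to the robber's last position and advances steadily outward, using the $\delta$-slimness of the triangle on $v,r_{k-1},r_k$ to transfer between successive geodesics at cost $O(\delta)$. Your version is in fact slightly cleaner, since you rule out $m_k$ being $\delta$-close to the short side $[r_{k-1},r_k]$ directly from $\dist(m_k,r_{k-1})>\rho>\psi+\delta$ rather than via the paper's Lemma~\ref{lemmajf1} on rectifiable paths, which lets you take the smaller reach $\rho=\psi+O(\delta)$ instead of the paper's $\psi+O(\delta\log\psi)$.
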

The proof of the theorem relies on the following result.
\begin{lemma}\label{lemmajf1}\cite[Chapter III.H, Proposition 1.6]{brid}
Let $X$ be a $\delta$-hyperbolic geodesic space, let $C$ be a continuous rectifiable path in $X$, if $[p,q]$ is a geodesic segment connecting the endpoints of $C$, then for all $x\in [p,q]$:
\[ \dist(x,C)\leq \delta |\log_2(l(C))|+1,\]
with $l(C)$ the length of $C$.
\end{lemma}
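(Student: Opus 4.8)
\section*{Proof proposal}

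The plan is to prove the lemma by a dyadic subdivision of $C$ combined with repeated applications of the $\delta$-slim condition; this is the classical ``logarithmic'' estimate, and the argument is self-contained given only the definition of $\delta$-hyperbolicity. First reduce to the case $l(C)\geq 1$: if $l(C)<1$, then the geodesic $[p,q]$ has length $\dist(p,q)\leq l(C)<1$, so every $x\in[p,q]$ satisfies $\dist(x,C)\leq\dist(x,p)\leq\dist(p,q)<1\leq \delta|\log_2 l(C)|+1$, and there is nothing to prove (the constant path is trivial). So assume $l=l(C)\geq 1$ and parametrize $C$ proportionally to arc length as $c\colon[0,1]\to X$ with $c(0)=p$, $c(1)=q$, so that the subpath $c|_{[s,t]}$ has length $(t-s)\,l$.

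The heart of the argument is a single ``halving'' move. Suppose at some stage we have a point $y$ lying on a geodesic $[c(a),c(b)]$ whose endpoints are on $C$ and with $b-a=2^{-k}$. Let $m=(a+b)/2$ and form the geodesic triangle with vertices $c(a)$, $c(m)$, $c(b)$. By $\delta$-slimness the side $[c(a),c(b)]$ lies in the $\delta$-neighborhood of $[c(a),c(m)]\cup[c(m),c(b)]$, so there is a point $y'$ on one of these two geodesics with $\dist(y,y')\leq\delta$; this $y'$ lies on a geodesic between two points of $C$ whose parameter interval has length $2^{-(k+1)}$. Starting from $x\in[p,q]=[c(0),c(1)]$ and iterating this move $K$ times produces a chain $x=x_0,x_1,\dots,x_K$ with $\dist(x_i,x_{i+1})\leq\delta$, where $x_K$ lies on a geodesic $[c(a_K),c(b_K)]$ with $b_K-a_K=2^{-K}$ (a nested sequence of dyadic subintervals). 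Since a geodesic is shortest, $\dist(c(a_K),c(b_K))\leq 2^{-K}l$, and the endpoint of this geodesic nearest to $x_K$ is within $\tfrac{1}{2}\cdot 2^{-K}l = 2^{-(K+1)}l$ of $x_K$; as that endpoint lies on $C$, we obtain $\dist(x_K,C)\leq 2^{-(K+1)}l$. The triangle inequality then yields
\[ \dist(x,C)\leq K\delta + 2^{-(K+1)}l. \]

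Finally, choose $K$ to be the least nonnegative integer with $2^{-(K+1)}l\leq 1$. One checks directly that this forces $K\leq \log_2 l$ (when $l\leq 2$ one takes $K=0$, and otherwise $K=\lceil\log_2 l\rceil-1\leq\log_2 l$). The displayed bound then becomes $\dist(x,C)\leq \delta\log_2 l + 1 = \delta|\log_2 l|+1$, as required. The step I expect to need the most care is the bookkeeping of the constant: a careless choice — stopping once the subarc has length at most $1$ and bounding $\dist(x_K,C)$ by $1$ rather than by $2^{-(K+1)}l$ — yields only the weaker bound $\delta\lceil\log_2 l\rceil + 1$, and extracting the stated $\delta\log_2 l + 1$ requires both the sharper terminal estimate and the optimal choice of $K$. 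The only other technical point is the arc-length reparametrization of the rectifiable path $C$, which is routine.
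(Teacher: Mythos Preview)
The paper does not give its own proof of this lemma; it is stated with a citation to \cite[Chapter III.H, Proposition 1.6]{brid} and used as a black box in the proof of Theorem~\ref{hypcopwin1}. Your argument is correct and is precisely the standard dyadic-subdivision proof found in that reference: iterate the $\delta$-slim condition on bisected triangles $K$ times to reach a short geodesic chord of $C$, then optimize $K$. Your handling of the constants (the terminal estimate $\dist(x_K,C)\leq 2^{-(K+1)}l$ and the choice $K=\max\{0,\lceil\log_2 l\rceil-1\}$) is exactly what is needed to land on $\delta\log_2 l(C)+1$ rather than the cruder $\delta\lceil\log_2 l(C)\rceil+1$, and the reduction for $l(C)<1$ is fine.
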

In a graph, an edge path is always a continuous rectifiable path with respect to the path metric.   

\begin{proof}[Proof of Theorem \ref{hypcopwin1}]
Let $\Gamma$ be a $\delta$-hyperbolic graph and, by enlarging $\delta$ if necessary, assume that $\delta$ is an integer. Consider the game on $\Gamma$ with a robber with arbitrary speed $\psi$ and a single cop with speed $\sigma=2\delta+1$ and reach $\rho=\delta |\log_2(\psi)|+\delta + \psi+1$. Observe that the reach of the cop depends on the robber's speed.
Let $u_0$ be an arbitrary vertex of $\Gamma$ and let $R>0$. We will prove that the cop can protect the closed ball ${B_R}(u_0)$. Assume the initial location of the cop is $c_0=u_0$ and the initial position of the robber is $r_0$ and let $g_1$ be a geodesic path from $u_0$ to $r_0$. At the beginning of the $(n+1)$-stage, the position of the cop is denoted by $c_{n}$ and the position of the robber is $r_{n}$. We describe a strategy for the cop that guarantees the following two conditions for every $n$:
\begin{enumerate}
\item \label{Condition01} If after the cop moves on the $(n+1)$-stage, the robber has not been captured; then $c_{n+1}$ is a vertex of a geodesic, denoted by  $g_n$, from $u_0$ to $r_{n}$,
\item \label{Condition02} $\dist(u_0,c_n) < \dist(u_0,c_{n+1})$. \end{enumerate}
\begin{figure}[ht]
\centering
\begin{tikzpicture}[scale=1.3]
\draw[black,thick] (0,1) to [out=-45,in=90] (2,-3);
\draw[black,thick] (4,1) to [out=225,in=90] (2,-3);
\draw[black,thick] (0,1) to [out=-45,in=225] (4,1);
\draw[thick,black!70] plot [smooth,tension=1.1] coordinates{(0,1) (0.4,1.5) (1,1.3) (1.5,1.7) (2,1.5) (2.7,1.1) (3.1,1.4) (3.5,1.5) (4,1)};
\foreach \X in {(0,1), (4,1) , (2,-3), (1.6,-1.1)}
\filldraw[black] \X circle (2pt);
\node at (-0.4,1) {$r_{n}$};
\node at (4.6,1) {$r_{n+1}$};
\node at (2.4,-3) {$u_0$};
\node at (2.7,-1.7) {$g_{n+1}$};
\node at (1.5,-1.7) {$g_{n}$};
\node at (2.2,0.4) {$g$};
\node at (2.5,1.5) {$C$};
\node at (1.1,-1) {$c_{n+1}$};
\end{tikzpicture}
\caption{Proof of Theorem~\ref{hypcopwin1} }
\label{fig:deltriangle}
\end{figure}
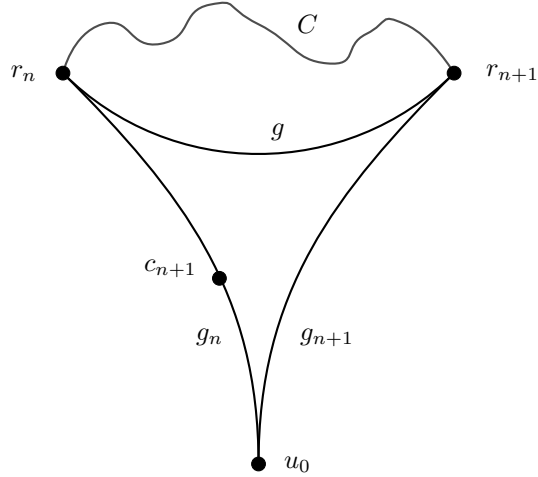
Let us recall that at the beginning of every stage, the cop moves first. The strategy for the cop is defined below:
\begin{itemize}
\item In the first stage, the cop moves in the direction of the robber along $g_1$.
\item If at the beginning of the $(n+2)$-stage the robber has not been captured, then the movement of the cop is described as follows.
Consider a $\delta$-slim triangle with vertices $r_{n}$, $r_{n+1}$ and $u_0$ and with sides $g_{n}$, $g_{n+1}$ and $g$, where $g$ is defined as a geodesic path between $r_{n}$ and $r_{n+1}$. We assume that $c_{n+1}$ is a vertex of the geodesic $g_n$. See Figure~\ref{fig:deltriangle}.
The assumption that the robber was not captured during the $(n+1)$-stage  implies that $\dist(c_{n+1},g)> \delta$. Indeed, suppose that $\dist(c_{n+1},g)\leq \delta$. Then there exists a vertex $x$ in the geodesic $g$ such that $\dist(c_{n+1},x) \leq \delta$. Suppose that the robber moved from $r_n$ to $r_{n+1}$ along the path $C$, and hence its length satisfies $l(C)\leq \psi$. Lemma \ref{lemmajf1} implies that \[\dist(c_{n+1},C)\leq \dist(c_{n+1},x)+\dist(x,C)\leq \delta +\delta |\log_2(l(C))|+1 \leq \rho, \]
and therefore the robber was captured. This contradicts our hypothesis and therefore $\dist(c_{n+1},g)>\delta$.
Since the triangle is $\delta$-slim, there is a vertex $y$ on $g_{n+1}$ at a distance less than or equal to $\delta$ from $c_{n+1}$. The cop will move first to $y$. If the robber has not been captured yet, then $\dist(y,r_{n+1})>\rho>\delta+1$ and the cop moves $\delta+1$ units along the geodesic $g_{n+1}$ in the direction of $r_{n+1}$. Note that the cop can do this total movement due to the assumption that $2\delta+1\leq \sigma$. Moreover,
\begin{align*}
\dist(u_0,c_{n+2}) &=
\dist(u_0,y) +\delta+1 \\
&\geq \dist (u_0,c_{n+1})- \dist(c_{n+1},y)+\delta+1\\
&\geq \dist (u_0,c_{n+1}) -\delta + \delta +1 \\
&> \dist (u_0,c_{n+1}).
\end{align*}
\end{itemize}
Let us verify that this is a winning strategy for the cop.
Assume the cop follows the strategy and the robber is never captured. After the cop moves, on the $(n+1)$-stage, $\dist(u_0,r_n)= \dist(u_0,c_{n+1})+\dist(c_{n+1},r_n)$ due to condition~\eqref{Condition01}. In particular $\dist(u_0,r_n)\geq \dist(u_0,c_{n+1})$. After a finite number of steps, let us say $m$, the cop will be located at a distance at least $R$ from $u_0$, this is $\dist(u_0,c_{n})\geq R$ for every $n\geq m$, due to condition~\eqref{Condition02}. It follows that $\dist(u_0,r_n)\geq R$ for any $n\geq m$, that means the cop has been able to protect the ball.
\end{proof}
\section{Cop numbers and quasi-retractions}\label{sec:QuasiRetractions}
Let $X$ and $Y$ be metric spaces. For constants $C\geq 1$ and $D\geq 0$, a function $f\colon X \to Y$ is \textit{$(C,D)$-Lipschitz} if for any $x_1,x_2 \in X$, $\dist_Y(f(x_1),f(x_2))\leq C\dist_X (x_1,x_2)+D$. We say that $X$ is a \emph{quasi-retract} of $Y$ if there exist constants $C\geq 1$ and $D\geq0$ and $(C,D)$-Lipschitz functions $f\colon X \to Y$ and $g\colon Y \to X$ such that:
\[\dist_X(g(f(x)),x)\leq D, \]
for any $x\in X$. The pair $(f,g)$ is called a \emph{quasi-retraction} of $Y$ to $X$. If $X$ is a quasi-retract of $Y$, then we say that \emph{$Y$ quasi-retracts to $X$}.

A graph $\Delta$ is a \emph{quasi-retract} of a graph $\Gamma$ if $V(\Delta)$ is a quasi-retract of $V(\Gamma)$ as metric spaces with respect to the path distance.
\begin{example}
Let $\Gamma$, $\Delta$ be two connected graphs.
\begin{enumerate}
\item If $\Delta$ is finite, then $\Delta$ is a quasi-retract of $\Gamma$.
\item Recall that if $\Delta$ is a subgraph of $\Gamma$, then a graph retraction $r
\colon 
\Gamma \to \Delta$ is a function between the vertex sets that maps adjacent vertices to adjacent vertices and that it restricts to the identity on the vertex set of $\Delta$.  A graph retraction $r\colon \Gamma \to \Delta$ induces a quasi-retraction $(\imath, r)$ of $\Gamma$ to $\Delta$ where $\imath\colon \Delta \to \Gamma$ is the inclusion. However quasi-retractions into subgraphs are not necessarily graph retractions. For example, for the graph $\Gamma$ in Figure~\ref{fig:propo212}, if $\Delta$ is the subgraph induced by the vertices $a,b$, then there is no retract $\Gamma \to \Delta$; however, there is a quasi-retraction $(f,g)$ of $\Gamma$ into $\Delta$ where $g\colon V(\Gamma) \to V(\Delta)$ is given by $g(x)=b$ if $x\neq a$ and $g(a)=a$ and $f\colon \{a,b\} \to V(\Gamma)$ is the inclusion.
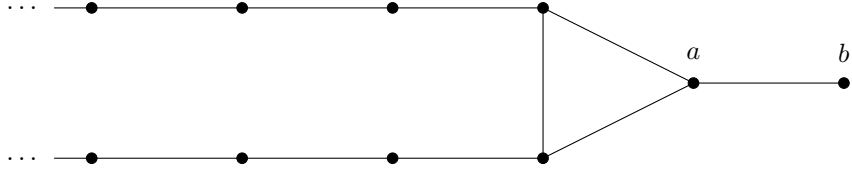
\begin{figure}[ht]
	\centering
\begin{tikzpicture}[scale=1]
\foreach \Point in {(2,1),(0,1),(-2,0),(-2,2),(-4,0),(-6,0),(-8,0),(-4,2),(-6,2),(-8,2)
}{
\filldraw \Point circle (2pt);
}
\draw (0,1)--(-2,0)--(-2,2)--cycle;
\draw (2,1)--(0,1);
\draw (-2,0)--(-8.5,0);
\draw (-2,2)--(-8.5,2);
\node at (-8.9,2) {$\dots$};
\node at (-8.9,0) {$\dots$};
\node at (2,1.4) {$b$};
\node at (0,1.4) {$a$};
		\end{tikzpicture}
	\caption{This graph does not retract to the subgraph induced by $\{a,b\}$, but it does quasi-retract.}
	\label{fig:propo212}
\end{figure}
\item \label{ProdQR} If $\Lambda$ denotes the Cartesian product $\sq{\Gamma}{\Delta}$, the strong product $\xsq{\Gamma}{\Delta}$, or the rooted product $\cir{\Gamma}{\Delta}{y}$; then $\Lambda$ quasi-retracts to $\Gamma$ and also to $\Delta$. The lexicographic product $\lx{\Gamma}{\Delta}$ quasi-retracts to $\Gamma$, but not necessarily to $\Delta$.

These statements have similar proofs. Let us illustrate the case of the Cartesian product. Recall that $\sq{\Gamma}{\Delta}$ is the graph with vertex set $V(\Gamma)\times V(\Delta)$ and such that vertices $(u,v)$ and $(u',v')$ are adjacent if either $u=u'$ and $v$ and $v'$ are adjacent in $\Delta$, or $v=v'$ and $u$ and $u'$ are adjacent in $\Gamma$.
Choose a vertex $v_0\in V(\Delta)$ and observe that the inclusion
$f\colon V(\Gamma) \hookrightarrow V(\sq{\Gamma}{\Delta})$ defined by $u\mapsto (u,v_0)$ preserves path distance, that is, $\dist_\Gamma(u,u')=\dist_{\Gamma\square\Delta}((u,v_0),(u',v_0))$. On the other hand, the projection $g\colon V(\sq{\Gamma}{\Delta}) \to V(\Gamma)$ defined by $(u,v)\mapsto u$ does not increase path distances, that is, if there is a  path of length $n$ in $\sq{\Gamma}{\Delta}$ between the vertices $(u,v)$ and $(u',v')$, then there is a path between $u$ and $u'$ in $\Gamma$ of length at most $n$. Hence both $f$ and $g$ are $(1,0)$-Lipschitz maps, and since $g\circ f$ is the identity map, it follows that $\sq{\Gamma}{\Delta}$ quasi-retracts to $\Gamma$.
\end{enumerate}
\end{example}
\begin{theorem}\label{corocopN1}
Let $\Gamma$ and $\Delta$ be connected graphs. If $\Delta$ is a quasi-retract of $\Gamma$, then
\[\wcop(\Delta) \leq \wcop(\Gamma) \quad \text{and} \quad \scop(\Delta) \leq \scop(\Gamma).\]
\end{theorem}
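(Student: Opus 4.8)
The plan is to transport a winning cop strategy from $\Gamma$ to $\Delta$ by a simulation argument: the cops on $\Delta$ run a shadow game on $\Gamma$ and copy its moves through the quasi-retraction. Fix a quasi-retraction $(f,g)$ of $\Gamma$ to $\Delta$; after enlarging constants we may assume $f\colon V(\Delta)\to V(\Gamma)$ and $g\colon V(\Gamma)\to V(\Delta)$ are $(C,D)$-Lipschitz with integers $C\geq 1$, $D\geq 0$, and $\dist_\Delta(g(f(x)),x)\leq D$ for all $x\in V(\Delta)$. Assume $\Gamma$ is $n$-weak cop win (the strong case differs only in the quantifier bookkeeping noted at the end). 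Given a target vertex $v\in V(\Delta)$ and a radius $R_\Delta$, the cops on $\Delta$ set up a shadow game on $\Gamma$ with target vertex $f(v)$: the shadow cops follow the winning $\Gamma$-strategy from initial positions $\tilde c^i_0$, the real cops sit at $g(\tilde c^i_0)$, and whenever the shadow cops move $\tilde c^i_{k-1}\to\tilde c^i_k$ the real cops move $g(\tilde c^i_{k-1})\to g(\tilde c^i_k)$. In the other direction, the shadow robber occupies $\tilde r_k=f(r^\Delta_k)$, and whenever the real robber walks an edge path $w_0,\dots,w_\ell$ in $\Delta$ the shadow robber walks the $\Gamma$-path obtained by joining consecutive $f(w_j)$ by geodesics, each of length at most $C+D$.

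Next I would calibrate the $\Gamma$-parameters so that legality and safety transfer across the simulation. A robber move of length at most $\psi_\Delta$ lifts to a shadow move of length at most $(C+D)\psi_\Delta$, so set $\psi_\Gamma=(C+D)\psi_\Delta$; a shadow cop step of length at most $\sigma_\Gamma$ projects to a real step of length at most $C\sigma_\Gamma+D$, so set $\sigma_\Delta=C\sigma_\Gamma+D$. Since $\dist_\Gamma(f(v),f(r^\Delta_k))\leq C\dist_\Delta(v,r^\Delta_k)+D$, taking $R_\Gamma=CR_\Delta+D+1$ ensures that $\dist_\Gamma(f(v),\tilde r_k)>R_\Gamma$ implies $\dist_\Delta(v,r^\Delta_k)>R_\Delta$. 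The delicate choice is $\rho_\Delta$: it must be large enough that a vertex of the real robber's path at distance $>\rho_\Delta$ from the real cops maps under $f$ far enough from the shadow cops for the interpolating geodesic vertices, which can wander up to $C+D$ away, to stay at distance $>\rho_\Gamma$; and small enough that a shadow capture $\dist_\Gamma(\tilde r_{k-1},\tilde c^i_k)\leq\rho_\Gamma$ forces the real capture $\dist_\Delta(r^\Delta_{k-1},g(\tilde c^i_k))\leq\rho_\Delta$. Both requirements reduce to the estimate $\dist_\Delta(w,g(\tilde c))\leq 2D+C\dist_\Gamma(f(w),\tilde c)$, and one checks that $\rho_\Delta=C\rho_\Gamma+C^2+CD+2D$ works.

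Finally I would verify that the simulation is consistent and that the outcome translates back. The shadow robber always has a legal reply: if the real robber has no safe path it must stay put, and then so does the shadow robber, whose current vertex is safe precisely because the real robber was not captured; thus the shadow robber is a legitimate opponent for the winning $\Gamma$-strategy. Consequently, in the shadow game either the shadow robber is captured at some stage --- forcing a real capture at the same stage by the choice of $\rho_\Delta$ --- or $\dist_\Gamma(f(v),\tilde r_k)>R_\Gamma$ for all large $k$ --- forcing $\dist_\Delta(v,r^\Delta_k)>R_\Delta$ for all large $k$ by the choice of $R_\Gamma$. In both cases the real cops protect $B_{R_\Delta}(v)$, so $\Delta$ is $\mathsf{CopWin}(n,\sigma_\Delta,\rho_\Delta,\psi_\Delta,R_\Delta)$. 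In the weak case $\sigma_\Delta$ and $\rho_\Delta$ depend only on $\sigma_\Gamma$, $\rho_\Gamma$ and the quasi-retraction constants, and are fixed before the robber reveals $\psi_\Delta,R_\Delta$, so $\Delta$ is $n$-weak cop win; in the strong case $\rho_\Delta$ is computed from $\rho_\Gamma$ after $\psi_\Delta$ (hence $\psi_\Gamma$) is known but before $R_\Delta$, so $\Delta$ is $n$-strong cop win. This gives $\wcop(\Delta)\leq\wcop(\Gamma)$ and $\scop(\Delta)\leq\scop(\Gamma)$. I expect the main obstacle to be exactly the tuning of $\rho_\Delta$: it must simultaneously absorb the additive defect of the quasi-retraction and the drift of the interpolating geodesics, and the chosen value must land in the correct slot of each invariant's quantifier alternation.
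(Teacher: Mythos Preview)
Your proposal is correct and follows essentially the same simulation argument as the paper: a shadow game on $\Gamma$ with cops at $\tilde c^i_k$ and robber at $f(r^\Delta_k)$, real cops placed at $g(\tilde c^i_k)$, and the robber's edge-path lifted via concatenated geodesics between the $f(w_j)$. Your parameter calibrations match the paper's Proposition~\ref{lemaqrCN} up to inessential constants (the paper takes $\rho_\Delta=C\rho_\Gamma+C^2+CD+2D+C$ and $R_\Gamma=CR_\Delta+D$, but your slightly tighter choices work for the same reasons), and your handling of the weak/strong quantifier bookkeeping is the same as the paper's concluding paragraph.
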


The objective of the rest of the section is to prove the above theorem.

\subsection{Convention on the notation for $\mathsf{CopWin}$.}\label{subsec:copwin-notation}
Let us introduce some notation that is used in the next sections. We also include a lemma for future reference.
Recall from the introduction that a graph $\Gamma$ is $\mathsf{CopWin}(n,\sigma,\rho,\psi,R)$ if for every vertex $v$,
$n$ cops with speed $\sigma$ and reach $\rho$ can eventually protect the $R$-ball centered at $v$. The point is that $\mathsf{CopWin}$ has {\bf five parameters} and its definition has a universal quantifier on the set of vertices of $\Gamma$.

Abusing notation, we use $\mathsf{CopWin}$ with {\bf six parameters} in the following sense.  If $v$ is a vertex of a graph $\Gamma$, we say \emph{$\Gamma$ is $\mathsf{CopWin}(n,\sigma,\rho,\psi, v, R)$} if $n$ cops with speed $\sigma$ and reach $\rho$ can eventually protect the $R$-ball centered at $v$ from a robber with speed $\psi$. In particular, 
\[  \text{$\Gamma$ is } \mathsf{CopWin}(n,\sigma,\rho,\psi, R)  \Longleftrightarrow \forall\ v:\ \text{$\Gamma$ is } \mathsf{CopWin}(n,\sigma,\rho,\psi,v, R).\]
With this notation,
the definitions of $n$-weak cop win and $n$-strong cop win can be re-stated as follows:
\begin{itemize}
\item \( \text{$\Gamma$ is $n$-weak cop win} \Longleftrightarrow \exists\ \sigma, \rho \ \forall\ \psi, R, v: \ \text{$\Gamma$ is } \mathsf{CopWin}(n,\sigma,\rho,\psi,v, R). \)
\item \( \text{$\Gamma$ is $n$-strong cop win} \Longleftrightarrow \exists\ \sigma, \forall\ \psi\ \exists\ \rho\ \forall\ R, v: \ \text{$\Gamma$ is } \mathsf{CopWin}(n,\sigma,\rho,\psi,v, R). \)
\end{itemize}
The following lemma eases the computation of cop numbers by allowing us to consider only balls around a single vertex instead of considering balls around arbitrary vertices.
\begin{lemma}
Let $\Gamma$ be a connected graph. The following statements are equivalent.
\begin{itemize}
\item \( \text{$\Gamma$ is $n$-weak cop win} \Longleftrightarrow \exists\ \sigma, \rho, v \ \forall\ \psi, R \ \colon \text{$\Gamma$ is } \mathsf{CopWin}(n,\sigma,\rho,\psi,v, R). \)
\item \( \text{$\Gamma$ is $n$-strong cop win} \Longleftrightarrow \exists\ \sigma, v\ \forall\ \psi\ \exists\ \rho\ \forall\ R \ \colon \text{$\Gamma$ is } \mathsf{CopWin}(n,\sigma,\rho,\psi,v, R). \)
\end{itemize}
\end{lemma}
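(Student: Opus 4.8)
The plan is to reduce everything to the restatements of $n$-weak cop win and $n$-strong cop win given just above (the versions with a universal quantifier on the vertex $v$), and then to invoke one elementary geometric observation: if $v,w$ are vertices of a connected graph and $D=\dist(v,w)$, then $B_R(w)\subseteq B_{R+D}(v)$, so a cop team able to protect $B_{R+D}(v)$ can protect $B_R(w)$ simply by running the former strategy.

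\textbf{Step 1: the ball-containment claim.} I would first isolate the following: for all vertices $v,w$ of $\Gamma$, writing $D=\dist(v,w)$, and for any parameters $n,\sigma,\rho,\psi,R$, if $\Gamma$ is $\mathsf{CopWin}(n,\sigma,\rho,\psi,v,R+D)$, then $\Gamma$ is $\mathsf{CopWin}(n,\sigma,\rho,\psi,w,R)$. To prove this, the cops (who by definition of the game may let their strategy depend on the vertex and radius to be protected) adopt their winning strategy for protecting $B_{R+D}(v)$. By definition of winning, either the robber is eventually captured, or there is $N>0$ with $\dist(v,r_k)>R+D$ for all $k\geq N$; in the latter case the triangle inequality gives $\dist(w,r_k)\geq \dist(v,r_k)-D>R$ for all $k\geq N$. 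Either way the $R$-ball around $w$ is protected. Connectedness of $\Gamma$ is used precisely to ensure $D<\infty$.

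\textbf{Step 2: the weak case.} The implication ``$\Rightarrow$'' is immediate from the restated definition: if $\Gamma$ is $n$-weak cop win, pick $\sigma,\rho$ witnessing $\forall\,\psi,R,v\colon \mathsf{CopWin}(n,\sigma,\rho,\psi,v,R)$ and then fix any single vertex $v$. For ``$\Leftarrow$'', suppose $\sigma,\rho,v$ witness the right-hand side. Keeping the same $\sigma,\rho$, take arbitrary $\psi,R$ and an arbitrary vertex $w$; set $D=\dist(v,w)$, apply the hypothesis with radius $R+D$ to get $\mathsf{CopWin}(n,\sigma,\rho,\psi,v,R+D)$, and conclude $\mathsf{CopWin}(n,\sigma,\rho,\psi,w,R)$ by Step 1. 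Thus $\exists\,\sigma,\rho\,\forall\,\psi,R,w\colon \mathsf{CopWin}(n,\sigma,\rho,\psi,w,R)$, which is exactly the restated definition of $n$-weak cop win.

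\textbf{Step 3: the strong case.} This is the same argument, with attention to the order of quantifiers since $\rho$ is chosen after $\psi$. The implication ``$\Rightarrow$'' again just fixes a single vertex. For ``$\Leftarrow$'', suppose $\sigma,v$ witness the right-hand side, so for each $\psi$ there is $\rho_\psi$ with $\mathsf{CopWin}(n,\sigma,\rho_\psi,\psi,v,R)$ for all $R$. Keeping this $\sigma$, and for each $\psi$ keeping this same $\rho_\psi$, take arbitrary $R$ and vertex $w$, set $D=\dist(v,w)$, apply the hypothesis with radius $R+D$ and then Step 1 to obtain $\mathsf{CopWin}(n,\sigma,\rho_\psi,\psi,w,R)$. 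Since $D$ depends only on $v$ and $w$ and not on $\rho_\psi$, the quantifier order $\exists\,\sigma\,\forall\,\psi\,\exists\,\rho\,\forall\,R,w$ is respected, and this is the restated definition of $n$-strong cop win. There is no genuine obstacle in this proof; the only non-formal content is the ball-containment claim of Step 1, and the only point requiring care is verifying in Step 3 that the $\psi$-to-$\rho$ dependency is not disturbed by the extra parameters $R$ and $w$.
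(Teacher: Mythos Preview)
Your proof is correct and follows essentially the same approach as the paper: both arguments rest on the single observation that $B_R(w)\subseteq B_{R+\dist(v,w)}(v)$, so a strategy protecting the larger ball around $v$ automatically protects the smaller ball around $w$. Your write-up is more explicit than the paper's (which dispatches the strong case with ``analogous''), but the content is the same.
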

\begin{proof}
Let us consider the first equivalence. The only if direction (left to right) is trivial. For the if direction, let $\sigma$, $\rho$ and $v$ satisfy  the statement. Let $\psi$ and $R$ be integers, and $u\in V(\Gamma)$. Let $r=\dist_\Gamma(u,v)$ and note that $\Gamma$ being connected implies $r<\infty$. By hypothesis, $\Gamma$ is $\mathsf{CopWin}(n,\sigma,\rho,\psi,v, R+r)$. Since the ball of radius $R+r$ around $v$ contains the ball of radius $R$ around $u$, it follows that $\Gamma$ is $\mathsf{CopWin}(n,\sigma,\rho,\psi,u, R)$. Since $u$ and $R$ were arbitrary, it follows that $\Gamma$ is $n$-weak cop win. The proof of the second equivalence is analogous.
\end{proof}
\subsection{Proof of Theorem~\ref{corocopN1}}
The theorem will follow from the proposition below.
\begin{proposition}\label{lemaqrCN}
Let $\Gamma$ and $\Delta$ be connected graphs, and let $(f, g)$ be a quasi-retraction with constants $(C,D)$ from $\Gamma$ to $\Delta$.
\noindent If $\Gamma$ is $\mathsf{CopWin}(n,\sigma,\rho,\psi,f(u_0), R)$, then $\Delta$ is $\mathsf{CopWin}(n,\sigma_\Delta,\rho_\Delta,\psi_\Delta, u_0,R_\Delta)$ cop win, where:
\[ \sigma_\Delta=C \sigma + D, \quad \rho_\Delta = C \rho + C^2 + CD +2D+C, \quad R=CR_{\Delta}+D,\quad \psi=(C+D)\psi_\Delta.\]
\end{proposition}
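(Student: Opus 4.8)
The plan is to transfer a winning strategy for the cops on $\Gamma$ to a winning strategy on $\Delta$ by conjugating positions through the quasi-retraction $(f,g)$. The robber on $\Delta$ will be ``simulated'' as a robber on $\Gamma$ via the map $g$ (in the opposite direction), the cops on $\Gamma$ respond using their winning strategy, and the cops on $\Delta$ copy the $\Gamma$-cops' positions by pushing them through $f$. The whole point of the inflated parameters $\sigma_\Delta,\rho_\Delta,R_\Delta,\psi_\Delta$ is to absorb the Lipschitz distortion $(C,D)$ incurred at each translation step; so the proof is really a bookkeeping argument and the substance is choosing the dictionary correctly and checking that each move is legal.

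More precisely, suppose $n$ cops with speed $\sigma$ and reach $\rho$ can protect the ball $B_R(f(u_0))$ in $\Gamma$, and consider a game on $\Delta$ with $n$ cops of speed $\sigma_\Delta$ and reach $\rho_\Delta$, robber speed $\psi_\Delta$, target ball $B_{R_\Delta}(u_0)$. First I would set up the correspondence: whenever the robber occupies $r\in V(\Delta)$, declare the ``shadow robber'' on $\Gamma$ to be at $f(r)$; whenever the $\Gamma$-cops' strategy dictates a position $c^i\in V(\Gamma)$, the real cop $i$ on $\Delta$ moves to $g(c^i)$. A $\Delta$-cop move of one stage corresponds to a $\Gamma$-cop move of one stage, and I need $\dist_\Delta(g(c^i_{k-1}),g(c^i_k))\le \sigma_\Delta$, which follows from $\dist_\Gamma(c^i_{k-1},c^i_k)\le\sigma$ and $g$ being $(C,D)$-Lipschitz, giving the bound $C\sigma+D=\sigma_\Delta$. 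Similarly, a robber move in $\Delta$ along a path of length $\le\psi_\Delta$ projects under $f$ to a walk in $\Gamma$ of length $\le (C\cdot 1+D)\psi_\Delta$ (each edge stretches to a path of length $\le C+D$), so the shadow robber has speed at most $\psi=(C+D)\psi_\Delta$ — matching the stated relation $\psi=(C+D)\psi_\Delta$.

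The key step is the capture/evasion bookkeeping, i.e. relating reach $\rho_\Delta$ on $\Delta$ to reach $\rho$ on $\Gamma$, and here one must be careful because the game rules forbid the robber from moving \emph{through} the reach-neighborhood of a cop, not merely landing in it. I would argue in two directions. (i) If the real cop $i$ on $\Delta$ ever gets within $\rho_\Delta$ of the real robber, I must conclude the shadow cop on $\Gamma$ is within $\rho$ of the shadow robber (so the $\Gamma$-strategy is being applied to a legitimate game and eventually wins); conversely, (ii) if during the shadow robber's move the $\Gamma$-cop comes within $\rho$ of some vertex of the shadow path, then I want the corresponding vertex of the actual $\Delta$-path to have been within $\rho_\Delta$ of the actual cop, so that an ``illegal'' robber move in $\Delta$ would be illegal in $\Gamma$ too — meaning the robber on $\Delta$ can only do things the shadow robber could legally do. Both directions are triangle-inequality estimates: for a vertex $x$ on the $\Delta$-path and the cop at $g(c^i)$, one has $\dist_\Gamma(f(x),c^i)\le C\,\dist_\Delta(x,g(c^i))+D$ (using the quasi-retraction relation $\dist_\Delta(g f(x),x)\le D$ and the Lipschitz bounds on both $f$ and $g$), and unwinding this is exactly what forces $\rho_\Delta=C\rho+C^2+CD+2D+C$. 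Finally, once the $\Gamma$-cops have confined the shadow robber to the exterior of $B_R(f(u_0))$ (or captured it), I pull back: $\dist_\Gamma(f(r_k),f(u_0))>R$ together with $\dist_\Delta(u_0,r_k)\ge \tfrac{1}{C}(\dist_\Gamma(f(u_0),f(r_k))-D)$ (because $g f$ moves points at most $D$ and $g$ is $(C,D)$-Lipschitz) gives $\dist_\Delta(u_0,r_k)>R_\Delta$ once $R=CR_\Delta+D$, so the $\Delta$-cops protect $B_{R_\Delta}(u_0)$; and a capture in $\Gamma$ forces a capture in $\Delta$ by direction (i). The main obstacle, and the place to be most careful, is direction (ii): ensuring that the translation of the robber's motion does not accidentally grant the shadow robber an illegal move in $\Gamma$, since otherwise the $\Gamma$-strategy would not apply. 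The slack built into $\rho_\Delta$ is precisely tuned so that any vertex the real robber passes through that is genuinely safe (distance $>\rho_\Delta$ from all real cops) maps to a vertex genuinely safe (distance $>\rho$ from all shadow cops) in $\Gamma$.

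Once the proposition is established, Theorem~\ref{corocopN1} follows quickly: given $\Gamma$ is $n$-weak cop win via $(\sigma,\rho)$, pick $u_0$ arbitrary in $\Delta$; the proposition produces $(\sigma_\Delta,\rho_\Delta)$ that work against every $\psi_\Delta, R_\Delta$ at the vertex $u_0$, and by the single-vertex Lemma above this suffices for $\Delta$ to be $n$-weak cop win, whence $\wcop(\Delta)\le\wcop(\Gamma)$. For the strong cop number, the quantifier order ($\exists\sigma\,\forall\psi\,\exists\rho\,\forall R$) is respected because in the proposition $\sigma_\Delta$ depends only on $\sigma$ (and the fixed constants $C,D$), while $\rho_\Delta$ is allowed to depend on $\rho$, which for the strong game depends on $\psi$ — hence $\scop(\Delta)\le\scop(\Gamma)$ by the same single-vertex reduction.
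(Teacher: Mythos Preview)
Your approach is the same as the paper's: run a parallel game on $\Gamma$ with the shadow robber at $f(r)$ and the $\Delta$-cops at $g(c^i)$, then verify all the Lipschitz bookkeeping. Your treatment of cop speed, robber speed, the path-legality check (your direction (ii)), and the ball-exclusion estimate all match the paper's computations.

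One slip: your direction (i) is stated backwards and is in fact neither needed nor provable as written, since the quasi-retraction only controls $\dist_\Delta(g(f(x)),x)$ and says nothing about $f\circ g$. What you actually need --- and what you invoke at the end when you say ``a capture in $\Gamma$ forces a capture in $\Delta$'' --- is the reverse implication: if $\dist_\Gamma(c^\Gamma_{i}, f(r))\le \sigma+\rho$ in $\Gamma$, then $\dist_\Delta(g(c^\Gamma_{i}),r)\le C(\sigma+\rho)+2D<\sigma_\Delta+\rho_\Delta$, so the real robber is captured at the next stage. This is exactly the paper's Claim~1, and it is really just your direction (ii) specialized to the robber's current vertex rather than an interior vertex of the path. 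So the argument is sound; just correct the direction of (i) when you write it up.
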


\begin{proof}
Fix the parameters $(n,\sigma,\rho,\psi, f(u_0),R)$ for the game in $\Gamma$, and let the  parameters for the game on $\Delta$ be defined as in the statement of the proposition. 

Suppose that $\Gamma$ is $\mathsf{CopWin}(n,\sigma,\rho,\psi,f(u_0), R)$. To show that the graph  $\Delta$ is $\mathsf{CopWin}(n,\sigma_\Delta,\rho_\Delta,\psi_\Delta,u_0,R_\Delta)$ we describe a winning strategy for $\Delta$ based on the winning strategy for $\Gamma$. We will be playing simultaneous games on $\Delta$ and $\Gamma$. The cops in $\Gamma$ will move according   to a winning strategy. The moves of the cops in $\Gamma$ will determine the moves of the cops in $\Delta$, and the move of the robber on $\Delta$ will determine the move of the robber on $\Gamma$. We will show that this translates into a winning strategy for the cops on $\Delta$.

In the set up of the game on $\Gamma$, the initial position of the $i$-th cop is denoted as $c_{ i,0 }^\Gamma$. For the game on  $\Delta$, choose the initial position of the $i$-th cop as 
\[c_{i,0}=g(c_{ i,0 }^\Gamma).\]
In this way, each cop in $\Delta$ corresponds to a cop in $\Gamma$, and this correspondence will be fixed throughout the game.

At this point the robber in $\Delta$ chooses an initial position $r_0 \in \Delta$. Let the initial position of the robber in $\Gamma$ be  \[r_0^\Gamma=f(r_0).\] 

For the game in $\Gamma$, let $c_{1,l}^\Gamma,c_{2,l}^\Gamma,\dots,c_{n,l}^\Gamma$ and $r_l^\Gamma$ denote the locations of the the cops and the robber, respectively, at the end of the $l$-stage of the game. Similarly for the game on $\Delta$, let $c_{1,l},c_{2,l},\dots,c_{n,l}$ and $r_l$ denote the locations of the the cops and the robber at the end of the   $l$-stage.  The movements of the cops in $\Delta$ are defined by 
   \[c_{i,l}=g(c_{i,l}^\Gamma), \]
for $1\leq i \leq n$. The movement from   $c_{i,l}$  to   $c_{i,l+1}$ during the $(l+1)$-stage is valid since 
   \[ \begin{split}
	  \dist_\Delta (c_{i,l},c_{i,l+1})= \dist_\Delta(g(c^\Gamma_{i,l}),g(c^\Gamma_{i,l+1}) ) & \leq C \dist_\Gamma(c^\Gamma_{i,l},c^\Gamma_{i,l+1})+D\\
	   & \leq C\sigma+D=\sigma_\Delta. 
	\end{split}
	\]
During the $(l+1)$-stage, after the cops have made their movement in $\Delta$, if the robber in $\Delta$ has not been captured, the robber  moves from $r_l$ to a position $r_{l+1}$. 
 
 Now the robber in $\Gamma$ moves from $r_l^\Gamma$ to 
 \[ r_{l+1}^\Gamma=f(r_{l+1}).\]
 We need to show that this is a valid move, that is, there is a path in $\Gamma$ from $r_{l}^\Gamma$ to $r_{l+1}^\Gamma$ of length at most $\psi$ which has every vertex at a distance larger than $\rho$ from every cop. 

Since the move of the robber from $r_l$ to $r_{l+1}$ in $\Delta$ was valid, there is a path $[r_{l}=w_0,w_1,\ldots , w_k=r_{l+1}]$ of length $k\leq \psi_\Delta$ such that $\rho_\Delta<\dist_\Delta(w_j, c_{i,l+1})$ for each $0\leq j\leq k$ and $1\leq i\leq n$. It follows that \[\begin{split}
 \rho_\Delta &<  \dist_\Delta(w_j, c_{i,l+1})\\ & \leq \dist(w_i, g(f(w_j))+ \dist_\Delta(g(f(w_j)), g(c_{i,l+1}^\Gamma))\\
 & \leq 2D+ C\dist_\Gamma(f(w_j), c_{i,l+1}^\Gamma) ) .
 \end{split}\]
Since $\rho_\Delta=C\rho+C^2+CD+2D+C$, we have that 
 \[ \rho+C+D+1 \leq \dist_\Gamma(f(w_j), c_{i,l+1}^\Gamma).\]
 On the other hand, 
 \[ \dist_\Gamma(f(w_j), f(w_{j+1}))\leq C\dist_\Delta(w_j, w_{j+1})+D =C+D.\]
 These last two inequalities imply that every vertex in a geodesic from $f(w_i)$ to $f(w_{i+1})$ is at a distance larger than $\rho$ from the cops positions $c_{i,l+1}^\Gamma$ during the $(l+1)$-stage. Hence a path from $r_l^\Gamma$ to $r_{l+1}^\Gamma$ obtained as a concatenation of geodesic paths  between consecutive vertices of the sequence $r^\Gamma_l=f(w_0),f(w_1),\ldots, f(w_k)=r_{l+1}^\Gamma$ has length at most 
 \[ (C+D)k \leq (C+D)\psi_\Delta = \psi \]
 and every vertex of this path is at a distance larger than $\rho$ from  the cop  positions $c_{i,l+1}^\Gamma$ during the $(l+1)$-stage. Hence the move of the robber in $\Gamma$ during the $(l+1)$-stage is valid.
 
Throughout the rest of the game, the moves of the cops in $\Delta$ are given by the moves of the cops in $\Gamma$, and the moves of the robber in $\Gamma$ are given by the  moves of the robber in $\Delta$ as described above.  The conclusion of the proposition then follows from the following two claims.

\begin{claim}
Once the robber is captured in the game on $\Gamma$, the robber will be captured in the game on $\Delta$.    
\end{claim}
Suppose that the robber is captured in $\Gamma$ during the  $(l+1)$-stage of the game. This means that $\dist_\Gamma (c_{ i,l }^\Gamma,r_{l} ^\Gamma)\leq \sigma+\rho$ for some $i$. It follows that
\[
\begin{split}
	   \dist_\Delta(c_{i,l},r_{l})
	   & = \dist_\Delta(g(c^\Gamma_{i,l}),r_{l} )\\
	   & \leq   \dist_\Delta(g(c^\Gamma_{i,l}),g(f(r_{l})) + \dist_\Delta(g(f(r_{l}), r_{l})\\  
	   & \leq C \dist_\Gamma(c^\Gamma_{i,l},f(r_{l}))+2D\\
  	   & = C \dist_\Gamma(c^\Gamma_{i,l}, r_{l}^\Gamma )+2D\\
	   & \leq C(\sigma+\rho) +2D\\
	   & = C\sigma + C\rho+2D \\&< \sigma_\Delta+\rho_\Delta 
\end{split}
\]	
This shows that at the end of the $l$-stage the robber in $\Delta$ is at distance less than $\sigma_\Delta+\rho_\Delta$ from at least one cop, and hence the robber in $\Delta$ is captured during the $(l+1)$-stage.

\begin{claim}
If the robber is forced out of the $R$-ball around $f(u_0)$ in the game on $\Gamma$, then the robber will be forced out of the $R_\Delta$-ball around $u_0$ in the game on $\Delta$.    
\end{claim}
Assume the robber is   outside the $R$-ball around $f(u_0)$ in the game on $\Gamma$ at the end of the $l$-stage, that is, 
	$\dist_\Gamma(r^\Gamma_{l}, f(u_0)) > R$. Then 
\[
\begin{split}
	   \dist_\Delta(r_l, u_0) & \geq \frac{1}{C} \left( \dist_\Gamma(f(r_l), f(u_0)) - D \right)\\
	   & = \frac{1}{C} \left( \dist_\Gamma( r_l^\Gamma , f(u_0)) - D\right)\\
       & \geq \frac{1}{C} ( R  - D)\\
       & = R_\Delta
\end{split}
\]	
Hence, if the robber in $\Gamma$ is outside  the ball $B(f(u_0), R)$ in $\Gamma$, then  the robber in $\Delta$ is outside the ball $B(u_0, R_\Delta)$. 
\end{proof}

\begin{proof}[Proof of Theorem~\ref{corocopN1}.]
Let $\Gamma$ and $\Delta$ be connected graphs, and let  $(f, g)$ be a quasi-retraction  from $\Gamma$ to $\Delta$ with constants $(C,D)$. Suppose that $\Gamma$ is  $n$-weak cop win. Then there are $\sigma$ and $\rho$ such that such that $\Gamma$ is 
$\mathsf{CopWin}(n,\sigma,\rho,  \psi, v_0, R)$ for every vertex $v_0$, and any $\psi>0$ and $R>0$.  Let $\sigma_\Delta=C\sigma+D$ and 
$\rho_\Delta = C \rho + C^2 + CD +2D+C$.  By Proposition~\ref{lemaqrCN}, $\Delta$ is $\mathsf{CopWin}(n,\sigma_\Delta,\rho_\Delta, \psi_\Delta, u_0, R_\Delta)$ for any vertex $u_0$, and any $\psi_\Delta>0$ and $R_\Delta>0$. In particular, $\wcop(\Delta)\leq n$. 

The argument proving that $\scop(\Delta) \leq \scop(\Gamma)$ is analogous.
\end{proof}

\section{Weak cop number of one-ended non-amenable graphs}\label{sectionamenable}
Let $\Gamma$ be a connected, locally finite graph. 
The graph $\Gamma$ is \emph{one-ended} if for any finite subset of vertices $K$ the induced subgraph $\Gamma-K$ has only one unbounded connected component.
For a subset of vertices $K$ of $\Gamma$, let $\partial K$ be the set of edges of $\Gamma$ with one endpoint in $K$ and the other endpoint not in $K$. The \emph{Cheeger constant} of $\Gamma$ is defined as
\[h(\Gamma)=\inf \left\{ \frac{|\partial K|}{|K|}\mid \text{$K$ is a non-empty finite subset of vertices of $\Gamma$}\right\}.\]
The graph $\Gamma$ is \emph{non-amenable} if it has nonzero Cheeger constant. Observe that if $\Gamma$ is a non-amenable connected locally finite graph then $\Gamma$ is an infinite graph. 

An \emph{automorphism} of a graph $\Gamma$ is a bijection $V(\Gamma)\to V(\Gamma)$ such that any two vertices are adjacent in $\Gamma$ if and only if their  images are adjacent. The set of automorphisms, denoted by $\mathsf{Aut}(\Gamma)$, is a group under composition of functions. A graph $\Gamma$ is \emph{vertex-transitive} if   $\mathsf{Aut}(\Gamma)$  acts transitively on its vertex set.

\begin{theorem}\label{1eN-A}
If $\Gamma$ is a connected, locally finite, one-ended, non-amenable, vertex transitive graph, then $\wcop(\Gamma)=\infty$.
\end{theorem}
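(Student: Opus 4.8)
The plan is to show that for every choice of parameters $n$, $\sigma$ and $\rho$ there exist a robber speed $\psi$ and a radius $R$ such that, for the fixed base vertex $u_0$, the graph $\Gamma$ is \emph{not} $\mathsf{CopWin}(n,\sigma,\rho,\psi,u_0,R)$; by the reduction to a single base vertex (the lemma of Section~\ref{sec:QuasiRetractions}) this shows $\Gamma$ is not $n$-weak cop win, and since $n$ is arbitrary we conclude $\wcop(\Gamma)=\infty$. So the task is to describe a winning strategy for the robber, and the philosophy is the one already used in the proof of Theorem~\ref{thm:SquareGrid}: the robber hides deep inside a cop-free region, and whenever a cop comes close he relocates to a fresh cop-free region along a path the cops cannot block.

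Fix $n,\sigma,\rho$ and set $L=\sigma+\rho$. Let $d$ be the common degree of $\Gamma$, let $h=h(\Gamma)>0$, and write $V_s$ for the (vertex-independent, by transitivity) number of vertices in a ball of radius $s$. Put $M=nV_\rho$, an upper bound for the number of vertices within the cops' reach at any moment, and $m_0=\lceil dM/h\rceil$. The key isoperimetric observation is: if $F\subseteq V(\Gamma)$ with $|F|\le M$ and $A$ is a finite component of $\Gamma-F$, then every edge of $\partial A$ is incident to $F$, so $h|A|\le|\partial A|\le d|F|\le dM$, whence $|A|\le m_0$. Combined with one-endedness, which says $\Gamma-F$ has a \emph{unique} infinite component $\Omega_F$, this yields: any vertex $z$ with $B(z,m_0+1)\cap F=\emptyset$ lies in $\Omega_F$, since the connected set $B(z,m_0+1)$ has more than $m_0$ vertices and so cannot lie in a finite component. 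Now fix the safe radius $t=L+m_0+2$, fix $R$ large enough that $V_R>nV_t$, and fix $\psi$ large (discussed below).

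The robber maintains the invariant that at the end of each stage he sits at a vertex $z$ with $\dist(u_0,z)\le R$ and with $B(z,t)$ containing no cop; this is achievable initially because $V_R>nV_t$. At the start of the next stage the cops move at most $\sigma$, so the robber is still at distance $>t-\sigma=\rho+m_0+2$ from every cop, hence not captured, and $B(z,m_0+1)$ is disjoint from the current reach set $F=\bigcup_i B(c_i,\rho)$, so $z\in\Omega_F$. If $B(z,t)$ is still cop-free the robber stays put; otherwise he picks, using $V_R>nV_t$ again, a target $z'\in B(u_0,R)$ with $B(z',t)$ cop-free, so that $B(z',m_0+1)\cap F=\emptyset$ and hence $z'\in\Omega_F$ too. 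Since $z$ and $z'$ lie in the same infinite component $\Omega_F$ of $\Gamma-F$, there is a path from $z$ to $z'$ avoiding the cops' reach entirely, and the robber moves along it. In every case the invariant is restored, the robber is never captured, and $r_k\in B(u_0,R)$ for all $k$, so the cops can never protect $B(u_0,R)$.

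The one delicate point is that the robber must commit to a single $\psi$ in advance, so the relocation path must have length bounded by a function of $n,\sigma,\rho,R$ and the graph. I would obtain this from a \emph{uniform detour bound}: in a one-ended locally finite graph the set of vertices of $\{v:\dist(v,c)=r+1\}$ lying in the infinite component of $\Gamma-B(c,r)$ is finite, and all of its members are joined inside that infinite component, so their pairwise distances there are bounded by some $\ell(r)<\infty$; by vertex-transitivity $\ell(r)$ is independent of $c$. One then takes a $\Gamma$-geodesic from $z$ to $z'$ of length $\le 2R$ and detours it around the cops — grouping cops that are mutually close into a single bounded ball and re-routing around that ball by means of $\ell$ — producing, after boundedly many grouping steps, a path in $\Omega_F$ of length at most $2R+\kappa(n,\sigma,\rho)$, so that $\psi=2R+\kappa(n,\sigma,\rho)$ works. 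I expect the careful bookkeeping of this re-routing, namely ensuring that the detour around one cluster of cops stays clear of the reach of all the others, to be the main technical obstacle; the isoperimetric estimate and one-endedness are the conceptual core.
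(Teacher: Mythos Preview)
Your overall strategy matches the paper's: use non-amenability to bound the size of finite components of $\Gamma$ minus the cops' reach, use one-endedness to get a unique infinite component $\Omega_F$ containing every vertex that is sufficiently far from the cops, keep the robber at a safe point inside $B(u_0,R)$ that must exist by a counting argument, and have him hop to a fresh safe point through $\Omega_F$ whenever threatened. The paper's only cosmetic difference is that it fixes $n+1$ well-separated safe points $s_1,\dots,s_{n+1}$ in advance rather than selecting $z'$ freshly each turn.

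The step you flag as ``the main technical obstacle'' is precisely where the entire content of the paper's proof lies, and your sketch does not yet close it. The difficulty is exactly the one you name: a detour around one cluster of cops built from your function $\ell(r)$ may pass within reach of another cluster, and iterating naively need not terminate with a controlled constant, since merging clusters enlarges the radius you feed into $\ell$. The paper resolves this with an explicit induction on the number of connected components of the obstacle. Writing $\Delta$ for the union of the cops' reach with the finite components of its complement (so $\Gamma-\Delta$ is connected, with $|\Delta|\le m$ bounded via the Cheeger constant and at most $n$ components $\Delta_1,\dots,\Delta_n$), one forms the auxiliary graph $\mathsf{Rips}_\Gamma(\Delta_1,\dots,\Delta_n;L)$ with an edge between $\Delta_i$ and $\Delta_j$ whenever $\dist_\Gamma(\Delta_i,\Delta_j)\le L$. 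If this graph is connected for $L=L_{m,n-1}$ (the inductively defined detour constant for $n-1$ components), then $\Delta$ lies in a ball of bounded radius, and vertex-transitivity plus local finiteness give only finitely many such $\Delta$ up to automorphism, yielding a uniform detour constant $L_{m,n}$ outright. If it is disconnected, split $\Delta$ into two pieces with fewer components each; by induction the detour around either piece has length at most $L_{m,n-1}$, which is exactly the threshold guaranteeing it stays clear of the other piece. This is your ``grouping'' made rigorous; the missing ingredient in your sketch is the choice of merging threshold, namely the previous inductive constant, which is what ensures the process closes.
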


For the rest of the section, assume that $\Gamma$ is a connected, locally finite, vertex transitive graph.
Let us introduce some notation. If $K$ is a subset of vertices of $\Gamma$, then $\Gamma-K$ denotes the subgraph of $\Gamma$ induced by $V(\Gamma)-K$.
If $\Delta$ is a subgraph of $\Gamma$, then $|\Delta|$ denotes the number of vertices of $\Delta$, $\partial \Delta$ denotes $\partial V(\Delta)$, and $\Gamma-\Delta$ is the subgraph $\Gamma-V(\Delta)$.

\subsection{Subgraph undistorted embedding}
In the case that $\Delta$ is a finite subgraph of a connected graph $\Gamma$ such that the induced subgraph $\Gamma-\Delta$ is connected, then it is an exercise to show that there is a constant $L$ such that $\dist_{\Gamma-\Delta}(a,b) \leq L\dist_\Gamma(a,b)$ for any pair of vertices $a,b\in \Gamma-\Delta$. Roughly speaking, $L$ is the diameter of $\partial \Delta$ in $\Gamma-\Delta$, and to prove the inequality one constructs a path in $\Gamma-\Delta$ from $a$ to $b$ by taking a geodesic in $\Gamma$ from $a$ to $b$ and replacing its subpaths intersecting $\Delta$ by subpaths in $\Gamma-\Delta$ of length $L$ that go around $\Delta$. The following proposition uses a similar argument to obtain a better estimation of $L$ by taking into account connectivity information of $\Delta$.
\begin{proposition}[Undistorted Embedding ]\label{prop:RobberSpeed}
Let $\Gamma$ be a connected, vertex transitive, locally finite graph. For any pair of integers $m$ and $n$, there is an integer $L_{m,n}=L(\Gamma,m,n)$ with the following property.
Let $\Delta$ be a subgraph of $\Gamma$ with at most $m$ vertices and such that $\Gamma-\Delta$ is connected. Suppose that $\Delta$ is the union of pairwise disjoint connected subgraphs $\Delta_1, \Delta_2, \ldots , \Delta_n$. If $a,b$ are vertices in $\Gamma-\Delta$, then
\[ \dist_{\Gamma-\Delta}(a,b) \leq L_{m,n}\dist_\Gamma (a,b).\]
\end{proposition}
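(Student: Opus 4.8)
The plan is to reduce the statement, by surgery on geodesics, to a uniform bound on ``detours'' around excursions into $\Delta$, and then to obtain that bound from a finiteness argument combined with an absorption procedure. The delicate point throughout --- and the reason the ``naive'' estimate only yields $\diam_{\Gamma-\Delta}(\partial\Delta)$, which is not uniform in $m$ --- is that pieces of $\Delta$ lying far from a given excursion may still obstruct the obvious local detour, with no a priori bound on how spread out the $\Delta_i$ are; the absorption step below is exactly what handles this.

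\emph{Reduction by geodesic surgery.} Fix $\Delta=\Delta_1\cup\cdots\cup\Delta_n$ as in the hypothesis and vertices $a,b\in\Gamma-\Delta$, and let $v_0,v_1,\ldots,v_k$ be a $\Gamma$-geodesic from $a$ to $b$, so $k=\dist_\Gamma(a,b)$. Since $v_0,v_k\notin\Delta$, this geodesic decomposes into edges lying in $\Gamma-\Delta$ together with maximal subpaths $v_i,v_{i+1},\ldots,v_j$ (``excursions'') with $v_i,v_j\in\Gamma-\Delta$ and $v_{i+1},\ldots,v_{j-1}\in\Delta$; as the interior vertices of an excursion are distinct vertices of $\Delta$ we have $j-i\le m+1$. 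Hence it suffices to produce, for each excursion, a path in $\Gamma-\Delta$ from $v_i$ to $v_j$ of length at most a constant $L_0=L_0(\Gamma,m,n)$: replacing each excursion by such a detour and keeping the edges already in $\Gamma-\Delta$ gives a walk in $\Gamma-\Delta$ from $a$ to $b$ of length at most $L_0 k$, so $\dist_{\Gamma-\Delta}(a,b)\le L_0\dist_\Gamma(a,b)$ and the proposition follows with $L_{m,n}=L_0$.

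\emph{A finiteness lemma.} I would first prove the following. \textbf{Claim:} for every $D\ge 0$ there is a constant $\Lambda=\Lambda(\Gamma,m,n,D)$ such that whenever $S\subseteq V(\Gamma)$ has $|S|\le m$, is a union of at most $n$ subsets each connected in $\Gamma$, and has $\dist_\Gamma$-diameter at most $D$, and $x,y\notin S$ are adjacent to $S$ and lie in the same connected component of $\Gamma-S$, then $\dist_{\Gamma-S}(x,y)\le\Lambda$. This is where vertex-transitivity and local finiteness enter: fixing a base vertex $o$, every such configuration $(S,x,y)$ can be moved by an automorphism of $\Gamma$ so that $S\subseteq B_D(o)$ and $x,y\in B_{D+1}(o)$, and automorphisms preserve $\dist_{\Gamma-S}$; since $B_{D+1}(o)$ is a finite graph there are only finitely many such triples inside it, and for each one with $x,y$ in a common component of $\Gamma-S$ the number $\dist_{\Gamma-S}(x,y)$ is finite, so I take $\Lambda$ to be their maximum.

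\emph{Absorbing interfering pieces.} Now I would treat a single excursion $v_i,\ldots,v_j$. Let $\Delta^{(1)}$ be the union of those pieces $\Delta_l$ meeting $\{v_{i+1},\ldots,v_{j-1}\}$; it contains that subpath, is connected in $\Gamma$, has at most $m$ vertices, hence diameter at most $m-1$, and $v_i,v_j$ are adjacent to it and, since $\Gamma-\Delta\subseteq\Gamma-\Delta^{(1)}$ is connected, lie in a single component of $\Gamma-\Delta^{(1)}$. Then I iterate: given $\Delta^{(r)}$ (a union of at most $n$ pieces, at most $m$ vertices, diameter at most $D_r$), the Claim provides a path $Q_r$ in $\Gamma-\Delta^{(r)}$ from $v_i$ to $v_j$ of length at most $\Lambda(\Gamma,m,n,D_r)$; since $Q_r$ starts within distance $1$ of $\Delta^{(r)}$ it stays within distance $\Lambda(\Gamma,m,n,D_r)+1$ of $\Delta^{(r)}$. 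If $Q_r$ avoids $\Delta$ we are done; otherwise let $\Delta^{(r+1)}$ be $\Delta^{(r)}$ together with every piece of $\Delta$ meeting $Q_r$, so that $\Delta^{(r+1)}$ has strictly more pieces than $\Delta^{(r)}$, still at most $m$ vertices, and diameter at most $D_{r+1}:=D_r+2(\Lambda(\Gamma,m,n,D_r)+1)$, a quantity depending only on $\Gamma,m,n,r$. Because $\Delta$ has at most $n$ pieces, the chain $\Delta^{(1)}\subsetneq\Delta^{(2)}\subsetneq\cdots$ stabilizes within $n$ steps, so the procedure terminates with a detour in $\Gamma-\Delta$ of length at most $L_0:=\max_{1\le r\le n}\Lambda(\Gamma,m,n,D_r)$, which depends only on $\Gamma,m,n$. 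The main obstacle, the possible presence of far-away pieces of $\Delta$, is dealt with precisely here: an obstructing piece must lie within a controlled distance of the current clump, so it can be swallowed, and the count of pieces forces the iteration to stop after at most $n$ rounds.
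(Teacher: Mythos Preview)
Your argument is correct in structure and genuinely different from the paper's proof. One small slip: the diameter bound $D_{r+1}=D_r+2(\Lambda_r+1)$ forgets that an absorbed piece $\Delta_l$ may extend up to $\diam_\Gamma(\Delta_l)\le m-1$ beyond its point of contact with $Q_r$; the correct recursion is something like $D_{r+1}\le D_r+2(\Lambda_r+m)$. This does not affect the logic, since all that matters is that $D_{r+1}$ is a function of $\Gamma,m,n,D_r$ and the iteration halts in at most $n$ rounds.

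The paper organises the same finiteness principle differently. It introduces an auxiliary graph $\RipsGraph{\Gamma}{\Delta_1,\ldots,\Delta_n}{L}$ recording which pieces lie within distance $L$ of one another, defines $L_{m,n}$ recursively ($L_{m,0}=0$, and $L_{m,n}$ is the maximal detour over all $\Delta$ with $\le n$ pieces whose Rips graph at scale $L_{m,n-1}$ is \emph{connected}), and then proves the bound by induction on $n$: if the Rips graph at scale $L_{m,n-1}$ is connected the configuration has bounded diameter and the finiteness argument applies directly; if it is disconnected one splits $\Delta$ into two far-apart clumps with fewer pieces, applies induction to each, and observes that a detour around one clump is too short to meet the other. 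Your absorption procedure replaces this dichotomy-plus-induction by a direct iterative construction localised at each excursion: you grow the relevant clump one piece at a time, bounding its diameter as you go, and terminate because the piece-count is bounded by $n$. The paper's version gives a slightly cleaner recursive formula for $L_{m,n}$ and handles all excursions uniformly with a single constant; your version is more hands-on and avoids introducing the Rips graph, at the cost of tracking the growing diameter bounds $D_1,\ldots,D_n$ explicitly. Both routes rest on the same two ingredients---vertex-transitivity plus local finiteness to reduce to finitely many local configurations, and the bound $n$ on the number of pieces to force termination.
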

The proof of the proposition is divided into lemmas. The definition of the constants $L_{m,n}$ are part of Lemma~\ref{lem:DefLmn}. That the constants $L_{m,n}$ satisfy the statement of the proposition is proved by induction on $n$, where Lemma~\ref{lem:BaseCase} provides the case $n=1$, and then Lemma~\ref{lem:InductiveStep} concludes the proof of the proposition.
Recall that the distance between subgraphs $\Delta_1$ and $\Delta_2$ of $\Gamma$ is defined as \[ \dist_\Gamma(\Delta_1, \Delta_2):=\min\{\dist_\Gamma(u_1,u_2)\mid u_1\in V(\Delta_1) \text{ and } u_2\in V(\Delta_2)\}.\]
\begin{lemma}\label{lem:counting}
Let $m,n$ and $L$ be positive integers. Then, up to the action by $\mathsf{Aut}(\Gamma)$, there are finitely many subgraphs $\Delta$ such that
\begin{itemize}
\item $|\Delta|\leq m$,
\item $\Delta$ is the union of $n$ pairwise disjoint connected subgraphs $\Delta=\Delta_1\cup \dots \cup \Delta_n$,
\item $0< \dist_\Gamma (\Delta_1, \Delta_i) \leq L$ for $1<i\leq n$.
\end{itemize}
\end{lemma}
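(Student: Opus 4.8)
The plan is to show that every subgraph $\Delta$ meeting the three bullet conditions becomes, after an appropriate automorphism is applied, a subgraph of one fixed finite ball, and then to use that a finite graph has only finitely many subgraphs.

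First I would bound the diameter of $\Delta$ in $\Gamma$. Each piece $\Delta_j$ is connected with at most $m$ vertices, so any two of its vertices are at distance at most $m-1$ in $\Delta_j$, hence at most $m-1$ in $\Gamma$. Fix a vertex $u\in V(\Delta_1)$. For $1<i\leq n$ the hypothesis $\dist_\Gamma(\Delta_1,\Delta_i)\leq L$ gives vertices $w_i\in V(\Delta_1)$ and $w_i'\in V(\Delta_i)$ with $\dist_\Gamma(w_i,w_i')\leq L$, and then for every $z\in V(\Delta_i)$ the triangle inequality yields $\dist_\Gamma(u,z)\leq \dist_\Gamma(u,w_i)+\dist_\Gamma(w_i,w_i')+\dist_\Gamma(w_i',z)\leq (m-1)+L+(m-1)$. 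Together with the bound on $\Delta_1$ itself, this shows $V(\Delta)$ is contained in the ball of radius $r:=2m+L$ around $u$.

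Next I would invoke vertex transitivity and local finiteness. Fix a base vertex $v_0\in V(\Gamma)$; by vertex transitivity there is $\phi\in\mathsf{Aut}(\Gamma)$ with $\phi(u)=v_0$, and since graph automorphisms are isometries of the path metric, $\phi(\Delta)$ is a subgraph whose vertex set lies in the ball of radius $r$ around $v_0$. Because $\Gamma$ is connected and locally finite, that ball is a \emph{finite} vertex set, say $F=F(\Gamma,m,L)$, and the subgraph of $\Gamma$ induced on $F$ is a finite graph. A finite graph has only finitely many subgraphs (finitely many choices of vertex subset of $F$, and for each, finitely many choices of edge subset). Hence $\phi(\Delta)$ ranges over a finite list of subgraphs depending only on $\Gamma$, $m$, $n$, $L$, so the collection of all $\Delta$ satisfying the bullet conditions is a union of finitely many $\mathsf{Aut}(\Gamma)$-orbits, as claimed.

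I do not anticipate a genuine obstacle; the only points requiring a little care are that $\Delta$ is an arbitrary subgraph, not necessarily induced, so the count ``finitely many subgraphs of a finite graph'' must allow both vertex and edge subsets (which it does), and that the translating automorphism must be chosen to send a vertex of $\Delta_1$ specifically to $v_0$, so that the distance estimate anchored at $\Delta_1$ is the one being applied.
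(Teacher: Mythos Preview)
Your proof is correct and follows essentially the same approach as the paper: bound $\Delta$ inside a ball of radius depending only on $m$ and $L$, use vertex transitivity to move that ball to one centered at a fixed vertex, and count subgraphs of a finite graph using local finiteness. The paper's proof compresses all of this into two sentences (``there are finitely many such $\Delta$ containing a fixed vertex $u$; now use vertex transitivity''), leaving the diameter estimate implicit, whereas you spell it out explicitly and anchor it at $\Delta_1$---which is arguably cleaner, since it gives the slightly sharper radius $2(m-1)+L$ rather than the $3(m-1)+2L$ one would need if the fixed vertex could land in an arbitrary $\Delta_j$.
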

\begin{proof}
Fix a vertex $u$ of $\Gamma$. Since $\Gamma$ is locally finite, there are finitely many subgraphs $\Delta$ as in the statement that contain the vertex $u$. Since $\Gamma$ is vertex transitive, the statement of the lemma follows.
\end{proof}

\begin{definition}Let $\Gamma$ be a graph and let $\Delta_1, \dots , \Delta_n$ be a collection of $n$ subgraphs. Define $\mathsf{Rips}_\Gamma(\Delta_1, \dots , \Delta_n; L)$ as the graph with vertex set $\{\Delta_1, \dots , \Delta_n\}$ and edge set $\{ \{\Delta_i, \Delta_j\} \mid 0< \dist_\Gamma (\Delta_i, \Delta_j) \leq L \}$. See Figure~\ref{AGraph} for an illustration of this definition.

\begin{figure}[ht]
	\centering
		\begin{tikzpicture}[scale=0.6]
\foreach \Point in { (0,2), (3,4),(-3.5,2),(1.5,2.5), (-4.1,3.5),(1,4), (-1.6,4.5)}{
\filldraw[color=black!60, fill=black!3, very thick,opacity=0.6]\Point circle (1.2);
}
\node at (3,4) {$\Delta_1$};
\node at (1.5,2.5) {$\Delta_2$};
\node at (1,4) {$\Delta_3$};
\node at (0,2) {$\Delta_4$};
\node at (-3.5,2) {$\Delta_5$};
\node at (-4.1,3.5) {$\Delta_6$};
\node at (-1.6,4.5) {$\Delta_7$};
\foreach \Point in { (12,2), (15,4),(9,1.5),(14,2), (7.9,3.5),(13,4), (10.4,4.5)}{
\filldraw \Point circle (2pt);
}
\node at (15,4.6) {$v_1$};
\node at (14,1.4) {$v_2$};
\node at (13,4.6) {$v_3$};
\node at (12,1.4) {$v_4$};
\node at (9,0.9) {$v_5$};
\node at (7.9,4.1) {$v_6$};
\node at (10.4,5.1) {$v_7$};
\draw (15,4)--(14,2) --(13,4) -- (12,2)--(14,2);
\draw (15,4)-- (13,4);
\draw (7.9,3.5) -- (9,1.5);
		\end{tikzpicture}
	\caption{Schematic of a graph $\mathsf{Rips}_\Gamma(\Delta_1,\ldots ,\Delta_7; L)$}
	\label{AGraph}
\end{figure}
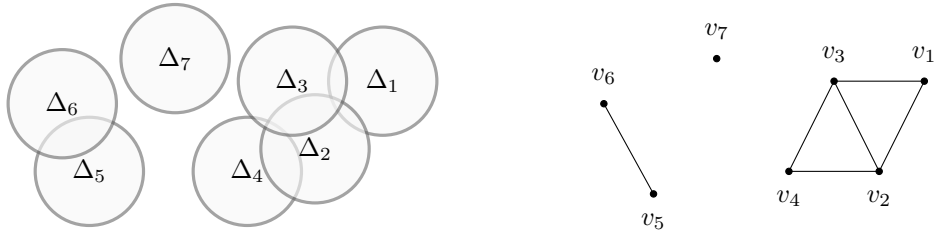
\end{definition}
\begin{remark}\label{rem:separation}
If $\mathsf{Rips}_\Gamma(\Delta_1, \dots , \Delta_n; L)$ is disconnected then, after re-enumerating the $\Delta_i$'s, there is $1\leq k< n$ such that
$\mathsf{Rips}_\Gamma(\Delta_1, \dots , \Delta_n; L)$ is the disjoint union of
$\mathsf{Rips}_\Gamma(\Delta_1, \dots , \Delta_k; L)$ and
$\mathsf{Rips}_\Gamma(\Delta_{k+1}, \dots , \Delta_n; L)$.
\end{remark}
\begin{lemma}\label{lem:counting2}
Let $m,n$ and $L$ be positive integers. Then, up to the action by $\mathsf{Aut}(\Gamma)$, there are finitely many subgraphs $\Delta$ such that
\begin{itemize}
\item $|\Delta|\leq m$,
\item $\Delta$ is the union of $n$ pairwise disjoint connected subgraphs $\Delta=\Delta_1\cup \dots \cup \Delta_n$,
\item $\mathsf{Rips}_\Gamma(\Delta_1, \dots , \Delta_n; L)$ is connected.
\end{itemize}
\end{lemma}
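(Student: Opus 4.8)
The plan is to reduce Lemma~\ref{lem:counting2} to Lemma~\ref{lem:counting}. The two statements differ only in their third bullet: Lemma~\ref{lem:counting} asks that every $\Delta_i$ with $i>1$ be within distance $L$ of $\Delta_1$, whereas here we only ask that $\Rips_\Gamma(\Delta_1,\dots,\Delta_n;L)$ be connected. So the whole content is to show that connectivity of the Rips graph still forces all the $\Delta_i$ to lie within a distance of $\Delta_1$ that is bounded in terms of $m$, $n$ and $L$ alone — after which Lemma~\ref{lem:counting} finishes the job.

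First I would record an elementary observation: since the $\Delta_i$ are pairwise disjoint subgraphs of $\Gamma$, we have $\dist_\Gamma(\Delta_i,\Delta_j)\geq 1$ for $i\neq j$, and each $\Delta_i$ is connected with at most $m$ vertices, so $\dist_\Gamma(u,v)\leq m-1$ for any two vertices $u,v$ of $\Delta_i$. Now fix $i$ with $1<i\leq n$. Because $\Rips_\Gamma(\Delta_1,\dots,\Delta_n;L)$ is connected, there is a path $\Delta_1=\Delta_{j_0},\Delta_{j_1},\dots,\Delta_{j_k}=\Delta_i$ in the Rips graph with $k\leq n-1$. For each consecutive pair choose vertices $x_\ell\in\Delta_{j_\ell}$, $y_\ell\in\Delta_{j_{\ell+1}}$ realizing $\dist_\Gamma(\Delta_{j_\ell},\Delta_{j_{\ell+1}})\leq L$, concatenate geodesics inside the intermediate subgraphs $\Delta_{j_\ell}$ to pass from $y_{\ell-1}$ to $x_\ell$, and get
\[
\dist_\Gamma(\Delta_1,\Delta_i)\ \leq\ kL+(k-1)(m-1)\ \leq\ (n-1)L+(n-2)(m-1)\ <\ (n-1)(L+m).
\]
Setting $L'=(n-1)(L+m)$, this gives $0<\dist_\Gamma(\Delta_1,\Delta_i)\leq L'$ for every $1<i\leq n$, and $L'$ depends only on $m$, $n$, $L$.

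Finally I would observe that every subgraph $\Delta$ satisfying the three bullets of Lemma~\ref{lem:counting2} satisfies the three bullets of Lemma~\ref{lem:counting} with parameters $m$, $n$ and $L'$. By Lemma~\ref{lem:counting}, there are, up to the action of $\Aut(\Gamma)$, finitely many such $\Delta$; a fortiori there are finitely many such $\Delta$ up to $\Aut(\Gamma)$ satisfying the hypotheses of Lemma~\ref{lem:counting2}. (An equivalent self-contained variant avoids citing Lemma~\ref{lem:counting}: the diameter estimate above shows that, after applying an automorphism sending a fixed vertex of $\Delta_1$ to a basepoint $u$, the whole of $\Delta$ lies in the ball of radius $(n-1)(L+m)+m$ about $u$, which is finite by local finiteness and hence contains only finitely many subgraphs.)

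There is no real obstacle here once the reduction is spotted; the only point requiring any care is the passage from a path in the Rips graph to a path in $\Gamma$, where one must add in the diameters of the intermediate subgraphs $\Delta_{j_\ell}$ and check that the resulting bound $L'$ depends only on $m$, $n$ and $L$ — which it does, as displayed above.
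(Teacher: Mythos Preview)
Your proof is correct and follows essentially the same route as the paper: bound the diameter of each $\Delta_i$ by $m-1$, use connectivity of the Rips graph to bound $\dist_\Gamma(\Delta_1,\Delta_i)$ uniformly in terms of $m,n,L$, and then invoke Lemma~\ref{lem:counting}. The paper's argument is terser and uses the cruder bound $(m+n)L$ in place of your $(n-1)(L+m)$, but the strategy is identical.
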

\begin{proof}
Since each $\Delta_i$ is connected with at most $m$ vertices, it follows that each $\Delta_i$ has diameter at most $m-1$. Since $\mathsf{Rips}_\Gamma(\Delta_1, \dots , \Delta_n; L)$ is connected, we have that $0<\dist_\Gamma(\Delta_1, \Delta_i) \leq (m+n)L$ for $1<i\leq n$. Then the statement follows from Lemma~\ref{lem:counting}.
\end{proof}
\begin{lemma}\label{lem:DefLmn}
For integers $m>0,n\geq 0$, let $L_{m,0}=0$
and
\[ \begin{split}
L_{m,n} = \max \{ & \dist_{\Gamma-\Delta} (a,b) \mid \\
&1\leq k\leq n,\\
& \text{$\Delta_1, \ldots, \Delta_k$ are disjoint connected subgraphs of $\Gamma$, } \\
& \text{$\Delta=\Delta_1\cup\ldots \cup\Delta_k$,}\\
& \mathsf{Rips}_\Gamma(\Delta_1, \dots , \Delta_k; L_{m,k-1}) \text{ is connected},\\
& \text{$\Gamma-\Delta$ is connected,}\\
& \text{$|\Delta|\leq m$,} \\ & \text{ $a,b\in V(\Gamma)-V(\Delta)$, and}\\
& \text{$\dist_\Gamma(a, \Delta)=1$ and $\dist_\Gamma(b,\Delta)=1$} \}.
\end{split} \]
Then $L_{m,n}$ are well-defined integers such that
\[0\leq L_{m,1} \leq L_{m,2 }\leq\cdots L_{m, k}\leq L_{m,k+1}\leq \cdots. \]
\end{lemma}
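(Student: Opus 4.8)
The plan is to prove, by induction on $n$, that $L_{m,n}$ is a well-defined nonnegative integer and that $L_{m,n-1}\leq L_{m,n}$. The base case $n=0$ is given by the definition $L_{m,0}=0$, and the case $n=1$ already makes sense once we know $L_{m,0}=0$: the Rips condition $\mathsf{Rips}_\Gamma(\Delta_1;L_{m,0})$ is vacuously connected for a single vertex, so $L_{m,1}$ is the maximum of $\dist_{\Gamma-\Delta}(a,b)$ over connected subgraphs $\Delta$ with $|\Delta|\leq m$, $\Gamma-\Delta$ connected, and $a,b$ at distance $1$ from $\Delta$. The key point making all of these maxima finite is that they are taken over a \emph{finite} set of configurations up to the action of $\mathsf{Aut}(\Gamma)$, and $\dist_{\Gamma-\Delta}(a,b)$ is invariant under that action.

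Concretely, assume inductively that $L_{m,0},\dots,L_{m,n-1}$ are well-defined nonnegative integers. Then in the defining expression for $L_{m,n}$, the quantity $L_{m,k-1}$ appearing inside the Rips graph is already known for each $1\leq k\leq n$, so the index set of the maximum is well-posed. To see the maximum is finite: for each fixed $k$ with $1\leq k\leq n$, Lemma~\ref{lem:counting2} (applied with parameters $m$, $k$, and $L_{m,k-1}$) tells us that there are, up to $\mathsf{Aut}(\Gamma)$, only finitely many tuples $(\Delta_1,\dots,\Delta_k)$ of pairwise disjoint connected subgraphs with $|\Delta|\leq m$ and $\mathsf{Rips}_\Gamma(\Delta_1,\dots,\Delta_k;L_{m,k-1})$ connected. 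For each such configuration (with $\Gamma-\Delta$ connected), the vertices $a,b$ with $\dist_\Gamma(a,\Delta)=\dist_\Gamma(b,\Delta)=1$ lie in the ball of radius $1$ around the finite set $\Delta$; since $\Gamma$ is locally finite, there are only finitely many such $a,b$, and for each the value $\dist_{\Gamma-\Delta}(a,b)$ is a finite nonnegative integer (finite because $\Gamma-\Delta$ is connected, by hypothesis on the configurations contributing to the max). Since graph automorphisms preserve $\dist_\Gamma$, the property ``$\Gamma-\Delta$ connected'', the Rips graph, and $\dist_{\Gamma-\Delta}(a,b)$, the set of values being maximized over is a finite subset of $\mathbb{Z}_{\geq 0}$ (I should note it is nonempty, e.g.\ taking $k=1$, $\Delta_1$ a single vertex, and $a=b$ a neighbor, which gives value $0$, provided $\Gamma-\{\text{vertex}\}$ is connected — if no admissible configuration exists one sets the max to $0$). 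Hence $L_{m,n}$ is a well-defined nonnegative integer.

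Finally, monotonicity $L_{m,n-1}\leq L_{m,n}$ is immediate once well-definedness is established: every configuration $(\Delta_1,\dots,\Delta_k)$ with $1\leq k\leq n-1$ contributing to the maximum defining $L_{m,n-1}$ also satisfies $1\leq k\leq n$ and the same remaining constraints, so it also contributes to the maximum defining $L_{m,n}$. Therefore the maximum over the larger index set is at least as large, giving the claimed chain $0\leq L_{m,1}\leq L_{m,2}\leq\cdots$. The only subtlety to be careful about — and the step I expect to require the most attention in the write-up — is confirming that the recursion is genuinely well-founded: the constant $L_{m,k-1}$ used in the Rips graph for the $k$-component configurations has strictly smaller second index, so by the time we define $L_{m,n}$ every constant referenced inside has already been constructed; this is exactly what the induction on $n$ secures, and it is why the lemma is phrased as a single simultaneous statement about the whole sequence rather than a bare definition.
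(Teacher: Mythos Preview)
Your proposal is correct and follows essentially the same approach as the paper: invoke Lemma~\ref{lem:counting2} to bound the configurations up to $\Aut(\Gamma)$ and then observe that monotonicity is immediate from the containment of index sets. The paper's proof is a two-line version of what you wrote; your version is more careful in making explicit the induction on $n$ (needed because $L_{m,k-1}$ appears inside the definition), the finiteness of the choices of $a,b$ via local finiteness, and the finiteness of each $\dist_{\Gamma-\Delta}(a,b)$ via connectedness of $\Gamma-\Delta$.
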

\begin{proof}
By Lemma~\ref{lem:counting2}, for $m>0$ and $n>0$, $L_{m,n}$ is the maximum of a finite number of integers and hence it is well-defined. That $L_{m,k}\leq L_{m,k+1}$ is immediate from the definition.
\end{proof}
\begin{lemma}\label{lem:BaseCase}
For any connected subgraph $\Delta$ such that $|\Delta|\leq m$ and $\Gamma-\Delta$ is connected, if $a,b$ are vertices in $\Gamma-\Delta$ then
\[ \dist_{\Gamma-\Delta}(a,b) \leq L_{m,1}\dist_\Gamma (a,b).\]
\end{lemma}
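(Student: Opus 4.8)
The plan is to take a $\Gamma$-geodesic from $a$ to $b$ and reroute each of its maximal stretches lying inside $\Delta$ through $\Gamma-\Delta$, using the very definition of $L_{m,1}$ to control the length of each reroute. This is exactly the strategy sketched informally just before Proposition~\ref{prop:RobberSpeed}, the role of Lemma~\ref{lem:DefLmn} being to package the relevant detour lengths into the single constant $L_{m,1}$.

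First I would fix a geodesic $\gamma=(a=v_0,v_1,\dots,v_\ell=b)$ in $\Gamma$, so that $\ell=\dist_\Gamma(a,b)$, and let $i_0<i_1<\dots<i_r$ list all indices $i$ with $v_i\in V(\Gamma)-V(\Delta)$. Since $a,b\notin V(\Delta)$ we have $i_0=0$ and $i_r=\ell$, and in particular $r\le\ell$. Now consider a consecutive pair $i_{t-1}<i_t$. If $i_t=i_{t-1}+1$, then $\{v_{i_{t-1}},v_{i_t}\}$ is an edge of $\Gamma$ with both endpoints outside $\Delta$, hence an edge of $\Gamma-\Delta$. Otherwise $v_{i_{t-1}+1},\dots,v_{i_t-1}$ are all vertices of $\Delta$; in this case $v_{i_{t-1}}$ is a vertex of $\Gamma-\Delta$ adjacent to $v_{i_{t-1}+1}\in V(\Delta)$, so $\dist_\Gamma(v_{i_{t-1}},\Delta)=1$, and likewise $\dist_\Gamma(v_{i_t},\Delta)=1$.

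Next I would invoke Lemma~\ref{lem:DefLmn} with $n=1$. Since $L_{m,0}=0$, the graph $\Rips_\Gamma(\Delta;L_{m,0})$ is a single vertex and is trivially connected; combined with the hypotheses that $\Delta$ is connected, $|\Delta|\le m$, and $\Gamma-\Delta$ is connected, this means that for each consecutive pair $i_{t-1}<i_t$ of the second type the triple $(\Delta,v_{i_{t-1}},v_{i_t})$ is one of those over which the maximum defining $L_{m,1}$ is taken, and hence $\dist_{\Gamma-\Delta}(v_{i_{t-1}},v_{i_t})\le L_{m,1}$. Concatenating, for $t=1,\dots,r$, a shortest path in $\Gamma-\Delta$ from $v_{i_{t-1}}$ to $v_{i_t}$ — an edge of length $1$ in the first case, a path of length at most $L_{m,1}$ in the second — yields a walk in $\Gamma-\Delta$ from $a$ to $b$; a routine count (connections of the first type contribute $1$ and consume one edge of $\gamma$, connections of the second type contribute at most $L_{m,1}$ and consume at least two edges of $\gamma$) bounds its length by $L_{m,1}\cdot r\le L_{m,1}\cdot\ell=L_{m,1}\dist_\Gamma(a,b)$, giving $\dist_{\Gamma-\Delta}(a,b)\le L_{m,1}\dist_\Gamma(a,b)$.

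I do not expect a genuine obstacle: this is the standard ``reroute a geodesic around an obstruction'' argument specialised to the constants of Lemma~\ref{lem:DefLmn}. The two points meriting a line of care are verifying that the ends $v_{i_{t-1}},v_{i_t}$ of each excursion sit at distance exactly $1$ from $\Delta$ (they do, being outside $\Delta$ yet adjacent to a vertex of $\Delta$), so that the defining conditions of $L_{m,1}$ apply verbatim; and, in the final count, the harmless use of $L_{m,1}\ge 1$, which holds whenever the statement is not vacuous (if $\Gamma-\Delta$ has at least two vertices, some admissible configuration for $L_{m,1}$ involves a pair of distinct vertices, forcing $L_{m,1}\ge1$; if $\Gamma-\Delta$ is a single vertex there is nothing to prove).
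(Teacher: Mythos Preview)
Your proof is correct and follows essentially the same approach as the paper: take a geodesic in $\Gamma$, replace each maximal subpath whose interior lies in $\Delta$ by a detour in $\Gamma-\Delta$ of length at most $L_{m,1}$, and count. The paper's proof is a one-paragraph sketch of exactly this argument; you have simply filled in the details (indexing the vertices outside $\Delta$, verifying the endpoints of each excursion lie at distance exactly $1$ from $\Delta$, and handling the edge case $L_{m,1}\ge 1$) that the paper leaves implicit.
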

\begin{proof}
Let $v$ and $w$ be vertices of $\Gamma-\Delta$ and let $p$ be a geodesic in $\Gamma$ between them. Then one can replace maximal subpaths of $p$ with all interior vertices in $\Delta$ with subpaths in $\Gamma-\Delta$ of length at most $L_{m,1}$ showing that $\dist_{\Gamma-\Delta}(v,w)\leq L_{m,1} \dist_\Gamma(v,w)$.
\end{proof}
\begin{lemma}\label{lem:InductiveStep}
Let $\Delta$ be a subgraph
such that $|\Delta|\leq m$, $\Gamma-\Delta$ is connected, and $\Delta$ is the union of $n$ pairwise disjoint connected subgraphs $\Delta=\Delta_1\cup \dots \cup \Delta_n$. If $a,b$ are vertices in $\Gamma-\Delta$ then
\[ \dist_{\Gamma-\Delta}(a,b) \leq L_{m,n}\dist_\Gamma (a,b).\]
\end{lemma}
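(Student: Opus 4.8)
I would prove the statement by analysing the graph $\Rips_\Gamma(\Delta_1,\ldots,\Delta_n;L_{m,n-1})$, according to whether it is connected; the case $n=1$, where this graph is a single vertex, falls under the connected case and recovers Lemma~\ref{lem:BaseCase}.

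\emph{Connected case.} Then $\Delta$ is one of the configurations entering the definition of $L_{m,n}$ in Lemma~\ref{lem:DefLmn} with $k=n$, so $\dist_{\Gamma-\Delta}(a,b)\le L_{m,n}$ whenever $a,b\in V(\Gamma)-V(\Delta)$ satisfy $\dist_\Gamma(a,\Delta)=\dist_\Gamma(b,\Delta)=1$. For arbitrary $a,b$ I would use the surgery from the proof of Lemma~\ref{lem:BaseCase}: take a $\Gamma$-geodesic from $a$ to $b$ and replace each maximal subpath all of whose interior vertices lie in $\Delta$ by a path in $\Gamma-\Delta$ of length at most $L_{m,n}$ joining its endpoints (which are at distance exactly $1$ from $\Delta$); a short count then gives $\dist_{\Gamma-\Delta}(a,b)\le L_{m,n}\dist_\Gamma(a,b)$.

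\emph{Disconnected case.} By Remark~\ref{rem:separation} I may re-enumerate so that $\Delta=A\cup B$, $A=\Delta_1\cup\cdots\cup\Delta_k$, $B=\Delta_{k+1}\cup\cdots\cup\Delta_n$, $1\le k<n$, with $\dist_\Gamma(\Delta_i,\Delta_j)>L_{m,n-1}$ for all $i\le k<j$; in particular $\dist_\Gamma(A,B)>L_{m,n-1}$, so no edge of $\Gamma$ joins $A$ to $B$. Two observations feed the argument. First, if $\Delta'$ is a sub-union of the $\Delta_i$'s that is $\Rips$-separated from its complement $\Delta''$ in $\Delta$, then $\Gamma-\Delta'$ is connected: since $\Gamma-\Delta$ is connected it lies in a single component of $\Gamma-\Delta'$, while a component of $\Gamma-\Delta'$ contained in $\Delta''$ would have no edge leaving it (none to $\Delta'$ by the distance bound, none to other components) and hence would equal $\Gamma$, contradicting $V(\Gamma)-V(\Delta)\ne\emptyset$. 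Second, iterating the present dichotomy on $A$ and on $B$ --- the recursion halting because the number of pieces strictly drops --- decomposes $\Delta$ into pairwise ``far apart'' atoms $D_1,\ldots,D_r$, each $D_\alpha$ a union of $d_\alpha$ of the original pieces for which $\Rips_\Gamma$ of those pieces at parameter $L_{m,d_\alpha-1}$ is connected, $\Gamma-D_\alpha$ is connected, $|D_\alpha|\le m$, and, keeping track of which separation constant each split produces, $\dist_\Gamma(D_\alpha,D_\beta)>L_{m,d_\alpha}$ for every $\beta\ne\alpha$. By the definition of $L_{m,d_\alpha}$, any two vertices at distance $1$ from $D_\alpha$ are joined in $\Gamma-D_\alpha$ by a path of length at most $L_{m,d_\alpha}$. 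Now, given $a,b\in V(\Gamma)-V(\Delta)$ and a $\Gamma$-geodesic $p$ from $a$ to $b$, each maximal subpath of $p$ with interior in $\Delta$ has its interior in a single atom $D_\alpha$ (atoms being non-adjacent) and endpoints $x,y\in V(\Gamma)-V(\Delta)$ at distance $1$ from $D_\alpha$. Replace it by a path $q$ in $\Gamma-D_\alpha$ from $x$ to $y$ of length at most $L_{m,d_\alpha}$: each interior vertex $z$ of $q$ satisfies $\dist_\Gamma(z,x)\le L_{m,d_\alpha}-1$, whereas $\dist_\Gamma(x,D_\beta)\ge\dist_\Gamma(D_\alpha,D_\beta)-1\ge L_{m,d_\alpha}$, so $\dist_\Gamma(z,D_\beta)\ge 1$ for $\beta\ne\alpha$; together with $x,y\notin\Delta$ this shows $q\subseteq\Gamma-\Delta$. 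Concatenating these detours with the unaffected edges of $p$ and counting as before gives $\dist_{\Gamma-\Delta}(a,b)\le L_{m,n}\dist_\Gamma(a,b)$, since each $L_{m,d_\alpha}\le L_{m,n-1}\le L_{m,n}$.

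The surgery and the ``no edge leaving it'' connectivity argument are routine; the step I expect to require the most care is the iterated $\Rips$-decomposition and its bookkeeping --- proving that the atoms it produces are separated by \emph{strictly more} than the constants $L_{m,d_\alpha}$ governing detours around them, and that the detour lengths, bounded via the \emph{definition} of $L_{m,d_\alpha}$ (not via a multiplicative estimate), beat the separation by the one unit that the ``interior vertices lie strictly inside the detour'' observation provides. With that in hand the distance bound follows with room to spare, as $L_{m,n}\ge L_{m,n-1}$ absorbs the accumulated multiplier.
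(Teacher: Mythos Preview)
Your proof is correct and follows the same overall architecture as the paper's: split on whether $\Rips_\Gamma(\Delta_1,\ldots,\Delta_n;L_{m,n-1})$ is connected, and in either case do surgery on a $\Gamma$-geodesic. The connected case is identical. In the disconnected case the paper does a single split $\Delta=\Lambda_1\cup\Lambda_2$ and appeals to induction on $n$, while you unroll that induction into an explicit decomposition into ``atoms'' $D_1,\ldots,D_r$. Your version is more careful: the paper asserts that for $a,b$ at distance~$1$ from $\Lambda_1$ one has $\dist_{\Gamma-\Lambda_1}(a,b)\le L_{m,n-1}$ ``by definition of $L_{m,n-1}$'', but the definition only applies directly when $\Lambda_1$ itself has connected Rips graph at the right parameter, which need not hold after a single split. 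Your iterated decomposition produces pieces $D_\alpha$ that \emph{do} satisfy the Rips connectivity hypothesis of the definition of $L_{m,d_\alpha}$, and your separation bookkeeping ($\dist_\Gamma(D_\alpha,D_\beta)>L_{m,d_\alpha}$) is exactly what is needed to show the resulting detours stay in $\Gamma-\Delta$. So your argument can be read as supplying the details the paper's induction leaves implicit; the two proofs are morally the same, yours being the fully expanded form. One small point: your connectivity argument for $\Gamma-D_\alpha$ tacitly uses $L_{m,1}\ge 1$ (so that distinct atoms are non-adjacent), which holds in the ambient setting of the section but is worth noting.
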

\begin{proof}
Fix $m$. We argue by induction on $n$ that $L_{m,n}$ satisfies the property of the statement of the lemma. The base case $n=1$ is Lemma~\ref{lem:BaseCase}. Suppose $n>1$ and that the result holds for all $L_{m,k}$ if $n>k$. We consider two cases.
\emph{Case 1.} Suppose that $\mathsf{Rips}_\Gamma(\Delta_1,\ldots ,\Delta_n; L_{m, n-1})$ is disconnected. By Remark~\ref{rem:separation}, we can assume that there is $1\leq k< n$ such that the graph $\mathsf{Rips}_\Gamma(\Delta_1, \dots , \Delta_n; L_{m, n-1})$ is the disjoint union of $\mathsf{Rips}_\Gamma(\Delta_1, \dots , \Delta_k; L_{m, n-1})$ and
$\mathsf{Rips}_\Gamma(\Delta_{k+1}, \dots , \Delta_n; L_{m, n-1})$.
Let $\Lambda_1 = \Delta_1 \cup \cdots \cup \Delta_k$ and $\Lambda_2= \Delta_{k+1}\cup \cdots \cup \Delta_n$, and observe that
\[ \dist_\Gamma(\Lambda_1, \Lambda_2) >L_{m, n-1}.\]
Note that the subgraph $\Gamma-\Lambda_i$ is connected, $|\Lambda_i|\leq m$, and $\Lambda_i$ has at most $n-1$ connected components. By induction, the constant $L_{m,n-1}$ applies to both $\Lambda_i$. More specifically, suppose that $a,b \in V(\Gamma)$ satisfy that $\dist(a,\Lambda_1)=\dist(b,\Lambda_1)=1$. By definition of $L_{m,n-1}$, any geodesic path $q$ in $\Gamma-\Lambda_1$ between $a$ and $b$ has length at most $L_{m,n-1}$ and hence it does not intersect the subgraph $\Lambda_2$. It follows that $q$ is a path in $\Gamma-\Delta$. This last reasoning is symmetric in $\Lambda_1$ and $\Lambda_2$.
Let $v$ and $w$ be vertices of $\Gamma-\Delta$ and let $p$ be a geodesic in $\Gamma$ between them. Then each maximal subpath of $p$ with all internal vertices in $\Lambda_1$ (respectively, $\Lambda_2$) can be replaced with a subpath in $\Gamma-\Delta$ of length at most $L_{m, n-1}$. In this way, one can obtain a path between $v$ and $w$ of length at most $L_{m, n-1}\dist_\Gamma(v,w)$ in $\Gamma-\Delta$. Hence
\[\dist_{\Gamma-\Delta}(v,w)\leq L_{m,n-1} \dist_\Gamma(v,w) \leq L_{m,n} \dist_\Gamma (v,w).\]
\emph{Case 2.} Suppose that $\mathsf{Rips}_\Gamma(\Delta_1,\ldots ,\Delta_n; L_{n-1})$ is connected.
Let $v$ and $w$ be vertices of $\Gamma-\Delta$. Let $p$ be a geodesic path in $\Gamma$ between $v$ and $w$.
Then, by definition of $L_{m,n}$, one can replace each maximal subpath of $p$ with all interior vertices in $\Delta$ with subpaths in $\Gamma-\Delta$ of length at most $L_{m,n}$. This produces a path in $\Gamma-\Delta$ between $v$ and $w$ of length at most $L_{m,n} \dist_\Gamma(v,w)$, and hence $\dist_{\Gamma-\Delta}(v,w)\leq L_{m,n} \dist_\Gamma(v,w)$.
\end{proof}
\subsection{Proof of Theorem~\ref{1eN-A}}
For the rest of the section, let $\Gamma$ be a {\bf connected, vertex transitive, locally finite, one-ended and non-amenable} graph with Cheeger constant $h(\Gamma)$.
The proof of the theorem is divided into a sequence of lemmas. The hypothesis that $\Gamma$ is one-ended is only used in Lemma~\ref{lem:ValidMove}, while the other hypotheses on $\Gamma$ (in particular being non-amenable) are used throughout most of the argument. The section concludes by putting all the lemmas together to deduce the statement of the theorem.
\begin{remark}[Estimations using the Cheeger constant]\label{remark1}
Let $K$ be a finite subset of vertices and consider the subgraph $\Gamma-K$. If $\Delta$ is a connected component of $\Gamma-K$ then $\partial \Delta \subset \partial K$. Therefore
\begin{enumerate}
\item If $\Delta$ is a finite connected component, then
\[ |\Delta|\leq \frac{1}{h(\Gamma)} |\partial \Delta| \leq \frac1{h(\Gamma)} |\partial K| . \]
\item The number of connected components of $\Gamma-K$ is at most $|\partial K|$. As a consequence, the number of vertices of $\Gamma-K$ that belong to a finite connected component is at most $\frac1{h(\Gamma)}|\partial K|^2$.
\item If $\Delta$ is a connected subgraph of $\Gamma$ disjoint from $K$,
and $|\Delta|>\frac1{h(\Gamma)}|\partial K|$, then $\Delta$ is a subgraph of an unbounded connected component of $\Gamma-K$.
\end{enumerate}
\end{remark}
\begin{remark}[Notation for cardinality of spheres and balls]
Let $v$ be a fixed vertex of $\Gamma$. Let $\alpha (n)$ denote the number of vertices of $\Gamma$ at distance exactly $n$ from $v$, and let $\beta(n)$ denote the number of vertices of $\Gamma$ at distance at most $n$ from $v$. Since $\Gamma$ is vertex transitive, $\alpha(n)$ and $\beta(n)$ are independent of the choice of $v$, and we refer to them as the \emph{size of the spheres of radius $n$}, and the \textit{size of the balls of radius $n$}, respectively.
Since $\Gamma$ is necessarily an infinite graph, we have that $\beta\colon \ZZ_+ \to \ZZ_+$ is an increasing function and in particular $\lim_{n\to \infty}\beta(n)=\infty$.
\end{remark}
\begin{lemma}\label{lem:ValidMove}
For any positive integers $n$ and $\rho$, there is an integer $m=m(\Gamma, n,\rho)>0$ with the following property. If $\{c_1,\ldots, c_n\}$
is any collection of $n$ vertices of $\Gamma$, and $\Lambda$ is the
unbounded connected component of the subgraph $\Gamma-\bigcup_{i=1}^nB_\rho(c_i)$, then \[\dist_{\Lambda}(a,b) \leq L_{m,n}\dist_\Gamma (a,b)\]
for any pair of vertices $a,b$ of $\Lambda$, where
\[ m = n\beta(\rho) + \frac{( \alpha(\rho) \cdot d \cdot n )^2}{h(\Gamma)}\]
and $d$ is the degree of each vertex in $\Gamma$.
\end{lemma}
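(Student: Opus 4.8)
The plan is to realise $\Lambda$ as the complement $\Gamma-\Delta$ of a finite subgraph $\Delta$ having at most $m$ vertices and at most $n$ connected components, and then to quote the Undistorted Embedding result, Proposition~\ref{prop:RobberSpeed}. Put $K=\bigcup_{i=1}^{n}B_\rho(c_i)$ and let $\Lambda$ be the unbounded connected component of $\Gamma-K$; this component exists and is unique because $\Gamma$ is infinite, locally finite and one-ended. Let $\Delta$ be the subgraph induced on $V(\Gamma)\setminus V(\Lambda)$, so that $\Gamma-\Delta=\Lambda$ is connected and $V(\Delta)$ consists of $K$ together with all the \emph{finite} connected components of $\Gamma-K$; there are finitely many such components, each finite, so $\Delta$ is indeed a finite subgraph.

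First I would check that $\Delta$ is a disjoint union of at most $n$ connected subgraphs. Each ball $B_\rho(c_i)$ is connected, since a geodesic from $c_i$ to any vertex of the ball stays inside the ball, so the induced subgraph on $K$ has at most $n$ connected components. If $D$ is a finite component of $\Gamma-K$, then $D$ has a neighbour outside $D$ (otherwise $D$ would be all of $\Gamma$), and such a neighbour must lie in $K$; hence $D$ belongs, inside $\Delta$, to the same connected component as one of the components of $K$. Consequently $\Delta=\Delta_1\cup\cdots\cup\Delta_k$ with $k\le n$ pairwise disjoint connected subgraphs. Next I would bound $|\Delta|$. Clearly $|K|\le\sum_{i=1}^{n}|B_\rho(c_i)|=n\beta(\rho)$. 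A vertex $v\in K$ with a neighbour outside $K$ must satisfy $\dist(v,c_i)=\rho$ for every $i$ with $v\in B_\rho(c_i)$ (else all neighbours of $v$ lie in that ball), so $v$ lies in the union of the spheres of radius $\rho$ about $c_1,\dots,c_n$, a set of at most $n\,\alpha(\rho)$ vertices; since every vertex of $\Gamma$ has degree $d$, this gives $|\partial K|\le n\,\alpha(\rho)\,d$. By Remark~\ref{remark1}, the number of vertices of $\Gamma-K$ lying in finite components is at most $|\partial K|^2/h(\Gamma)\le(\alpha(\rho)\,d\,n)^2/h(\Gamma)$, so $|\Delta|\le n\beta(\rho)+(\alpha(\rho)\,d\,n)^2/h(\Gamma)=m$. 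The finiteness of $\Delta$ is precisely where non-amenability enters, while one-endedness is what guarantees that only finite components get swept into $\Delta$.

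Finally, $\Delta$ is a subgraph with at most $m$ vertices, $\Gamma-\Delta=\Lambda$ is connected, and $\Delta$ is a union of $k\le n$ pairwise disjoint connected subgraphs, so Proposition~\ref{prop:RobberSpeed} applies with these data and yields $\dist_\Lambda(a,b)=\dist_{\Gamma-\Delta}(a,b)\le L_{m,k}\,\dist_\Gamma(a,b)$ for all $a,b\in V(\Lambda)$; since $L_{m,k}\le L_{m,n}$ by the monotonicity established in Lemma~\ref{lem:DefLmn}, this is exactly the claimed inequality. I do not expect a genuine obstacle here: the argument is bookkeeping layered on top of the machinery already built in this section, and the only step requiring a little care is the estimate $|\partial K|\le n\,\alpha(\rho)\,d$ (rather than the cruder $n\,\beta(\rho)\,d$), which is what makes the stated value of $m$ come out.
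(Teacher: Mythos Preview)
Your proof is correct and follows essentially the same approach as the paper: define $\Delta$ as $K$ together with the finite components of $\Gamma-K$, bound $|\Delta|$ via Remark~\ref{remark1}, bound the number of components of $\Delta$ by $n$, and invoke Proposition~\ref{prop:RobberSpeed}. Your write-up is in fact slightly more careful than the paper's in two places: you justify the bound $|\partial K|\le n\,\alpha(\rho)\,d$ by explaining why a boundary vertex must lie on a sphere, and you handle the case $k<n$ explicitly via the monotonicity $L_{m,k}\le L_{m,n}$ from Lemma~\ref{lem:DefLmn}, whereas the paper applies Proposition~\ref{prop:RobberSpeed} directly with ``at most $n$ components''.
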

\begin{proof}
Let $\Delta$ be the subgraph of $\Gamma$ induced by all vertices in $\bigcup_{i=1}^nB_\rho(c_{i})$ together with all vertices that belong to bounded components of $\Gamma-\bigcup_{i=1}^nB_\rho(c_{i})$. Since $\Gamma$ is one-ended, the graph $\Gamma-\Delta$ is the unbounded connected component of $\Gamma-\bigcup_{i=1}^nB_\rho(c_{i})$. In particular, $\Lambda=\Gamma-\Delta$ is connected.
Let us argue that $\Delta$ has at most $m$ vertices. Observe that
\[ \left|\bigcup_{i=1}^n B_\rho(c_i)\right|\leq n\beta(\rho) \quad \text{and} \quad \left|\partial \bigcup_{i=1}^n B_\rho(c_i)\right|\leq \alpha(\rho) \cdot d \cdot n. \]
Hence, by Remark~\ref{remark1}, the number of vertices in $\Gamma - \bigcup_{i=1}^n B_\rho(c_i)$ that belong to bounded components is at most $\frac{1}{h(\Gamma)}\left( \alpha(\rho) \cdot n\cdot d\right)^2$ and therefore
\[ |\Delta| \leq n\beta(\rho) + \frac{1}{h(\Gamma)}\left( \alpha(\rho) \cdot d \cdot n)\right)^2 = m. \]
Now we argue that $\Delta$ has at most $n$ connected components. First observe that the subgraph of $\Gamma$ induced by a ball $B_\rho(c_i)$ is connected. Since $\Gamma$ is connected, every connected component of $\Gamma - \bigcup_{i=1}^n B_\rho(c_i)$ has a vertex adjacent to a vertex in $\bigcup_{i=1}^n B_\rho(c_i)$. It follows that every vertex of $\Delta$ is in a connected component containing a ball $B_\rho(c_i)$, and therefore $\Delta$ has at most $n$ connected components.
To summarize $\Delta$ is a subgraph of $\Gamma$ with at most $m$ vertices, at most $n$ connected components, and such that $\Gamma-\Delta$ is a connected subgraph. By Proposition~\ref{prop:RobberSpeed}, for any vertices $a,b\in \Gamma-\Delta$, we have that
\[ \dist_\Lambda(a,b)= \dist_{\Gamma-\Delta}(a,b)\leq L_{m,n}\dist_\Gamma(a,b).\qedhere\]
\end{proof}
\begin{lemma}[Safe Distance]\label{lem:lambda}
For any integers $n$ and $\rho$, there exists an integer $\lambda(\rho,n)$ such that for any collection of $(n+1)$ vertices, denoted as $r$ and $(c_1,\ldots,c_n)$, if $\dist(r,c_i)>\lambda(\rho,n)$ for every $i$, then $r$ lies in an unbounded component of $\Gamma-\bigcup_{i=1}^n B_{\rho}(c_{i})$.
\end{lemma}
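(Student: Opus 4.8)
The plan is to choose $\lambda(\rho,n)$ large enough that any connected component of $\Gamma-\bigcup_{i=1}^n B_\rho(c_i)$ staying at distance more than $\lambda(\rho,n)$ from every $c_i$ is forced to be unbounded; the essential input is that non-amenability, through Remark~\ref{remark1}, bounds the number of vertices of a bounded component of this complement.

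First I would fix notation. Let $d$ be the degree common to all vertices of the vertex-transitive graph $\Gamma$, and given cops $c_1,\dots,c_n$ write $K=\bigcup_{i=1}^n B_\rho(c_i)$, a finite set of vertices since $\Gamma$ is locally finite (each ball has exactly $\beta(\rho)$ vertices). As $\partial(\cdot)$ is subadditive and every edge in $\partial B_\rho(c_i)$ issues from a vertex of the sphere of radius $\rho$ about $c_i$, we get
\[ |\partial K|\ \leq\ \sum_{i=1}^n |\partial B_\rho(c_i)|\ \leq\ n\,\alpha(\rho)\,d . \]
I then take $\lambda(\rho,n)$ to be any integer with $\lambda(\rho,n)\geq \rho + n\,\alpha(\rho)\,d/h(\Gamma)$; explicitly one may set $\lambda(\rho,n)=\rho+\ceil*{n\,\alpha(\rho)\,d/h(\Gamma)}$.

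Next, suppose $\dist(r,c_i)>\lambda(\rho,n)$ for every $i$. Since $\lambda(\rho,n)\geq\rho$, we have $r\notin K$; let $\Delta$ be the connected component of $\Gamma-K$ containing $r$ and assume, for contradiction, that $\Delta$ is bounded, hence finite by local finiteness. Because $\Gamma$ is connected and $K\neq\emptyset$, $\Delta$ contains a vertex $v$ adjacent to some $w\in B_\rho(c_j)$. Applying part (1) of Remark~\ref{remark1} to the finite component $\Delta$ of $\Gamma-K$ gives $|\Delta|\leq |\partial K|/h(\Gamma)\leq n\,\alpha(\rho)\,d/h(\Gamma)$, and since $\Delta$ is connected on $|\Delta|$ vertices, $\dist_\Gamma(r,v)\leq |\Delta|-1$. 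Hence
\[ \dist(r,c_j)\ \leq\ \dist_\Gamma(r,v)+\dist_\Gamma(v,w)+\dist_\Gamma(w,c_j)\ \leq\ (|\Delta|-1)+1+\rho\ \leq\ \frac{n\,\alpha(\rho)\,d}{h(\Gamma)}+\rho\ \leq\ \lambda(\rho,n), \]
contradicting $\dist(r,c_j)>\lambda(\rho,n)$. Therefore $\Delta$ is unbounded, so $r$ lies in an unbounded component of $\Gamma-\bigcup_{i=1}^n B_\rho(c_i)$ (which, $\Gamma$ being one-ended, is the unique such component).

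I do not expect a real obstacle: the whole argument rests on Remark~\ref{remark1}. The only points requiring a little care are that $\alpha(\rho)$, $\beta(\rho)$ and $d$ are finite and independent of the cops' positions --- exactly what local finiteness and vertex transitivity guarantee --- that $|\partial K|$ is controlled through the individual balls rather than through $K$ directly, and that here one only needs the estimate for the single component containing $r$, which is the easy linear case of Remark~\ref{remark1} rather than the quadratic bound on the full bounded part used in Lemma~\ref{lem:ValidMove}.
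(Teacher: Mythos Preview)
Your argument is correct. Both you and the paper bound $|\partial K|\leq n\,\alpha(\rho)\,d$ and then invoke Remark~\ref{remark1}, but the two proofs run in opposite directions. The paper picks $\lambda(\rho,n)=N$ to be the least integer with $\beta(N)>\frac{1}{h(\Gamma)}\,n\,\alpha(\rho)\,d$, observes that the ball $B_N(r)$ is a connected subgraph of size $\beta(N)$ disjoint from $K$, and applies part~(3) of Remark~\ref{remark1} to conclude that this ball---hence $r$---sits in an unbounded component. You instead assume the component $\Delta$ containing $r$ is finite, use part~(1) of Remark~\ref{remark1} to bound $|\Delta|$, convert this into a diameter bound, and derive a contradiction with $\dist(r,c_j)>\lambda(\rho,n)$. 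Your route avoids the growth function $\beta$ and gives an explicit formula for $\lambda$; the paper's route is a one-line application of the prepackaged statement~(3). The underlying content is the same Cheeger estimate, just read forwards versus by contradiction.
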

\begin{proof}
Let $d$ be the degree of vertices of $\Gamma$.
Define $\lambda(\rho, n)=N$ as the least integer such that \[\beta (N)> \frac1{h(\Gamma)} (\alpha (\rho) \cdot d \cdot n).\]
Let $r$ and $(c_1,\ldots , c_n)$ be $n+1$ arbitrary vertices of $\Gamma$ such that $\dist(r,c_i)>\lambda(\rho,n)$ for every $i$.
Let $K=\bigcup_{i=1}^n B_\rho (c_i)$ and observe that
\[
\left|\partial K\right| \leq \sum_{i=1}^n |\partial B_{\rho}(c_i)|\leq n\cdot d \cdot \alpha(\rho) .
\]
Let $\Delta$ be the subgraph induced by $B_N (r)$. Observe that $\Delta$ is connected and $|\Delta|=\beta(N)> \frac1{h(\Gamma)}|\partial K|$. By the third item of Remark~\ref{remark1}, the subgraph $\Delta$ lies inside the unbounded connected component of $\Gamma-K$ that contains the vertex $r$.
\end{proof}
\begin{lemma}[Robber's strategy safe points]\label{lem:spoint}
Let $u_0$ be a fixed vertex. For any positive integers $n$, $\sigma$ and $\rho$, there exist integers $R>0$ and $D>\rho$, and there are $n+1$ vertices $\{s_1,\ldots, s_{n+1} \}$ such that the following properties hold:
\begin{enumerate}
\item For every $i$, $\dist(u_0,s_i)\leq R$
\item For any collection of $n$ vertices $\{c_1,\ldots, c_n \}$,
there is a vertex $s$ in $\{s_1,\ldots, s_{n+1} \}$ such that $\dist (s ,\{c_1,\ldots, c_n\})>D+\sigma$.
\item If $s\in \{s_1,\ldots, s_{n+1} \}$ satisfies $\dist (s ,\{c_1,\ldots, c_n\})>D$, then $s$ belongs to an unbounded component of $\Gamma-\bigcup_{i=1}^n B_{\rho}(c_i)$.
\end{enumerate}
\end{lemma}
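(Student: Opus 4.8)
The plan is to take the $n+1$ safe points to be vertices that are pairwise much farther apart than the relevant scale $D+\sigma$, so that no single cop can be close to two of them; to fix $D$ large enough that it simultaneously exceeds $\rho$ and the threshold $\lambda(\rho,n)$ coming from the Safe Distance lemma; and to define $R$ only afterwards, as the largest distance from $u_0$ to one of the chosen points. The key external input is Lemma~\ref{lem:lambda}, and the combinatorial heart of the argument is a pigeonhole principle: $n$ cops cannot each be within $D+\sigma$ of one of $n+1$ well-separated points without two cops crowding a single point, which is impossible.

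First I would set $D = \max\{\rho + 1,\ \lambda(\rho, n)\}$, where $\lambda(\rho,n)$ is the constant from Lemma~\ref{lem:lambda}. Then $D > \rho$, and item~(3) is immediate: if $s$ is any vertex with $\dist(s, \{c_1, \ldots, c_n\}) > D$, then $\dist(s, c_i) > \lambda(\rho, n)$ for every $i$, so by Lemma~\ref{lem:lambda} the vertex $s$ lies in an unbounded component of $\Gamma - \bigcup_{i=1}^n B_\rho(c_i)$.

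Next I would construct the points. Since $\Gamma$ is infinite and locally finite, every ball in $\Gamma$ is finite and so is every finite union of balls; hence one may choose vertices $s_1, s_2, \ldots, s_{n+1}$ inductively with $s_{k+1} \notin \bigcup_{i=1}^{k} B_{2(D+\sigma)}(s_i)$, and the resulting vertices satisfy $\dist(s_i, s_j) > 2(D+\sigma)$ for all $i \neq j$. Put $R = \max_{1 \leq i \leq n+1} \dist(u_0, s_i)$, which is a finite integer; this gives item~(1). For item~(2), let $\{c_1, \ldots, c_n\}$ be any $n$ vertices. If some $c_j$ satisfied $\dist(c_j, s_i) \leq D+\sigma$ and $\dist(c_j, s_k) \leq D+\sigma$ for distinct $i, k$, the triangle inequality would give $\dist(s_i, s_k) \leq 2(D+\sigma)$, contradicting the separation of the $s_i$. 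Hence each $c_j$ is within distance $D+\sigma$ of at most one of the $n+1$ points, so the $n$ cops together are within $D+\sigma$ of at most $n$ of them, and the remaining point $s \in \{s_1, \ldots, s_{n+1}\}$ satisfies $\dist(s, c_j) > D+\sigma$ for every $j$, that is, $\dist(s, \{c_1, \ldots, c_n\}) > D+\sigma$.

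The proof is a straightforward pigeonhole argument fed into Lemma~\ref{lem:lambda}, so there is no serious obstacle; the only point requiring care is bookkeeping the constants, so that a single value of $D$ handles the $D+\sigma$ separation demanded by item~(2), the threshold $\lambda(\rho,n)$ demanded by item~(3), and the inequality $D > \rho$ all at once, together with the observation that $R$ is chosen \emph{after} the $s_i$, so item~(1) comes for free once the points exist.
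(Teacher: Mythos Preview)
Your proof is correct and follows essentially the same approach as the paper: choose $D$ exceeding both $\rho$ and $\lambda(\rho,n)$, pick $n+1$ vertices pairwise separated by more than $2(D+\sigma)$ using local finiteness and infiniteness of $\Gamma$, set $R$ afterward as the maximum distance to $u_0$, and deduce item~(2) by pigeonhole and item~(3) from Lemma~\ref{lem:lambda}. The only cosmetic differences are that the paper takes $s_1=u_0$ and chooses $D$ strictly larger than $\max\{\rho,\lambda(\rho,n)\}$ rather than your $D=\max\{\rho+1,\lambda(\rho,n)\}$, neither of which affects the argument.
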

\begin{proof} Let $\lambda(\rho, n)$ be the constant provided by Lemma~\ref{lem:lambda}. Let \[D>\max \{ \lambda(\rho,n), \rho \}. \]
Since $\Gamma$ is a locally finite, infinite and connected graph, we can let $s_1=u_0$ and choose inductively vertices $s_i$ such that $\dist(s_i,\{s_1,\ldots ,s_{i-1}\})> 2D+2\sigma$ to obtain a collection of $n+1$ vertices $s_1,\ldots ,s_n, s_{n+1}$ with the property that $\dist(s_i, s_j)> 2D+2\sigma$ if $i\neq j$. Let $R=\max\{\dist(u_0, s_i)\mid 1\leq i\leq n+1\}$. The first item of the lemma is immediate.
Let $\{c_1,\ldots ,c_n\}$ be $n$ vertices. First we prove that there is $s\in\{s_1,\ldots ,s_n\}$ such that $\dist(s,\{c_1,\ldots, c_n\})>D+\sigma$. Suppose by contradiction that this is not the case, that is, for each $1\leq i\leq n+1$, there exists at least one $c_j$ such that $\dist (s_i,c_j)\leq D+\sigma$. By the pigeon-hole argument, there must be $c_i$ and two distinct $s_j$ and $s_k$ such that $\dist (s_j,c_i)\leq D+\sigma$ and $\dist (s_k,c_i)\leq D+\sigma$. It follows that
\[ \dist(s_j,s_k)\leq \dist(s_j,c_i) + \dist (s_k,c_i) \leq 2D+2\sigma, \]
which contradicts the properties of the set $\{s_1, \ldots , s_{n+1}\}$.
Let $s\in\{s_1,\ldots ,s_n\}$ be such that $\dist(s,c_i)>D$ for $1\leq i\leq n$. Since $D>\lambda_{\Gamma}(\rho,n)$, Lemma~\ref{lem:lambda} implies that $s$ lies in an unbounded component of $\Gamma-\bigcup_{i=1}^n B_{\rho}(c_{i})$.
\end{proof}
\begin{proof}[Proof of Theorem~\ref{1eN-A}]
Let $\Gamma$ be the given graph and let $\sigma,\rho,u_0$, be fixed parameters. We will prove that for every $n$ there exists an $R$ and $\psi$ such that $\Gamma$ is not $n$-weak cop win. Let $n>0$. First, let us define the parameters $R$ and $\psi$.
Consider a collection of vertices $\{s_1, \ldots , s_{n+1}\}$ and the integers $R$ and $D$ provided by Lemma~\ref{lem:spoint} for the parameters $u_0,n,\sigma, \rho$. Let
\[ \psi = 2R L_{m,n}\]
where $L_{m,n}$ is the constant provided by Proposition~\ref{prop:RobberSpeed} for
\[ m = n\beta(\rho) + \frac{( \alpha(\rho)\cdot n\cdot d)^2}{h(\Gamma)}.\]
Consider the game in $\Gamma$ with parameters $(n,\sigma,\rho,\psi,u_0,R)$.
We will prove that the robber has a strategy such that it is never captured, and at any stage of the game his position is a vertex in $\{s_1, \ldots , s_{n+1}\}$. Since $\dist_\Gamma(u_0, \{s_1, \cdots , s_{n+1}\})<R$, this will be a winning strategy for the robber.
The robber will move in the following way:
\begin{itemize}
\item Suppose the $n$ cops have chosen their initial positions, say $c_{1,0}, \ldots c_{n,0}$.
By the second item of Lemma~\ref{lem:spoint}, there is a vertex
\[r_0\in \{s_1,\ldots ,s_{n+1}\}\] such that
\[\dist_\Gamma(s_i,\{c_{1,0},\ldots,c_{n,0} \} )>D+\sigma.\]
Let $r_0$ be the initial position of the robber. Since $\Gamma$ is one-ended, $\Gamma-\bigcup_{i=1}^n B_\rho(c_{i,0})$ has one unbounded connected component. The third item of Lemma~\ref{lem:spoint} implies that $r_0$ is in the unbounded component of $\Gamma-\bigcup_{i=1}^n B_\rho(c_{i,0})$.
\item Let $r_k$ and $c_{1,k},\ldots ,c_{n,k}$ denote the positions of the robber and the cops at the end of the $k$-stage, respectively. Suppose that
\[r_k \in \{s_1,\ldots ,s_{n+1}\}, \]
and \[\dist_\Gamma(r_k,\{c_{1,k},\ldots,c_{n,k} \} )>D+\sigma.\]
Then, at the beginning of the $(k+1)$-stage, the cops move first. The last inequality implies that \[\dist_\Gamma(r_k,\{c_{1,k+1},\ldots,c_{n,k+1} \} )>D>\rho\]
and hence the robber has not been captured.
By the second item of Lemma~\ref{lem:spoint}, there is a vertex $r_{k+1}\in \{s_1,\ldots ,s_{n+1}\}$ such that \[\dist_\Gamma(r_{k+1},\{c_{1,k+1},\ldots,c_{n,k+1} \} )>D+\sigma>\rho.\]
Now we argue that the robber has a valid move from $r_k$ to $r_{k+1}$.
Since $\Gamma$ is one-ended, $\Gamma-\bigcup_{i=1}^n B_\rho(c_{i,k+1})$ has only one unbounded connected component that we denote by $\Lambda$.
The last two inequalities of the previous paragraph together with the third item of Lemma~\ref{lem:spoint} imply that $r_k$ and $r_{k+1}$ are elements of the unbounded component $\Lambda$ of the subgraph $\Gamma-\bigcup_{i=1}^nB_\rho(c_{i,k+1})$.
By Lemma~\ref{lem:ValidMove}, we have that
\[ \dist_\Lambda(r_k, r_{k+1}) \leq L_{m,n} \dist_\Gamma(r_k, r_{k+1})\leq 2R\cdot L_{m,n}=\psi\]
where the second inequality follows from $\dist_\Gamma(u_0, \{s_1,\ldots ,s_{n+1}\})<R$. Hence there is a path in $\Lambda \subset \Gamma-\bigcup_{i=1}^n B_\rho(c_{k+1,i})$ from $r_k$ to $r_{k+1}$ of length at most $\psi$, and hence this is a valid move for the robber. \qedhere
\end{itemize}
\end{proof}
\section{\boldmath\texorpdfstring{$\Theta_n$}{Theta n}-extensions}
\label{sec:ThetaExtensions}
Let $\Gamma$ be a connected graph, let $u_0$ be a vertex of $\Gamma$, and let $n$ be a positive integer. The \emph{$\Theta_n$-extension of $\Gamma$ (centered at $u_0$)}, denoted as $\Theta_n(\Gamma)$, is the graph constructed as follows. Let $\Gamma_k$ for $1 \leq k\leq n$ be disjoint graphs isomorphic to $\Gamma$, and let $\eta_k\colon \Gamma \to \Gamma_k$ be a fixed isomorphism. Then $\Theta_n(\Gamma)$ is the disjoint union of $\Gamma_1\sqcup\cdots \sqcup \Gamma_{n}$ together with a collection of paths: a path (called a bridge) between $\eta_i(x)$ to $\eta_j(x)$ of length $\dist_\Gamma(u_0,x)+1$ for each vertex $x$ of $\Gamma$ and each $1\leq i<j\leq n$. See Figures~\ref{fig:Theta2Z} and~\ref{fig:Theta3Z} for some examples.
More formally, the graph $\Theta_n(\Gamma)$ is defined as follows. For $1\leq i<j\leq n$ and $x\in V(\Gamma)$, let $l=\dist_\Gamma(u_0, x)$,
let
\[ V_{i,j,x}=\{ x^{i,j}_1, \ldots x^{i,j}_l \} \]
and let
\[ E_{i,j,x}=\left\{ \{\eta_i(x),x^{i,j}_1\},
\{x^{i,j}_1, x^{i,j}_2 \}, \ldots \{x^{i,j}_{l-1}, x^{i,j}_l \} ,\{x^{i,j}_l, \eta_j(x)\} \right\}.\]
If $l=0$ then $V_{i,j,x}=\emptyset$ and $E_{i,j,x}$ consists of the single edge $\{\eta_i(x),\eta_j(x)\}$.
Then the vertex set and edge set of $\Theta_n(\Gamma)$ are defined as the disjoint unions
\[ V(\Theta_n(\Gamma)) = \bigsqcup_{i=1}^{n} V(\Gamma_i) \sqcup \bigsqcup_{x\in V(\Gamma)}\bigsqcup_{1\leq i<j\leq N} V_{i,j,x} \]
and
\[ E(\Theta_n(\Gamma)) = \bigsqcup_{i=1}^{n} E(\Gamma_i) \sqcup \bigsqcup_{x\in v(\Gamma)}\bigsqcup_{1\leq i<j\leq n} E_{i,j,x}. \]
For each $x\in \Gamma$, the path $[\eta_i(x), x^{i,j}_1, \ldots , x^{i,j_k}, \eta_j(x)]$ is called the \emph{bridge} between $\eta_i(x)$ and $\eta_j(x)$. In particular, any vertex of $\Theta_n(\Gamma)$ belongs to at least one bridge. If $v$ is a vertex of $\Theta_n(\Gamma)$ in the bridge between $\eta_i(x)$ and $\eta_j(x)$, then the set
\[\mathcal{W}(v)=\{\eta_1(x), \ldots , \eta_{n}(x)\}\]
is called the \emph{shadow of $v$}, and $\eta_i(x)$ is the \emph{shadow of $v$ on $\Gamma_i$}, and $x$ is the \emph{shadow of $v$ on $\Gamma$}.
Observe that the $\Theta_n$-extension of $\Gamma$ depends on the vertex $u_0$. Note that if $\Gamma$ is vertex transitive, then the isomorphism type of $\Theta_n(\Gamma)$ is independent of the choice of $u_0$.
\begin{theorem}\label{TcopN}
For any connected graph $\Gamma$ and for any integer $n>0$,
\[ \wcop(\Gamma) \leq \wcop(\Theta_n(\Gamma)) \leq n\cdot\wcop(\Gamma)\] and
\[\scop(\Gamma) \leq \scop(\Theta_n(\Gamma))\leq n\cdot\scop(\Gamma).\]
\end{theorem}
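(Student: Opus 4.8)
The plan is to treat each displayed inequality as two separate claims: the lower bounds come from Theorem~\ref{corocopN1}, the upper bounds from running $n$ parallel copies of a winning cop strategy for $\Gamma$. For the lower bounds, let $\eta_i\colon\Gamma\to\Gamma_i$ be the isomorphisms from the construction, let $\imath\colon V(\Gamma)\to V(\Theta_n(\Gamma))$ be $\eta_1$ followed by the inclusion $V(\Gamma_1)\hookrightarrow V(\Theta_n(\Gamma))$, and let $\pi\colon V(\Theta_n(\Gamma))\to V(\Gamma)$ send each vertex to its shadow on $\Gamma$, so $\pi$ restricts to $\eta_i^{-1}$ on each $\Gamma_i$ and collapses the bridge between $\eta_i(x)$ and $\eta_j(x)$ to the single vertex $x$. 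Since $\Gamma_1$ is a subgraph, $\imath$ is distance non-increasing; since $\pi$ sends adjacent vertices to equal or adjacent vertices, it is $(1,0)$-Lipschitz; and $\pi\circ\imath=\mathrm{id}_{V(\Gamma)}$, so $(\imath,\pi)$ is a quasi-retraction of $\Theta_n(\Gamma)$ onto $\Gamma$ and Theorem~\ref{corocopN1} gives the two lower bounds. I would also record the distance estimate $\dist_\Gamma(\pi(v),u_0)\le\dist_{\Theta_n(\Gamma)}(v,\imath(u_0))\le 2\dist_\Gamma(\pi(v),u_0)+2$, whose left half (immediate from $(1,0)$-Lipschitzness of $\pi$) reduces ``keeping the robber out of a ball about $\imath(u_0)$'' to ``keeping the shadow $\pi(r_k)$ out of a ball about $u_0$''.

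For the upper bounds, assume $\wcop(\Gamma)=k<\infty$ (the infinite case being vacuous) and fix a speed $\sigma_0$ and reach $\rho_0$ with which $k$ cops win every instance of the $\Gamma$-game protecting balls about $u_0$. I would deploy $nk$ cops on $\Theta_n(\Gamma)$, partitioned into teams $T_1,\dots,T_n$ of $k$ cops, with speed $\sigma:=\sigma_0$ and reach $\rho$ chosen large in terms of $\sigma_0,\rho_0,n$ only --- and \emph{not} in terms of the robber's radius $R$. By the reduction lemma of Section~\ref{sec:QuasiRetractions} it is enough to protect $B_R(\imath(u_0))$ for an arbitrary robber speed $\psi$ and radius $R$, and by the estimate above it is enough to force $\pi(r_k)$ eventually outside $B_R(u_0)$. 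Team $T_i$ keeps its cops inside $\Gamma_i$ and runs the winning $k$-cop strategy for the $\Gamma$-game on $\Gamma_i\cong\Gamma$, with target ball $B_R(u_0)$ and robber speed $\psi$, against the virtual robber $\eta_i(\pi(r_k))$; whenever the real robber is outside $\Gamma_i$, team $T_i$ suspends (freezing its cops and virtual robber) and, upon the robber's return, resumes, restarting the $\Gamma$-strategy from the re-entry vertex. If the robber sits on a bridge between $\eta_i(x)$ and $\eta_j(x)$, teams $T_i$ and $T_j$ instead send cops out of their copies to the endpoints $\eta_i(x),\eta_j(x)$ and squeeze the robber along this finite path.

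The verification has four ingredients. (i) A $\Gamma$-legal cop move of speed $\sigma_0$ inside $\Gamma_i$ is a legal $\Theta_n(\Gamma)$-move, since $\Gamma_i$ is isometrically embedded. (ii) While the real robber is inside $\Gamma_i$, a $\Theta_n(\Gamma)$-legal robber move avoiding reach $\rho\ge\rho_0$ of all cops projects under $\pi$ to a $\Gamma$-legal virtual move avoiding reach $\rho_0$ of team $T_i$'s virtual cops, and a frozen virtual robber moves legally for free. (iii) If team $T_i$'s simulated game reaches a capture, the real robber --- which is in $\Gamma_i$ at that instant --- lies within $\sigma+\rho$ of a cop and is caught; and if team $T_i$ has forced its virtual robber permanently out of $B_R(u_0)$, then $\pi(r_k)\notin B_R(u_0)$ whenever the robber is in $\Gamma_i$, and --- since the robber can enter a bridge only through an endpoint whose shadow is pinned by the relevant team's state --- also whenever the robber is on a bridge whose base lies outside $B_R(u_0)$. (iv) The only bridges on which the robber can keep $\pi(r_k)$ inside $B_R(u_0)$ are those based inside $B_R(u_0)$, finitely many in number, which the cops clear by converging from the copies as above. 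Assembling (i)--(iv): either the robber is captured, or the shadow is eventually confined outside $B_R(u_0)$, so the cops win; hence $\wcop(\Theta_n(\Gamma))\le nk$. The $\scop$ inequality is proved identically, with $k=\scop(\Gamma)$ and with $\rho$ chosen after the robber's speed, in terms of the $\psi$-dependent reach $\rho_0(\psi)$.

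The step I expect to be the real obstacle is the bookkeeping inside (iii)--(iv) around the robber's use of bridges to hop between copies: after leaving $\Gamma_i$, touring other copies and bridges, and re-entering $\Gamma_i$ far from where it left, team $T_i$'s virtual robber appears to teleport, and one must argue that restarting team $T_i$'s strategy --- and, more generally, the teams' repeated incursions to flush the robber off short bridges --- does not prevent the teams from collectively succeeding. The pivotal observation is that a re-entry into $\Gamma_i$ deep inside $B_R(u_0)$, or prolonged lingering on a short bridge, places the real robber close to some $\eta_i(u_0)$, hence --- the hub vertices $\eta_1(u_0),\dots,\eta_n(u_0)$ being pairwise adjacent in $\Theta_n(\Gamma)$ --- close to $\imath(u_0)$ and therefore inside $B_R(\imath(u_0))$, contradicting that the robber avoids capture while re-entering $B_R(\imath(u_0))$ infinitely often. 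Calibrating $\rho$ and the team strategies so that ``robber near $\eta_i(u_0)$'' and ``robber near $\imath(u_0)$'' agree up to the fixed additive constant $\rho$, uniformly in the robber-chosen $R$, is the delicate part that the proof must handle.
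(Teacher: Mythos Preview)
Your lower-bound argument via the quasi-retraction $(\imath,\pi)$ is correct and is exactly the paper's Proposition~\ref{thetaqr}.

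The upper-bound argument has a genuine gap, and the ``delicate part'' you flag is not a detail but a failure of the scheme. With independent teams that suspend and \emph{restart} upon re-entry, the robber defeats you by cycling: enter $\Gamma_1$, let $T_1$ begin its strategy, exit via a long bridge to $\Gamma_2$, wander, and re-enter $\Gamma_1$ at some $\eta_1(x)$ with $\dist_\Gamma(u_0,x)$ just below $R$. This forces $T_1$ to restart from scratch, and the robber repeats indefinitely. Your ``pivotal observation'' does not bite: such a re-entry point is \emph{not} close to $\eta_1(u_0)$ --- the bridge used has length about $\dist_\Gamma(u_0,x)\approx R$, so the robber stays far from every $\eta_k(u_0)$ throughout --- and since $R$ is chosen by the robber after $\rho$, this distance cannot be absorbed into the reach. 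Nor does clause~(iv) help: even when $\Gamma$ is locally finite there may be enormously many bridges based in $B_R(u_0)$, each reusable, and your teams must abandon their $\Gamma$-strategies to flush them, triggering further restarts.

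The fix, which is the paper's Proposition~\ref{lemTHn}, is to drop the independence entirely. The shadow $\pi(r_t)$ is defined at \emph{every} stage, whether the real robber sits in some $\Gamma_k$ or on a bridge, so one runs a \emph{single} $\Gamma$-game against this shadow and has team $T_k$ mirror the resulting cop positions into $\Gamma_k$ via $\eta_k$, all $n$ teams in lockstep. There is then no suspension and no teleportation: a legal robber path in $\Theta_n(\Gamma)$ projects under $\pi$ to a walk in $\Gamma$ of at most the same length, and every vertex of the real path lies either in some $\Gamma_k$ --- where the avatar $\eta_k(c)$ of each $\Gamma$-cop $c$ guards it --- or on a bridge that the path entered through such a vertex; hence the projected walk avoids the $\rho$-neighbourhood of every $\Gamma$-cop and is a legal $\Gamma$-move. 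When the single $\Gamma$-strategy captures the shadow, the real robber is pinned on one bridge with cop-avatars within reach of both endpoints (the paper's ``zugzwang'') and is then captured; when it pushes the shadow out of $B_R(u_0)$, your own estimate gives $r_t\notin B_R(\eta_1(u_0))$. The original parameters $\sigma=\sigma_0$ and $\rho=\rho_0$ suffice, with no enlargement.
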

The proof of the theorem is divided into two propositions below which provide the lower and upper bounds for the cop numbers of $\Theta_n(\Gamma)$, respectively.
\begin{proposition}\label{thetaqr}
For any connected graph $\Gamma$ and $n>0$,
\[ \wcop(\Gamma) \leq \wcop(\Theta_n(\Gamma))\] and \[\scop(\Gamma) \leq \scop(\Theta_n(\Gamma)).\]
\end{proposition}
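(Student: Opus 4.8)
The plan is to exhibit $\Gamma$ as a quasi-retract of $\Theta_n(\Gamma)$ and then invoke Theorem~\ref{corocopN1}. First note that $\Theta_n(\Gamma)$ is connected: each copy $\Gamma_i$ is connected, and for the vertex $x=u_0$ the bridge between $\eta_i(u_0)$ and $\eta_j(u_0)$ is a single edge (the case $l=0$), so the copies $\Gamma_1,\ldots,\Gamma_n$ are all joined to one another. Thus $\Theta_n(\Gamma)$ is a connected graph and Theorem~\ref{corocopN1} is applicable once we produce the quasi-retraction.

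For the embedding I would take $f=\eta_1\colon V(\Gamma)\to V(\Theta_n(\Gamma))$. Any path in $\Gamma$ between vertices $x$ and $y$ maps under $\eta_1$ to a path in $\Gamma_1\subseteq\Theta_n(\Gamma)$ of the same length, so $\dist_{\Theta_n(\Gamma)}(\eta_1(x),\eta_1(y))\leq\dist_\Gamma(x,y)$ and $f$ is $(1,0)$-Lipschitz. For the retraction I would define $g\colon V(\Theta_n(\Gamma))\to V(\Gamma)$ to send each vertex to its shadow on $\Gamma$: if $v=\eta_i(x)$ put $g(v)=x$, and if $v=x^{i,j}_k$ is an interior vertex of the bridge between $\eta_i(x)$ and $\eta_j(x)$ put $g(v)=x$. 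This is well defined because $V(\Theta_n(\Gamma))$ is the disjoint union of the $V(\Gamma_i)$ and the interior bridge vertices, and an interior bridge vertex $x^{i,j}_k$ determines $x$ uniquely. To check that $g$ is $(1,0)$-Lipschitz it is enough to verify that $g$ sends the two endpoints of every edge of $\Theta_n(\Gamma)$ to vertices at distance at most $1$ in $\Gamma$: an edge of some $E(\Gamma_i)$ is $\eta_i$ of an edge of $\Gamma$, so its endpoints have shadows at distance $1$; an edge in a bridge set $E_{i,j,x}$ has both endpoints with shadow $x$ (this includes the extreme edges $\{\eta_i(x),x^{i,j}_1\}$ and $\{x^{i,j}_l,\eta_j(x)\}$, and the degenerate edge $\{\eta_i(x),\eta_j(x)\}$ when $\dist_\Gamma(u_0,x)=0$), so the shadows coincide. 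Finally $g(f(x))=g(\eta_1(x))=x$ for every $x\in V(\Gamma)$, so $g\circ f$ is the identity on $V(\Gamma)$.

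Hence $(f,g)$ is a quasi-retraction of $\Theta_n(\Gamma)$ to $\Gamma$ with constants $(1,0)$; in particular $\Gamma$ is a quasi-retract of $\Theta_n(\Gamma)$, and Theorem~\ref{corocopN1} yields $\wcop(\Gamma)\leq\wcop(\Theta_n(\Gamma))$ and $\scop(\Gamma)\leq\scop(\Theta_n(\Gamma))$, as claimed.

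There is no serious obstacle in this argument; a direct strategy-translation proof is also possible, but routing through Theorem~\ref{corocopN1} is cleaner. The only point deserving a little care is that $g$ is \emph{not} a graph retraction in the classical sense --- it does not fix a subgraph pointwise and indeed collapses each bridge to a single vertex of $\Gamma$ --- so one cannot simply quote the graph-retraction item of the earlier example and must instead verify the quasi-retraction inequalities directly from the combinatorics of the bridges, as above.
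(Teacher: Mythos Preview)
Your proof is correct and follows essentially the same approach as the paper: exhibit the pair $(\eta_1,g)$, where $g$ sends each vertex to its shadow on $\Gamma$, as a $(1,0)$-quasi-retraction of $\Theta_n(\Gamma)$ onto $\Gamma$, and then apply Theorem~\ref{corocopN1}. Your version is slightly more detailed (you verify connectedness of $\Theta_n(\Gamma)$ and check the Lipschitz condition for $g$ edge by edge), but the argument is the same.
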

By Theorem~\ref{corocopN1}, to prove Proposition~\ref{thetaqr}, it is enough to show that $\Theta_n(\Gamma)$ quasi-retracts to $\Gamma$. Observe that the functions $\eta_k\colon \Gamma \to \Theta_n(\Gamma)$ preserve distances, that is,
\[ \dist_{\Theta_n(\Gamma)} (\eta_k(x), \eta_k(y)) = \dist_{\Gamma}(x,y) \]
for any pair of vertices $x,y$ of $\Gamma$. In particular they are $(1,0)$-Lipschitz. On the other hand, note that the function $g\colon \Theta_n(\Gamma) \to \Gamma$ that maps any vertex of a bridge between $\eta_i(x)$ and $\eta_j(x)$ to the vertex $x$ is also $(1,0)$-Lipschitz. Since $g\circ \eta_k$ is the identity map on $\Gamma$, it follows that the pair $(\eta_k, g)$ is a $(1,0)$-quasi-retraction of $\Theta_n(\Gamma)$ into $\Gamma$.
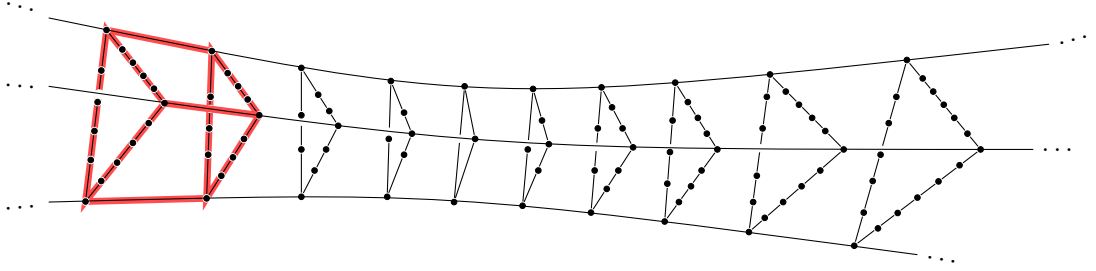
\begin{figure}
	\centering
\begin{tikzpicture}[scale=0.7]
\draw[line width=0.09cm,red!70](-7.3,2.88)--(-8.4,4.27)--(-8.8,1.01)--cycle ;
\draw[line width=0.09cm,red!70](-5.5,2.65)--(-6.4,3.87)--(-6.5,1.07)--cycle ;
\draw[line width=0.09cm,red!70](-8.4,4.27)--(-6.4,3.87);
\draw[line width=0.09cm,red!70](-8.8,1.01)--(-6.5,1.07);
\draw (-1.6,3.2)--(-1.4,2.2)--(-1.8,1)--cycle;
\draw (-0.3,3.15)--(-0.5,0.93)--(0,2.1)--cycle;
\draw (-2.6,2.3)--(-3.07,1.1)--(-3,3.3)--cycle;
\draw (-4,2.45)--(-4.7,3.55)--(-4.7,1.1)--cycle;
\draw (-5.5,2.65)--(-6.4,3.87)--(-6.5,1.07)--cycle;
\draw (-7.3,2.88)--(-8.4,4.27)--(-8.8,1.01)--cycle;
\draw (1.6,2.04)--(1,3.18)--(0.8,0.8)--cycle;
\draw (3.2,2)--(2.4,3.27)--(2.2,0.63)--cycle;
\draw (5.6,2)--(4.2,3.425)--(3.8,0.43)--cycle;
\draw (8.2,2)--(6.8,3.7)--(5.8,0.17)--cycle;
\foreach \Point in {(-1.6,3.2),(-0.3,3.15),(-3,3.3),(-4.7,3.55),(-1.4,2.2), (0,2.1),(-2.6,2.3),(-4,2.45),(-1.8,1),(-0.5,0.93),(-3.07,1.1),(-4.7,1.1),(-5.5,2.65),(-6.4,3.87),(-6.5,1.07),(-7.3,2.88),(-8.4,4.27),(-8.8,1.01),(1.6,2.04),(1,3.18),(0.8,0.8),(3.2,2),(2.4,3.27),(2.2,0.63),(5.6,2),(4.2,3.425),(3.8,0.43),(8.2,2),(6.8,3.7),(5.8,0.17),(6.3,2),(-0.4,2),(-0.2,1.6),(-0.13,2.55),
(-2.75,1.9),(-3.04,2.2),(-2.75,2.7),(-4.23,2),(-4.45,1.6),(-4.17,2.73),(-4.38,3.04),(-4.7,2.65),(-4.7,2),(-6.1,3.45),(-5.9,3.2),(-5.72,2.95),(-5.79,2.2),(-6,1.85),(-6.22,1.5),(-6.425,3),(-6.45,2.4),(-6.47,1.9),(-8.1,3.9),(-7.9,3.65),(-7.7,3.4),(-7.5,3.155),(-7.6,2.5),(-7.9,2.125),(-8.2,1.75),(-8.5,1.4),(-8.7,1.8),(-8.63,2.35),(-8.57,2.9),(-8.5,3.5),(0.93,2.4),(0.87,1.6),(1.32,1.6),(1.1,1.25),(1.4,2.4),(1.2,2.8),(2.3,1.95),(2.35,2.55),(2.25,1.35),(2.47,1),(2.9,1.6),(2.7,1.3),(3,2.3),(2.83,2.6),(2.64,2.9),(4.14,3),(4.06,2.4),(3.95,1.5),(3.88,1),(4.5,3.1),(4.75,2.85),(5,2.6),(5.25,2.35),(5.155,1.6),(4.8,1.3),(4.45,1),(4.1,0.7),(6.3,1.9),(6.15,1.4),(5.98,0.8),(6.455,2.5),(6.6,3),(7.1,3.35),(7.5,2.85),(7.3,3.1),(7.7,2.6),(7.95,2.3),(6.25,0.5),(7,1.085),(6.625,0.7925),(7.8,1.7),(7.4,1.3925),(-1.67,2.2),(-0.4,2.1),(-3.05,2.34),(-4.7,2.53),(-6.43,2.77),(0.9,2.05),(2.3,2.02),(4,2.025)
}{
\filldraw[white] \Point circle (2pt);
}
\filldraw[white] (-8.54,3.045) circle (2.5pt);
\filldraw[white] (-8.55,3.035) circle (2.5pt);
\draw[line width=0.09cm,red!70](-7.3,2.88)--(-5.5,2.65);
\node[rotate = -15] at (-10,4.7) {$\dots$};
\node[rotate = 5] at (-10,0.9) {$\dots$};
\node[rotate = -5] at (-10,3.2){$\dots$};
\node[rotate = 0] at (9.7,2) {$\dots$};
\node[rotate = -10] at (7.5,-0.1) {$\dots$};
\node[rotate = 15] at (10,4.1) {$\dots$};
\foreach \Point in { (-1.6,3.2),(-0.3,3.15),(-3,3.3),(-4.7,3.55),(-6.4,3.87),(-8.4,4.27),(1,3.18),(2.4,3.27),(4.2,3.425),(6.8,3.7), (-1.4,2.2), (0,2.1),(-2.6,2.3),(-4,2.45),(-5.5,2.65),(-7.3,2.88),(1.6,2.04),(3.2,2),(5.6,2),(8.2,2),(-1.8,1),(-0.5,0.93),(-3.07,1.1),(-4.7,1.1),(-6.5,1.07),(-8.8,1.01),(0.8,0.8),(2.2,0.63),(3.8,0.43),(5.8,0.17)
}{
\filldraw \Point circle (1.5pt);
}
\foreach \Point in {(-0.4,2),(-0.2,1.6),(-0.13,2.55),
(-2.75,1.9),(-3.04,2.2),(-2.75,2.7),(-4.23,2),(-4.45,1.6),(-4.17,2.73),(-4.38,3.04),(-4.7,2.65),(-4.7,2),(-6.1,3.45),(-5.9,3.2),(-5.72,2.95),(-5.79,2.2),(-6,1.85),(-6.22,1.5),(-6.425,3),(-6.45,2.4),(-6.47,1.9),(-8.1,3.9),(-7.9,3.65),(-7.7,3.4),(-7.5,3.155),(-7.6,2.5),(-7.9,2.125),(-8.2,1.75),(-8.5,1.4),(-8.7,1.8),(-8.63,2.35),(-8.57,2.9),(-8.5,3.5),(0.93,2.4),(0.87,1.6),(1.32,1.6),(1.1,1.25),(1.4,2.4),(1.2,2.8),(2.3,1.95),(2.35,2.55),(2.25,1.35),(2.47,1),(2.9,1.6),(2.7,1.3),(3,2.3),(2.83,2.6),(2.64,2.9),(4.14,3),(4.06,2.4),(3.95,1.5),(3.88,1),(4.5,3.1),(4.75,2.85),(5,2.6),(5.25,2.35),(5.155,1.6),(4.8,1.3),(4.45,1),(4.1,0.7),(6.3,1.9),(6.15,1.4),(5.98,0.8),(6.455,2.5),(6.6,3),(7.1,3.35),(7.5,2.85),(7.3,3.1),(7.7,2.6),(7.95,2.3),(6.25,0.5),(7,1.085),(6.625,0.7925),(7.8,1.7),(7.4,1.3925)
}{
\filldraw \Point circle (1.5pt);
}
\draw (-9.5,1) .. controls (-2,1.2) .. (7,0);
\draw (-9.5,3.2) .. controls (-1,2) .. (9.2,2);
\draw (-9.5,4.5) .. controls (-1.2,2.8) .. (9.5,4);
		\end{tikzpicture}
	\caption{$\Theta_3$-extension of the infinite path, and a subgraph isomorphic to $\theta_{3,4}$.}
	\label{fig:Theta3Z}
\end{figure}
\begin{proposition}\label{lemTHn}
For any connected graph $\Gamma$ and any integer $n > 0$,
\[ \wcop(\Theta_n(\Gamma)) \leq n\cdot\wcop(\Gamma),\] and
\[\scop(\Theta_n(\Gamma))\leq n\cdot\scop(\Gamma).\]
\end{proposition}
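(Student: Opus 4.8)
The plan is to prove both inequalities at once by exhibiting, for every $\psi$ and $R$, a winning strategy for $nk$ cops on $\Theta_n(\Gamma)$ protecting the $R$-ball around the single vertex $v_0:=\eta_1(u_0)$; by the reduction lemma of Subsection~\ref{subsec:copwin-notation} this suffices. Here $k=\wcop(\Gamma)$ in the weak case and $k=\scop(\Gamma)$ in the strong case, and I would fix a pair $(\sigma,\rho)$ witnessing that $\Gamma$ is $k$-weak (resp., after seeing $\psi$, $k$-strong) cop win, together with a winning strategy $W$ for $k$ cops of speed $\sigma$, reach $\rho$, protecting $B_R(u_0)$ in $\Gamma$ against a robber of speed $\psi$. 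Set $\sigma_n:=\sigma$ and $\rho_n:=\rho+\sigma+1$. I will use throughout that the collapsing map $g\colon\Theta_n(\Gamma)\to\Gamma$ sending a bridge over $x$ to $x$ is $(1,0)$-Lipschitz, and that each $\eta_i\colon\Gamma\to\Gamma_i\subseteq\Theta_n(\Gamma)$ is an isometric embedding (this follows since $g\circ\eta_i=\mathrm{id}$ and both maps are $(1,0)$-Lipschitz).

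The $nk$ cops will be organised into $k$ \emph{platoons}: platoon $\ell$ consists of $n$ cops, one in each copy, all lying over a common vertex $c_\ell\in V(\Gamma)$, so that a platoon ``moves like a single $\Gamma$-cop'' at cost $\le\sigma$ inside $\Theta_n(\Gamma)$. The platoons run $W$ against the \emph{virtual robber}, which is declared to be the shadow $g(r_t)\in V(\Gamma)$ of the real robber. The core of the argument is a lemma: \emph{every move of the virtual robber is legal for $W$}. To prove it I would split the real robber's one-stage walk into maximal pieces lying inside a single copy $\Gamma_i$ and maximal pieces traversing a single bridge. Along an inside-$\Gamma_i$ piece the shadow moves along the image of that walk under $\eta_i^{-1}$, and since the real walk avoids the $\rho_n$-balls of the platoon cops sitting in $\Gamma_i$ (at $\eta_i(c_\ell)$) and $\Gamma_i$ is isometrically embedded, the shadow avoids the $\rho$-balls of all platoons (as $\rho_n\ge\rho$). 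Along a bridge-traversal over $x$ the shadow is frozen at $x$; here one uses that such a traversal must pass through a neighbour of an endpoint $\eta_a(x)$, so for each $\ell$ the real walk yields $\dist_\Gamma(c_\ell,x)+1\ge\dist_{\Theta_n(\Gamma)}(\eta_a(c_\ell),\,\text{that neighbour})>\rho_n$, whence $\dist_\Gamma(c_\ell,x)>\rho$. (If instead the robber stays in the interior of one bridge for the whole stage, the shadow does not move at all.) Thus the virtual game is a legitimate play of $W$, so the cops win it: either the shadow eventually stays outside $B_R(u_0)$ forever — whence $\dist_{\Theta_n(\Gamma)}(r_t,v_0)\ge\dist_\Gamma(g(r_t),u_0)>R$ eventually and the $R$-ball is protected — or the shadow gets captured at some stage by some platoon $\ell$.

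It remains to handle the capture case. Let $x$ be the shadow at that stage. If the real robber is then at $\eta_i(x)$ for some $i$, it is genuinely captured, since platoon $\ell$'s cop in $\Gamma_i$ is within $\rho\le\rho_n$ of it. Otherwise the real robber sits at an interior vertex of some bridge $\{a,b\}$ over $x$, at depth $t'$ with $\rho_n-\rho<t'<\dist_\Gamma(u_0,x)+1-(\rho_n-\rho)$ — the strict inequalities hold because the robber was not within $\rho_n$ of platoon $\ell$'s cops at $\eta_a(c_\ell),\eta_b(c_\ell)$, which lie within $\rho$ of $\eta_a(x),\eta_b(x)$. The cops now abandon $W$ and switch to a \emph{pincer}: platoon $\ell$'s $\Gamma_a$-cop and $\Gamma_b$-cop march to $\eta_a(x)$ and $\eta_b(x)$ and then advance along the bridge towards each other, while every other cop stands still. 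I would check that during the pincer the robber can never leave this bridge — to pass an advancing cop it would have to move through a vertex within $\rho_n$ of that cop (using $\rho_n\ge\sigma+1$ to see that the cop's reach always sweeps a segment of the bridge up to and including its own position) — so the shadow stays frozen at $x$ and the two cops keep closing in; since the set of bridge vertices still out of reach shrinks by at least $2\sigma$ per stage, the robber is captured after at most $\lceil(\dist_\Gamma(u_0,x)+1)/(2\sigma)\rceil$ further stages. In either case $\Theta_n(\Gamma)$ is $\mathsf{CopWin}(nk,\sigma_n,\rho_n,\psi,v_0,R)$, and since $\rho_n$ was chosen independently of $R$ (depending on $\psi$ only in the strong case) this yields $\wcop(\Theta_n(\Gamma))\le n\wcop(\Gamma)$ and $\scop(\Theta_n(\Gamma))\le n\scop(\Gamma)$.

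The step I expect to be the main obstacle is the legality lemma for the virtual robber: carefully formalising the decomposition of a single stage into inside-copy and bridge-traversal pieces, and checking that ``the shadow is frozen over a bridge'' meshes exactly with $W$'s rules even when, within one stage, the real robber enters and leaves several bridges and visits several copies. A secondary fiddly point is the bookkeeping in the pincer confirming that the robber is genuinely trapped on the bridge and that its safe interval strictly shrinks each stage.
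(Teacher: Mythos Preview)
Your proposal is correct and follows essentially the same strategy as the paper: organise the $nk$ cops into $k$ platoons of $n$ ``parallel'' cops lying over a common $\Gamma$-vertex, run a winning $\Gamma$-strategy against the shadow $g(r_t)$, and when the shadow is captured trap the real robber on its bridge and squeeze. The paper proves the sharper implication that $\Gamma$ being $\mathsf{CopWin}(m,\sigma,\rho,\psi,v,R)$ forces $\Theta_n(\Gamma)$ to be $\mathsf{CopWin}(mn,\sigma,\rho,\psi,\eta_1(v),R)$ with the \emph{same} reach $\rho$, whereas you inflate to $\rho_n=\rho+\sigma+1$; this costs nothing for the cop-number inequalities. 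Your legality lemma (decomposing the robber's walk into in-copy segments and bridge segments, and noting that the shadow is frozen on bridges) and your pincer endgame are spelled out in considerably more detail than the paper's one-line justification and its informal ``zugzwang'' remark; in particular you make explicit the point that if the robber spends an entire stage in the interior of a bridge then the shadow does not move, which is exactly the case where the paper's sentence ``if the shadow of $p$ has a vertex within $\rho$ of a cop then $p$ does too'' is not literally true and one must instead pass to the capture/pincer branch.
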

The reader is encouraged to review Section~\ref{subsec:copwin-notation} for notation that is used in the following argument.
\begin{proof}
To prove these inequalities it is enough to show that
if $\Gamma$ is $\mathsf{CopWin}(m,\sigma,\rho,\psi, v, R)$ then $\Theta_n(\Gamma)$ is $\mathsf{CopWin}(mn,\sigma ,\rho ,\psi , \eta_0(v), R )$. Suppose that $\Gamma$ is $\mathsf{CopWin}(m,\sigma,\rho,\psi, v, R)$.
We will describe a winning strategy for the cops $\Theta_n(\Gamma)$ for a game with parameters $(mn,\sigma ,\rho ,\psi , \eta_0(v), R )$ by playing a parallel game on $\Gamma$. The $m\cdot n$ cops on the game on $\Theta_n(\Gamma)$ are indexed by pairs $i,k$ where $0\leq i<m$ and $0\leq k<n$, and their positions after the end of the $j$-stage are denoted by $c_{i,j}^k$. While playing on $\Theta_n(\Gamma)$, we play a parallel game on $\Gamma$ with parameters
$(m,\sigma,\rho,\psi, v, R)$ using a winning strategy for the cops.
The positions of the $m$ cops in $\Gamma$ after the end of the $j$-stage are denoted by $c_{i,j}$. The movements of the cops in $\Gamma$ will determine the moves of the cops in $\Theta_n(\Gamma)$ according to the rule
\[ \eta_k (c_{i,j}) = c^k_{i,j} .\]
The moves of the robber in $\Theta_n(\Gamma)$ will determine the moves of the robber in the game on $\Gamma$ by considering its shadow on $\Gamma$.
It is an observation that a movement of the robber in $\Theta_n(\Gamma)$ determines a valid move of the robber in $\Gamma$. Indeed if $p$ is a path in $\Theta_n(\Gamma)$ such that its shadow in $\Gamma$ has a vertex at a distance less than $\rho$ from a cop in $\Gamma$, then $p$ has a vertex at a distance less than $\rho$ from a cop in $\Theta_n(\Gamma)$.
Note that if the robber on $\Theta_n(\Gamma)$ lies on a bridge that connects $\eta_i(x)$ and $\eta_j(x)$ and there are cops on those two vertices, the robber is trapped between those vertices and the cops can now capture the robber after a finite number of stages. In this situation we will say that the robber is in ``\textit{zugzwang}''.
Observe that if the robber in $\Gamma$ is captured by the end of the $j$-stage, then the robber in $\Theta_n(\Gamma)$ is in zugzwang by the end of the $j$-stage. Hence after a finite number of stages the robber in $\Theta_n(\Gamma) $ is captured.
On the other hand, if at any stage the robber in $\Gamma$ is at a distance larger than $R$ from $v$, then the robber in $\Theta_n(\Gamma)$ is at distance larger than $R$ from $\eta_1(v)$.
The statements of the last two paragraphs show that following the strategy on $\Gamma$ yields a  strategy on $\Theta_n(\Gamma)$ such that if the robber is captured in $\Gamma$ then it is also captured in $\Theta_n(\Gamma)$; and if the robber in $\Gamma$ is pushed away from the $R$-ball centered at $v$, then the robber in $\Theta_n(\Gamma)$ is pushed away from the $R$-ball centered at $\eta_1(v)$.
\end{proof}

\section{Realization of cop numbers}
\begin{theorem}\label{CoroTcop}
Let $\Gamma$ be a connected unbounded graph and $n$ a positive integer.
If $\wcop(\Gamma)=1$ then $\wcop(\Theta_n(\Gamma))=n$.
Analogously, if $\scop(\Gamma)=1$ then $\scop(\Theta_n(\Gamma))=n$.
\end{theorem}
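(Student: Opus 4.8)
The upper bounds require no new work: by Theorem~\ref{TcopN}, $\wcop(\Theta_n(\Gamma))\le n\cdot\wcop(\Gamma)=n$, and likewise $\scop(\Theta_n(\Gamma))\le n\cdot\scop(\Gamma)=n$. So the whole content is the matching lower bounds $\wcop(\Theta_n(\Gamma))\ge n$ and $\scop(\Theta_n(\Gamma))\ge n$, i.e.\ that $n-1$ cops cannot win on $\Theta_n(\Gamma)$. The plan is to produce an explicit robber strategy defeating $n-1$ cops, in the spirit of the robber strategy in the proof of Theorem~\ref{thm:SquareGrid}. In fact this lower bound uses only that $\Gamma$ is connected and unbounded, not that $\wcop(\Gamma)=1$ (respectively $\scop(\Gamma)=1$).

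Fix $n-1$ cops with speed $\sigma$ and reach $\rho$. Using that $\Gamma$ is unbounded, pick a geodesic ray $u_0=x_0,x_1,\dots,x_\ell$ in $\Gamma$ with $\ell$ very large compared with $\sigma,\rho,n$, and let the robber play inside the finite subgraph $\Lambda\subseteq\Theta_n(\Gamma)$ spanned by the vertices $\eta_1(x_j),\dots,\eta_n(x_j)$ ($0\le j\le\ell$) together with all bridges joining $\eta_i(x_j)$ to $\eta_k(x_j)$. The crucial feature of $\Lambda$ is that the bridges incident to the ``far'' hubs $\eta_k(x_j)$ with $j$ near $\ell$ have length $j+1\gg\sigma+\rho$, so these hubs are pairwise at distance $\approx\ell\gg\rho+\sigma$. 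The robber keeps its position among the $n$ far hubs $\eta_1(x_\ell),\dots,\eta_n(x_\ell)$; since there are only $n-1$ cops, after every cop move at least one of these hubs is \emph{safe}, meaning no cop is within reach of it nor could reach within $\rho$ of it on its next move. The robber stays put while its hub is safe and relocates to a safe hub the instant its own hub stops being safe. Choosing $v=\eta_1(u_0)$ as the centre and $R$ a suitable multiple of $\ell$, so that all of $\Lambda$ — including the interiors of the bridges the robber traverses — lies in $B_R(v)$, the robber is never forced out of $B_R(v)$; hence if it is never captured it wins.

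The heart of the matter is the \emph{robber's-move lemma}: from a hub $h$ that has just become unsafe, the robber can always reach some safe hub $h'$ by a single admissible move, i.e.\ a path of length at most $\psi$ whose vertices all lie at distance $>\rho$ from every cop. For the weak cop number one is free to fix $\rho$ first and then take $\psi$ (and $\ell$, $R$) as large as necessary, and the lemma becomes a connectivity count inside $\Lambda$: because the bridges are long, a cop that makes $h$ unsafe is at distance $>\rho$ from $h$ and therefore obstructs none of the other bridges leaving $h$, while each remaining cop obstructs essentially a single bridge; since the complete graph $K_n$ on the hubs is $(n-1)$-edge-connected and the copies $\Gamma_k$ provide extra detours, $n-1$ cops cannot separate $h$ from all safe hubs, and the resulting detour has length bounded in terms of $\ell$ and $n$, which is absorbed into $\psi$. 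For the strong cop number the robber must commit to $\psi$ knowing only $\sigma$; here one sets $\psi\ge\sigma$ and, after $\rho$ is revealed, takes $\ell$ (hence $R$) enormous. Now a relocation occupies many stages, as the robber advances $\psi\ge\sigma$ steps per stage along a bridge of length $\approx\ell$; but because $\psi\ge\sigma$ the robber never loses ground to a cop pursuing it down a bridge, and a pigeonhole over the $n$ copies against the $n-1$ cops — now keeping track of how far each cop can move during a crossing, namely at most $\sigma$ per stage, no faster than the robber — shows the robber can always aim for a hub that is still safe when it arrives.

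I expect the strong-game form of the robber's-move lemma to be the genuine obstacle. One must define ``safe hub'' with enough slack — distance exceeding $\rho$ by an appropriate multiple of $\sigma$ — and then show, through a pigeonhole on copies versus cops that correctly accounts for the many stages a bridge crossing takes, both that a safe destination hub always exists and that it stays safe while the robber is in transit; the subtle configuration to rule out is the one in which the $n-1$ cops spread out so that a cop can be waiting near each of the $n$ hubs by the time the robber could reach it. The weak-game version is comparatively routine once the detour lengths inside $\Lambda$ have been bounded.
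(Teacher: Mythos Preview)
Your plan is on the right track and your self-diagnosis is accurate: the strong-game robber's-move lemma is where the content is, and you have not proved it. There is moreover a concrete miscalibration. Taking $\psi\ge\sigma$ only prevents a \emph{pursuing} cop from gaining on the robber along a bridge; it does nothing to rule out the ``spread-out'' configuration you worry about. If the robber commits to a target hub at distance $\approx \ell/2$ from every cop and then spends $\approx \ell/\psi$ stages crossing, the cops can move $\approx \sigma\ell/\psi$ in that time; with $\psi=\sigma$ this is $\approx\ell$, so a cop can indeed be waiting at any hub when the robber arrives. What you need is $\sigma/\psi<\tfrac12$, i.e.\ $\psi>2\sigma$, so that the cops' total displacement during a crossing is strictly less than half a bridge length and hence less than the initial safety margin.

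The paper exploits exactly this and thereby avoids splitting into weak and strong cases. One takes $\psi=2\sigma+3$, depending only on $\sigma$, so the same robber strategy works for both games. Rather than your large subgraph $\Lambda$, the robber plays on a small gadget $\theta_{n,m}$: choose adjacent $x,y\in\Gamma$ with $\dist_\Gamma(u_0,x)=m$ and $\dist_\Gamma(u_0,y)=m+1$, and use the $2n$ corners $\eta_i(x),\eta_i(y)$ together with all bridges between them. The two-level structure (sibling pairs $\{\eta_i(x),\eta_i(y)\}$) is what makes the pigeonhole clean: from the robber's current pair $S$ there are $2(n-1)$ bridges, and a short counting argument (Lemma~\ref{lemma:theta}) shows that either $S$ is already at distance $\ge m/2$ from all cops, or some bridge out of $S$ is cop-free and lands at a pair $T$ with $\dist(T,C)\ge m/2$. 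The robber commits to $T$ and crosses in $\lceil(m+2)/\psi\rceil$ stages; the hypothesis $\rho<\tfrac{m}{2}-\sigma\lceil(m+2)/\psi\rceil$, satisfiable for large $m$ precisely because $\sigma/\psi<\tfrac12$, guarantees $T$ is still out of reach on arrival, while $\psi>\sigma+2$ handles the pursuing cop along the bridge.

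A smaller point: in your weak-game sketch the sentence ``a cop that makes $h$ unsafe is at distance $>\rho$ from $h$ and therefore obstructs none of the other bridges leaving $h$'' is not correct as written---a cop at distance $\rho+1$ from $h$ sitting on a bridge out of $h$ certainly obstructs that bridge. The underlying count can be repaired, but once you strengthen $\psi\ge\sigma$ to $\psi>2\sigma$ and carry out the multi-stage crossing analysis, your outline essentially converges to the paper's uniform argument and the separate weak-game treatment becomes unnecessary.
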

The proof is divided into three parts. First we introduce a finite graph $\theta_{n,m}$ that depends of the integer parameters $m$ and $n$, and we show that on these graphs a robber is never captured if some particular relation between $m,n$ and the parameters of the game holds, see Proposition~\ref{lilTheta}. In the second part, we show that in general the $\Theta_n$-extension $\Theta_n(\Gamma)$ of a graph $\Gamma$ contains induced subgraphs isomorphic to $\theta_{n,m}$; then by almost repeating the argument of the first part, we show that $\Theta_n(\Gamma)$ is robber win for $n-1$ cops subject to a particular relation between $n$ and the parameters of the game, see Proposition~\ref{lilTheta2}. The last part of the section concludes with the proof of the theorem.
\subsection{The finite graph $\theta_{n,m}$.}
Let us consider integers $n\geq1$ and $m\geq 1$. Let
$\theta_{n,m}$ denote the graph defined as follows:
\begin{itemize}
\item Consider the complete graphs $K_n$ and $K_2$, and take the Cartesian product of graphs $\sq{K_n}{K_2}$.
\item Inside the graph $\sq{K_n}{K_2}$ there are two copies of $K_n$, we are going to regard them as two ``\textit{levels}'' on the graph.
\item For every edge between vertices of the first level, subdivide the edge into $m$ edges.
\item For every edge between vertices of the second level, subdivide the edge $m$ into $m+1$ edges.
\end{itemize}
\begin{figure}[ht]
	\centering
	
\begin{tikzpicture}[scale=1]
\foreach \Point in {(-4,1),(4,1.5),(-0.5,2.5),(0.5,0)
}{
\filldraw[blue!80] \Point circle (3pt);
\filldraw \Point circle (2pt);
}
\draw (-4,1)--(-0.5,2.5)--(4,1.5)--(0.5,0)--cycle;
\draw (-0.5,2.5)--(0.5,0);
\filldraw[white] (0,1.25) circle (2.3pt);
\draw (-4,1)--(4,1.5);
\foreach \Point in {(-2.83,1.5),(-1.67,2),(-2.5,0.67),(-1,0.33),(-1.33,1.17),(1.33,1.33),(1.67,0.5),(2.83,1),(0.6,2.25),(2.5,1.83),(-0.17,1.67),(0.17,0.83)
}{
\filldraw \Point circle (2pt);
}
\foreach \Point in {(-0.38,2.2),(-0.92,2.32),(0.38,2.3),(0.935,2.18)
}{
\filldraw[white] \Point circle (1.3pt);
}
\foreach \Point in {(-4,3),(4,3.5),(-0.5,4.5),(0.5,2)
}{
\filldraw[red!80] \Point circle (3pt);
\filldraw \Point circle (2.3pt);
}
\draw (-4,3)--(-0.5,4.5)--(4,3.5)--(0.5,2)--cycle;
\draw (-0.5,4.5)--(0.5,2);
\filldraw[white] (0,3.25) circle (2pt);
\draw (-4,3)--(4,3.5);
\draw (-4,3)--(-4,1);
\draw[dashed] (-0.5,4.5)--(-0.5,2.5);
\draw (4,1.5)--(4,3.5);
\draw (0.5,2) -- (0.5,0);
\foreach \Point in {(0.15,2.9),(0.3,3.27),(-2.25,3.75),(-1.75,2.5),(1.75,4),(2.25,2.75)
}{
\filldraw \Point circle (2pt);
}
		\end{tikzpicture}
	\caption{An illustration of the graph $\theta_{4,2}$}
	\label{fig:t42}
\end{figure}
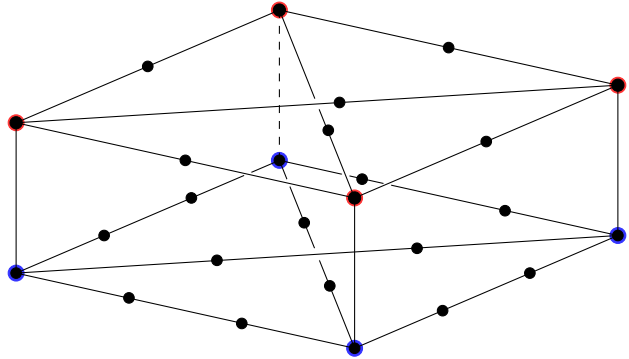
The vertices of $\theta_{n,m}$ corresponding to the vertices of $\sq{K_n}{K_2}$ (before subdivisions) are called \emph{corners}. Observe that the distance between any two corners of $\theta_{n,m}$ at the same level is at least $m$. Two corners of $\theta_{n,m}$ which are adjacent and in different levels are called \emph{sibling corners}. A \emph{bridge} in $\theta_{n,m}$ is any path of length $m$ between corners in the first level, and any path of length $m+1$ between corners in the second level. Hence if $r$ is a corner of $\theta_{n,m}$, then there are $n-1$ bridges with initial vertex $r$.
For example, the red subgraph on Figure~\ref{fig:Theta3Z} is a graph isomorphic to $\theta_{3,4}$, where the first level corresponds to the right triangle, and the second level to the left triangle. Figure~\ref{fig:t42} shows $\theta_{4,2}$, the red and blue vertices are the corners, the red vertices are in the first level, and the blue ones in the second level.

A finite graph $\Gamma$ is said to be
$\mathsf{RobberWin}(n,\sigma,\rho,\psi,\infty)$ if it is not
$\mathsf{CopWin}(n,\sigma,\rho,\psi, R)$ for $R$ equal to  the diameter of $\Gamma$. Roughly speaking,   $\Gamma$ is
$\mathsf{RobberWin}(n,\sigma,\rho,\psi,\infty)$ if a robber with speed $\psi$ has a strategy such that it is never captured by $n$ cops with speed $\sigma$ and reach $\rho$.
\begin{proposition}\label{lilTheta}
Let $n\geq 1$, $m\geq 1$, $\sigma\geq 1$, $\rho>0$, $\psi\geq 1$ be integers. If $\psi> \sigma +2$, $m>2(\sigma+\rho)$ and
\[\rho < \frac m2 - \sigma\ceil*{\frac{m+2}{\psi}} ,\]
then $\theta_{n,m}$ is \emph{$\mathsf{RobberWin}(n-1,\sigma,\rho,\psi, \infty)$}.
\end{proposition}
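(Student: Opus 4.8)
The plan is to give the robber an explicit evasion strategy, in the spirit of the ``safe points'' argument in the proof of Theorem~\ref{1eN-A}: the $n$ first-level corners of $\theta_{n,m}$ serve as safe points, and since there are $n$ of them but only $n-1$ cops, and any two distinct corners lie at distance at least $m>2(\sigma+\rho)$ apart, a pigeonhole argument guarantees that at every stage of the game some corner is at distance at least $\lceil m/2\rceil$ from every cop (no cop can be within $\lceil m/2\rceil-1$ of two distinct corners). The robber keeps to such a corner and, whenever a cop approaches, relocates to another one.

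First I would record the metric facts about $\theta_{n,m}$ needed for the estimates: any two corners are joined by a path of length at most $m+2$, consisting of one bridge, or of a sibling edge followed by one bridge. Hence, with $\tau:=\ceil*{\frac{m+2}{\psi}}$, a robber of speed $\psi$ can move between any two corners in at most $\tau$ stages, during which each cop moves a total distance at most $\sigma\tau$; and the hypothesis $\rho<\frac m2-\sigma\ceil*{\frac{m+2}{\psi}}$ says precisely that a cop at distance at least $m/2$ from a vertex $v$ when a relocation begins is still at distance greater than $\rho$ from $v$ when it ends.

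The strategy proceeds in phases. During a phase the robber rests at a corner $c$ that is at distance at least $\lceil m/2\rceil$ from every cop; by $m>2(\sigma+\rho)$ this keeps it out of immediate danger. The phase ends, and the robber begins to relocate, at the first stage at which some cop comes within a suitably chosen threshold (a little larger than $\rho+\sigma$, but well below $\lceil m/2\rceil$) of $c$; as the phase had not yet ended, all cops were farther than that threshold from $c$ at the previous stage, which is the room the robber needs to escape. The relocation is where the three hypotheses are used. At least one cop is responsible for ending the phase and so is ``committed'' near $c$; of the other $n-2$ cops, each can be near the interior of at most one of the $n-1$ bridges emanating from $c$ (here $m>2(\sigma+\rho)$ keeps distinct bridges far apart except near their common endpoint $c$), so some bridge $B$ from $c$ to a corner $c'$ has no cop near its interior or near $c'$. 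The robber walks along $B$ (preceded by a sibling edge if $c'$ lies on the other level) to $c'$. Over the far portion of this route, and at $c'$ itself, every cop stays at distance greater than $\rho$ from the robber throughout the at-most-$\tau$ stages, by the reformulated third hypothesis; over the near portion, the cops committed near $c$ fall behind, since $\psi>\sigma+2$ gives the robber a net gain of at least $3$ per stage along $B$, while $m>2(\sigma+\rho)$ makes $B$ long enough for the robber to open a gap larger than $\rho$ on each of them before reaching the far portion. One then verifies that $c'$ is again at distance at least $\lceil m/2\rceil$ from every cop, so that a new phase can begin — or, if not, that the robber can chain directly into another relocation without danger. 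By induction over phases, the robber is never captured.

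Finally, $\theta_{n,m}$ being a finite graph, $\mathsf{RobberWin}(n-1,\sigma,\rho,\psi,\infty)$ means exactly that it is not $\mathsf{CopWin}(n-1,\sigma,\rho,\psi,R)$ for $R$ its diameter; since the robber in the strategy above is never pushed out of any ball, the strategy proves the proposition. \emph{The crux of the argument} is the relocation analysis: selecting a bridge $B$ that is clear of cops except possibly near its initial corner, and then checking, stage by stage over the at most $\tau$ stages of the move, that the robber remains at distance greater than $\rho$ from every cop until it reaches the safe far part of $B$. Carrying out this bookkeeping — in particular calibrating the threshold that triggers a relocation against the others — is exactly where all three hypotheses $\psi>\sigma+2$, $m>2(\sigma+\rho)$ and $\rho<\frac m2-\sigma\ceil*{\frac{m+2}{\psi}}$ come into play.
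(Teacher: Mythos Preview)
Your plan has the right shape—safe corners, rest-then-relocate, pigeonhole to find a far target—and is close to the paper's argument. But the bridge-selection step contains a genuine gap. You argue that the cop who triggers the relocation is ``committed'' near $c$ and will fall behind because $\psi>\sigma+2$, while the remaining $n-2$ cops block at most $n-2$ of the $n-1$ bridges emanating from $c$, leaving one bridge $B$ for the robber. The problem is that the committed cop need not be at $c$: it is typically sitting on the interior of one of those $n-1$ bridges, say $B_j$, at distance roughly your threshold from $c$. On $B_j$ that cop is not behind the robber but \emph{in front of} him, and any path from $c$ along $B_j$ passes through (hence within $\rho$ of) the cop's vertex; no speed advantage helps when the obstacle is ahead. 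So the committed cop blocks one bridge outright, and with the other $n-2$ cops each parked on one of the remaining $n-2$ bridges, all $n-1$ first-level bridges from $c$ can be simultaneously obstructed. Concretely, for $n=3$: put cop~$A$ on the bridge $c\to c_1$ just inside your threshold and cop~$B$ at the midpoint of the bridge $c\to c_2$; both first-level bridges from $c$ now carry a cop between the robber and the far corner, and your argument gives the robber nowhere to go.

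What the paper does to repair this is precisely to exploit the second level. Rather than single corners it works with \emph{sibling pairs} $S$, from which there are $2(n-1)$ bridges (one per level to each of the other $n-1$ pairs). Lemma~\ref{lemma:theta} assigns to each cop $v$ a set $f(v)\subseteq\mathcal B(S)$ of size at most two—the two bridges to the sibling pair $E(v)$ nearest $v$, or the single bridge containing $v$ when $E(v)=S$—and shows that either $\bigcup_{v\in C} f(v)=\mathcal B(S)$, which forces every cop to be at distance $\geq m/2$ from $S$ so the robber simply stays, or some bridge $p\in\mathcal B(S)$ is entirely cop-free and its far pair $T$ satisfies $\dist(T,C)\geq m/2$. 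With a cop-free bridge in hand, your ``near portion / far portion'' dichotomy becomes exact: since no cop lies on $p$, any geodesic from a cop to the robber (now on $p$) must enter $p$ through one of its endpoints, and the two cases—through the far endpoint, handled by $\rho<\tfrac m2-\sigma\ceil*{\tfrac{m+2}{\psi}}$; through the near endpoint, handled by $\psi>\sigma+2$—are exactly the computation you describe. Your sketch would go through once you replace ``first-level corners and the $n-1$ bridges from $c$'' by ``sibling pairs and the $2(n-1)$ bridges from the pair'', but that replacement is the substance of Lemma~\ref{lemma:theta} and is not a bookkeeping detail.
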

We introduce some notation and prove a lemma before the proof of the proposition. For each vertex $v$ of $\theta_{n,m}$, choose a pair of sibling corners $E(v)$ such that $ \dist(v, E(v))$ is minimal. Note that $\dist(v, E(v))\leq \frac{m+1}{2}$ for any vertex $v$. Observe that if $S$ is a pair of siblings such that $S\neq E(v)$, then $\dist(v, S)\geq \frac m2$.
Let $S$ and $T$ be pairs of sibling corners and let $v$ be a vertex of $\theta_{n,m}$. Let $\mathcal{B}(S,v)$ denote the set of bridges from a corner in $S$ that contain the vertex $v$; hence $\mathcal{B}(S,v)$ has cardinality $n-1$ if $v\in S$, and cardinality at most one otherwise. Let $\mathcal{B}(S,T)$ denote the set of bridges from a corner in $S$ to a corner of $T$, and let $\mathcal{B}(S)$ the set of all bridges from a corner in $S$. Observe that $\mathcal{B}(S)$ has cardinality $2n-2$.
\begin{lemma}\label{lemma:theta}
Consider the graph $\theta_{n,m}$ and suppose that $m>1$. Let $S$ be a pair of sibling corners, and $C$ a collection of $n-1$ vertices such that $S\cap C = \emptyset$.
Then there is a pair of sibling corners $T$ such that $\dist (T,C)\geq \frac m2$ and either $S =T$ or there is a bridge from a corner in $S$ to a corner in $T$ that has no vertices in $C$.
\end{lemma}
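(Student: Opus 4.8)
The plan is to argue by contradiction: if no pair $T$ with the stated properties exists, then $C$ must contain at least $n$ vertices, contradicting $|C|=n-1$.

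First some setup and a preliminary observation. Enumerate the sibling pairs of $\theta_{n,m}$ as $S, T_2,\ldots, T_n$ (there are $n$ of them). For $2\le j\le n$, the bridges from a corner of $S$ to a corner of $T_j$ are exactly two: the first-level bridge $\alpha_j$ of length $m$ and the second-level bridge $\beta_j$ of length $m+1$; these $2n-2$ bridges make up $\mathcal B(S)$. Recall from the remark preceding the lemma that $\dist(v,S')<\frac m2$ forces $S'=E(v)$. Call a sibling pair $S'$ \emph{bad} if $\dist(S',C)<\frac m2$. Then for each bad $S'$ there is a witness $c\in C$ with $\dist(c,S')<\frac m2$, hence $E(c)=S'$; since $E$ is a function, fixing one witness per bad pair yields a set $W\subseteq C$ of \emph{distinct} vertices, one for each bad pair. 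Also recall $|\mathcal B(S,c)|\le 1$ whenever $c\notin S$ (stated before the lemma), so each vertex of $C$ meets at most one bridge from $S$.

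Now assume for contradiction that no valid $T$ exists. Taking $T=S$ in the conclusion shows $S$ is bad. For each $j$, taking $T=T_j$ shows that either $T_j$ is bad or both $\alpha_j$ and $\beta_j$ meet $C$. Let $B=\{j:\,T_j\text{ is bad}\}$ and $N=\{2,\ldots,n\}\setminus B$, so $|B|+|N|=n-1$. For each $j\in N$ pick $x_j\in C\cap\alpha_j$ and $y_j\in C\cap\beta_j$. These $2|N|$ vertices are pairwise distinct: two distinct first-level bridges from $S$ share only the first-level corner of $S$ (which is not in $C$), two distinct second-level bridges share only the second-level corner of $S$, and a first-level bridge is vertex-disjoint from a second-level one.

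The crux is to compare the blocking vertices $x_j,y_j$ with the witnesses $W$. A direct distance computation on $\theta_{n,m}$ shows that any vertex $v$ lying on $\alpha_j$ or on $\beta_j$ is within distance $\le\frac m2$ of $S$ or of $T_j$ and is at distance $\ge m$ from every other sibling pair; hence $E(v)\in\{S,T_j\}$. Consequently, if some $x_j$ or $y_j$ equals a chosen witness $c_{S'}\in W$ of a bad pair $S'$, then $S'=E(x_j)\in\{S,T_j\}$, and $S'\ne T_j$ because $j\in N$ means $T_j$ is not bad; so $S'=S$, i.e. this vertex is the (unique) chosen witness of $S$. As the $x_j,y_j$ are pairwise distinct, at most one of them lies in $W$. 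Since $S$ is bad, $|W|=1+|B|$, and therefore
\[ |C|\ \ge\ |W\cup\{x_j,y_j:\,j\in N\}|\ \ge\ (1+|B|)+2|N|-1\ =\ |B|+2|N| . \]
If $N\ne\emptyset$ this gives $|C|\ge(n-1)+|N|\ge n$; if $N=\emptyset$ the overlap term is unnecessary and $|C|\ge|W|=1+|B|=n$. Either way $|C|\ge n$, contradicting $|C|=n-1$. Hence a valid $T$ exists.

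The main obstacle is precisely this last comparison. A naive count ($S$ leaves at least $n-1$ clean bridges, while there are at most $n-1$ bad pairs) does not close, because one bad pair could a priori absorb both clean bridges running to it. What makes it work is the geometric fact that the vertices of $C$ witnessing a pair $T_j$ ($j\ge 2$) being bad are disjoint from the vertices of $C$ that sit on bridges from $S$ to the \emph{other} pairs, so the two pools of "used up" vertices of $C$ can overlap in at most one vertex.
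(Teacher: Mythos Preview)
Your argument is correct, though it takes a different route from the paper. One small slip: the claim ``within distance $\le \frac m2$ of $S$ or of $T_j$'' is off by one on the second-level bridge $\beta_j$, where a midpoint vertex can be at distance $\frac{m+1}{2}$ from both. This does not affect the conclusion $E(v)\in\{S,T_j\}$, since you still have $\dist(v,T_k)\ge m>\frac{m+1}{2}$ for $k\neq j$ once $m>1$.

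The paper's proof is direct rather than by contradiction. It assigns to each $v\in C$ a set $f(v)\subseteq \mathcal B(S)$ of cardinality at most two: namely $f(v)=\mathcal B(S,E(v))$ if $E(v)\neq S$, and $f(v)=\mathcal B(S,v)$ otherwise. If $\bigcup_{v\in C}f(v)=\mathcal B(S)$, then since $|\mathcal B(S)|=2(n-1)$ and $|C|=n-1$, every $f(v)$ has size two, forcing $E(v)\neq S$ and hence $\dist(v,S)\ge \frac m2$; so $T=S$ works. Otherwise some bridge $p\in\mathcal B(S)$ is missed by all $f(v)$, and its terminal sibling pair $T$ satisfies both requirements. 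Your contradiction--counting argument reaches the same endpoint by separating ``bad'' pairs (those with a close cop) from pairs whose two bridges are both blocked, and showing these two pools of obstructions overlap in at most one vertex of $C$; the paper instead packages both obstructions into the single function $f$ and uses a pigeonhole on $|\mathcal B(S)|=2(n-1)$. The paper's version is shorter and constructive (it hands you $T$), while yours makes the underlying double-counting more explicit and perhaps better isolates \emph{why} $n-1$ cops are insufficient.
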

\begin{proof}
For each vertex $v\in C$, let $f(v)=\mathcal{B}(S,E(v))$ if $E(v)\neq S$, and $f(v)=\mathcal{B}(S,v)$ otherwise. Note that $f(v)$ has cardinality at most two. We will consider the set $\bigcup_{v\in C} f(v) \subseteq \mathcal{B}(S)$. We divide the proof into two cases.
\emph{Case 1. $\bigcup_{v\in C} f(v) = \mathcal{B}(S)$.} Let $T=S$ and let us argue that $\dist(T,C)\geq \frac m2$.
Since $C$ has $n-1$ elements, for every $v\in C$ we have that $f(v)$ has cardinality two, hence $S\neq E(v)$, and therefore $\dist(S,v)\geq \frac m2$.
\emph{Case 2. $\bigcup_{v\in C} f(v) \subsetneq \mathcal{B}(S)$.}
Then there is a bridge $p$ in $\mathcal{B}(S)$ that is not in $\bigcup_{v\in C} f(v)$. The initial vertex of $p$ is a corner in $S$ and the end vertex is a corner in some pair of sibling corners $T$. Since $\mathcal{B}(S, v) \subset \mathcal{B}(S, E(v))$ if $S\neq E(v)$, we have that $\mathcal{B}(S,v) \subseteq f(v)$ for every $v\in C$. Since $p$ is not in
$\bigcup_{v\in C} f(v)$, it follows that no vertex of $p$ is in $C$. It is left to prove that $\dist(T,C)\geq \frac m2$. Let $v \in C$. In the case that $E(v)\neq S$, we have that $p \not \in f(v) = \mathcal{B}(S, E(v))$. It follows that $T\neq E(v)$ and hence $\dist(T,v)\geq \frac m 2$. On the other hand, if $E(v) = S$ then $\dist(S, v)\leq \frac{m+1}{2}$ and hence \[\dist(T,v)\geq \dist(S,T)-\dist (S,v)\geq m-\frac{m+1}{2} =\frac{m+1}{2}\geq \frac m 2.\qedhere.\]
\end{proof}
\begin{proof}[Proof of Proposition~\ref{lilTheta}]
We need to show that the robber has a strategy on which it is never captured. Fix the parameters $m$, $n$, $\sigma$, $\rho$ and $\psi$ satisfying the hypotheses.
The strategy for the robber on $\theta_{n,m}$ is described as follows. Let $c_{i,t}$ be the position of the $i$-th cop after his move during the $t$-stage of the game, let $r_t$ denote the position of the robber at the end of the $t$-stage, and let $C_t$ denote the set of vertices occupied by the cops at the end of the $t$-stage.
The initial position $r_0$ is chosen as follows. Let $C_0$ be the set of initial positions of the cops, this is a set with $n-1$ vertices. By Lemma~\ref{lemma:theta}, there is a pair of sibling corners $T$ such that $\dist( C_0, T)\geq \frac m2> \rho+\sigma$. Let the initial position of the robber be a corner $r_0$ in $T$. Observe that this is a safe position for the robber, and that the robber will not be captured during the first move of the cops, that is,
\[\dist(C_1, r_0)\geq \frac m2 -\sigma >\rho.\]
Suppose that during the $(t+1)$-stage, after the cops have moved, the robber is located in a \emph{corner} $r_t$ and the robber has not been captured. In particular,
\[\dist(C_{t+1}, r_t)> \rho.\]
If $\dist(C_{t+1}, r_t)\geq \frac{m}{2} >\rho+\sigma$ then the robber does not move in this stage, that is, $r_{t+1}=r_t$. In particular, the robber is still located in a corner and the robber will not be captured after the next move of the cops, that is,
\[\dist(C_{t+2}, r_{t+1})>\rho.\]
Now we consider the case that $\dist(C_{t+1}, r_t)< \frac{m}{2}$. Let $S$ be the pair of sibling corners containing $r_t$. By Lemma~\ref{lemma:theta}, there is a pair of sibling corners $T$ such that $\dist(T,C_{t+1})\geq \frac m2$ and a bridge $p$ from a corner in $S$ to a corner of $T$ such that $p$ has no vertices in $C_{t+1}$.
For the following stages of the game, the robber will move along the bridge $p$ until it reaches a corner in $T$.
The robber will reach that corner in $T$ at the end of the $t_*$-stage, where
\[t_* = t + \ceil*{\frac{m+2}{\psi}} ,\]
since $p$ has length at most $m+1$ and $r_t$ is at a distance at most one from the initial vertex of $p$. At the end of the $t_*$-stage the robber will be located at a corner. We will show that the robber is not captured during the $s$-stage for every $s\in\{t+1,\ldots ,t_*\}$, and hence this will complete the description of a winning strategy for the robber.
Let $s\in\{t+1,\ldots,t_*\}$.
Let $\tilde r_t$ and $r_{t_*}$ denote the initial and terminal vertices of the bridge $p$, respectively. We need to argue that the robber will not be captured while moving along the bridge towards $r_{t_*}$, that is, we need to show that
\[\dist(c_{i,s+1},r_s)>\rho\]
for every $i\in \{1,\ldots n\}$. Fix an $i$, and to simplify notation, let $c_{s+1}$ and $c_{t+1}$ denote $c_{i,s+1}$ and $c_{i,t+1}$, respectively.
Consider a geodesic $q$ from $c_{t+1}$ to $r_s$.
Since $c_{t+1}$ is not a vertex in the bridge $p$, the geodesic $q$ has to pass through the initial $\tilde r_t$ or terminal $r_{t_*}$ vertex of the bridge $p$. The rest of the argument is split into two cases.
\emph{Case 1.} Suppose that $q$ passes through $r_{t_*}$. Then the length of $q$ is at least $\frac{m}{2}$, since $r_{t_*}\in T$ and $\dist(c_{t+1}, r_{t_*})\geq \dist(C_{t+1}, T)\geq\frac m2$. Therefore
\[\begin{split}
\dist (c_{s+1}, r_s) & \geq \dist (c_{t+1}, r_s) - \dist (c_{t+1}, c_{s+1})\\ & \geq \dist (c_{t+1}, r_s) - (s-t)\sigma \\ & \geq \frac m2 - \sigma\ceil*{\frac{m+2}{\psi}} \\
& >\rho.
\end{split}\]
\emph{Case 2.} The geodesic $q$ from $c_{t+1}$ to $r_s$ passes through $\tilde r_{t}$. In this case
\[ \rho+(s-t)\psi-2 \leq \dist(c_{t+1}, \tilde r_t)+\dist(\tilde r_t, r_s) =\dist (c_{t+1},r_s). \]
Suppose, by contradiction, that $\dist(c_{s+1},r_s)\leq \rho$. Then
\[\begin{split}
\rho+(s-t)\psi-2 & \leq \dist (c_{t+1},r_s) \\
& \leq \dist(c_{t+1},c_{s+1})+\dist(c_{s+1}, r_s)\\
& \leq (s-t)\sigma + \rho.
\end{split}\]
This implies that $(s-t)(\psi-\sigma)\leq 2$. Since $1\leq s-t$, we have that $\psi-\sigma\leq 2$ which contradicts our assumption. Therefore, $\dist(c_{s+1},r_s)> \rho$.
\end{proof}
\subsection{Copies of $\theta_{n,m}$ in $\Theta_n(\Gamma)$}
In this part, we describe some canonical subgraphs of $\Theta_n(\Gamma)$ isomorphic to $\theta_{n,m}$, see Figure ~\ref{fig:Theta3Z} for an illustration. These subgraphs allow us to prove the following result by mimicking the argument proving Proposition~\ref{lilTheta}.
\begin{proposition}\label{lilTheta2}
Let $\Gamma$ be a connected graph, let $n$ be a positive integer, let $u_0$ be a vertex of $\Gamma$ and consider the $\Theta_n$-extension $\Theta_n(\Gamma)$ centered at $u_0$.
Let $m\geq 1$, $\sigma\geq 1$, $\rho>0$, $\psi\geq 1$ be integers. Suppose $\psi> \sigma +2$, $m>2(\sigma+\rho)$ and
\[\rho < \frac m2 - \sigma\ceil*{\frac{m+2}{\psi}} .\]
If there are vertices $x,y$ of $\Gamma$ such that
\[ \dist_\Gamma(u_0,x)=m,\qquad \dist_\Gamma(u_0,y)=m+1,\qquad \dist_\Gamma(x,y)=1, \]
then $\Theta_n(\Gamma)$ is $\mathsf{RobberWin}(n-1,\sigma,\rho,\psi, 2m+2)$.
\end{proposition}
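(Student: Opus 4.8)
The plan is to realize a copy of the finite graph $\theta_{n,m+1}$ as an induced subgraph of $\Theta_n(\Gamma)$, to verify that the surrounding vertices of $\Theta_n(\Gamma)$ give the cops no extra power against a robber confined to that copy, and then to rerun the robber strategy from the proof of Proposition~\ref{lilTheta}. Concretely, using the hypothesis let $\Lambda$ be the subgraph of $\Theta_n(\Gamma)$ consisting of the $2n$ corners $\eta_1(x),\dots,\eta_n(x),\eta_1(y),\dots,\eta_n(y)$, all interior vertices and edges of the bridges joining $\eta_i(x)$ to $\eta_j(x)$ (which have length $\dist_\Gamma(u_0,x)+1=m+1$), all interior vertices and edges of the bridges joining $\eta_i(y)$ to $\eta_j(y)$ (length $m+2$), and the $n$ edges $\{\eta_i(x),\eta_i(y)\}$, which exist in $\Gamma_i$ because $\dist_\Gamma(x,y)=1$. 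Matching this against the definition of $\theta_{n,m}$ shows $\Lambda\cong\theta_{n,m+1}$, with the $x$-copies and their bridges as the first level, the $y$-copies and their bridges as the second level, and the edges $\{\eta_i(x),\eta_i(y)\}$ as sibling edges. One checks in addition that $\Lambda$ is an \emph{induced} subgraph (distinct bridges share no interior vertex, interior vertices of bridges have degree $2$ in $\Theta_n(\Gamma)$, and corners in distinct copies $\Gamma_i$ are never adjacent), and a short estimate gives $\dist_{\Theta_n(\Gamma)}(\eta_1(u_0),v)\le 2m+2$ for every vertex $v$ of $\Lambda$; so it suffices to keep the robber inside $\Lambda$ forever without being captured.

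Two structural facts make the confinement work. First, every bridge of $\Theta_n(\Gamma)$ is a dead-end corridor: its interior vertices have degree $2$, so a path reaching a bridge interior from outside must enter through one of the bridge's two corner endpoints --- this is exactly the feature of $\theta_{n,m+1}$ exploited in Proposition~\ref{lilTheta}. Second, every vertex of $\Lambda$ is at distance at least $m$ from each $\eta_i(u_0)$, so once the robber commits to $\Lambda$ the whole game is played in the part of $\Theta_n(\Gamma)$ far from the base copies of $u_0$. Combining these, the only vertices of $\Theta_n(\Gamma)$ that can ever come within distance $\tfrac m2$ of the robber (which by the hypothesis $\rho<\tfrac m2-\sigma\ceil*{\tfrac{m+2}{\psi}}$ includes every cop position that could threaten him on the next stage) are vertices of $\Lambda$ together with points lying on an ``extra'' bridge (one joining $\eta_i(w)$ to $\eta_j(w)$ with $w\notin\{x,y\}$) that are within $\rho$ of an endpoint of that bridge which is itself within $\rho$ of $\Lambda$; each such point is within $\rho$ of a corner of $\Lambda$. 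Hence every cop position relevant to the robber may be replaced by a corner of $\Lambda$ (its ``shadow'') without decreasing its distance to any vertex of $\Lambda$, and a legal cop move of speed $\sigma$ in $\Theta_n(\Gamma)$ induces a legal move of speed $\sigma$ of the shadow in $\theta_{n,m+1}$.

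Granting this, I would simply replay the proof of Proposition~\ref{lilTheta} inside $\theta_{n,m+1}$ with each cop replaced by its shadow: by Lemma~\ref{lemma:theta} the robber starts at a corner of a sibling pair far from all $n-1$ shadows, sits still while this is safe, and otherwise escapes along a cop-free bridge (together with at most one sibling edge, so as to finish at a corner of the target pair) to a corner of a new far sibling pair. The three hypotheses $\psi>\sigma+2$, $m>2(\sigma+\rho)$ and $\rho<\tfrac m2-\sigma\ceil*{\tfrac{m+2}{\psi}}$ are exactly what the distance computations of Proposition~\ref{lilTheta} require in order to conclude that the robber is never captured, the replacement of $\theta_{n,m}$ by $\theta_{n,m+1}$ needing only the routine arithmetic adjustments built into those constants. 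Since the robber's position always lies in $\Lambda\subseteq B_{2m+2}(\eta_1(u_0))$, the $n-1$ cops cannot protect that ball; that is, $\Theta_n(\Gamma)$ is $\mathsf{RobberWin}(n-1,\sigma,\rho,\psi,2m+2)$.

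The step I expect to be the real work is the second one: proving that the extra bridges joining $\eta_i(w)$ to $\eta_j(w)$ with $w\notin\{x,y\}$ cannot be used by the cops either as shortcuts between the copies $\Gamma_i$ or as hiding places near $\Lambda$, so that for a robber confined to $\Lambda$ the graph $\Theta_n(\Gamma)$ behaves just like $\theta_{n,m+1}$. The dead-end structure of the bridges and the fact that the whole game takes place far from the vertices $\eta_i(u_0)$ are what make this true; once it is made precise, everything else is a transcription of the proof of Proposition~\ref{lilTheta}.
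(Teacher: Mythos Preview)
Your outline matches the paper's: embed a copy of $\theta_{n,m}$ (the paper's indexing; your $\theta_{n,m+1}$ is the correct length count, but the discrepancy is harmless), keep the robber on that subgraph, and rerun the robber strategy from Proposition~\ref{lilTheta}. You are also right that the only substantive work is showing that cops sitting outside $\Lambda$ give them no advantage.

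Where your proposal goes wrong is in the mechanism you choose for that step. Your ``shadow'' projection is both stronger than needed and not quite true as stated. First, the claim that every vertex within $\tfrac m2$ of the robber is either in $\Lambda$ or within $\rho$ of a corner of $\Lambda$ is false: a cop can sit at some $\eta_i(w)\in\Gamma_i$ with $\rho<\dist_\Gamma(w,x)<\tfrac m2$, which is neither in $\Lambda$ nor on an ``extra'' bridge interior nor $\rho$-close to a corner. Second, the assertion that a legal cop move of speed $\sigma$ induces a legal move of its shadow of speed $\sigma$ is unjustified and in general fails (a cop can cross between the attraction basins of two non-sibling corner pairs in a single step, forcing the shadow to jump distance $\geq m$).

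The paper avoids projecting cops altogether. It observes that $\Lambda$ is \emph{convex} in $\Theta_n(\Gamma)$ (so distances inside $\Lambda$ agree with ambient distances and geodesics to bridge interiors must enter through a corner), and then proves a direct analogue of Lemma~\ref{lemma:theta} for cops living anywhere in $\Theta_n(\Gamma)$ (Lemma~\ref{lemma:theta2}): one simply extends the function $E(v)$ to all vertices of $\Theta_n(\Gamma)$, declaring $E(v)=\emptyset$ when $v$ is farther than $\tfrac{m+1}{2}$ from every sibling pair, and reruns the pigeonhole count. With this lemma in hand, the distance estimates of Cases~1 and~2 in the proof of Proposition~\ref{lilTheta} go through verbatim using the actual cop positions, because any geodesic from a cop to a point on a bridge of $\Lambda$ still must pass through one of that bridge's two endpoints. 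No Lipschitz projection is required.
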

Below we use the notation introduced in Section~\ref{sec:ThetaExtensions} for the definition of $\Theta$-extensions of graphs. From here on we work under the assumptions of Proposition~\ref{lilTheta2}.
Let $\theta_n(\Gamma,x,y)$ be the induced subgraph of $\Theta_n(\Gamma)$ with vertex set
\[ V(\theta_n(\Gamma,x,y)) = \bigcup_{0\leq i<j<N}\left(V_{i,j,x}\cup V_{i,j,y}\right).\]
Observe that there is a natural isomorphism of graphs between the subgraph $\theta_n(\Gamma,x,y)$ and the graph $\theta_{n,m}$. Specifically, the set of vertices $\bigcup_{1\leq i<j\leq n}V_{i,j,x}$ corresponds to the first level, the set $\bigcup_{1\leq i<j\leq n}V_{i,j,y}$ correspond to the second level, and for each $i\in\{1,\ldots ,n\}$ the vertices $\eta_i(x)$ and $\eta_i(y)$ are sibling corners.
Observe that that $\theta_n(\Gamma,x,y)$ is a convex subgraph of $\Theta_n(\Gamma)$ in the sense that any geodesic path in $\Theta_n(\Gamma)$ with endpoints in $\theta_n(\Gamma,x,y)$ is contained in $\Theta_{n,m}$.
Abusing notation, from here on, {\bf let $\theta_{n,m}$ denote the subgraph $\theta_n(\Gamma,x,y)$ of $\Theta_n(\Gamma)$}.
\begin{lemma}\label{lemma:theta2}
Suppose that $m>1$. Let $S$ be a pair of sibling corners of $\theta_{n,m}$, and $C$ a collection of $n-1$ vertices of $\Theta_n(\Gamma)$ such that $S\cap C = \emptyset$.
Then there is a pair of sibling corners $T$ of $\theta_{n,m}$ such that $\dist_\Gamma (T,C)\geq \frac m2$ and either $S =T$ or there is a bridge from a corner in $S$ to a corner in $T$ that has no vertices in $C$.
\end{lemma}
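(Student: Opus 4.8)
The plan is to rerun the proof of Lemma~\ref{lemma:theta} almost verbatim, the one new feature being that $C$ may now contain vertices of $\Theta_n(\Gamma)$ lying outside the subgraph $\theta_{n,m}=\theta_n(\Gamma,x,y)$. Two structural facts make this manageable. First, since $\theta_{n,m}$ is convex in $\Theta_n(\Gamma)$, the $\Theta_n(\Gamma)$-distance between any two of its vertices equals their distance inside $\theta_{n,m}$; in particular two corners in distinct sheets are at distance $\ge m$ in $\Theta_n(\Gamma)$, exactly as in the abstract graph used in Lemma~\ref{lemma:theta}. Second, the only vertices of $\theta_{n,m}$ having a neighbour outside $\theta_{n,m}$ are the corners $\eta_i(x),\eta_i(y)$, because the interior vertices of the bridges are attached to the remainder of $\Theta_n(\Gamma)$ only along those bridges; consequently a geodesic of $\Theta_n(\Gamma)$ from a vertex outside $\theta_{n,m}$ into $\theta_{n,m}$ first meets $\theta_{n,m}$ at a corner. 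Throughout, $\dist$ denotes the path distance of $\Theta_n(\Gamma)$ (what the statement calls $\dist_\Gamma$).

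Next I would sort the vertices of $C$ by their distance to $\theta_{n,m}$. For $c\in C$ set $\delta(c)=\dist(c,V(\theta_{n,m}))$. If $\delta(c)\ge m/2$, then $c$ is at distance $\ge m/2$ from every corner, hence from every pair of sibling corners, and such a $c$ is harmless. If $\delta(c)<m/2$, fix a corner $w(c)$ realizing $\dist(c,w(c))=\delta(c)$ (one exists by the second fact above) and let $E(c)$ be the pair of sibling corners containing $w(c)$ (breaking ties arbitrarily). Since corners in distinct sheets are $\ge m$ apart, the triangle inequality gives that every corner lying in a sheet other than that of $E(c)$ is at distance $\ge m-\delta(c)>m/2$ from $c$; hence $\dist(c,T)> m/2$ for every pair of sibling corners $T\ne E(c)$. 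Moreover, if such a $c$ lies on a bridge of $\theta_{n,m}$ issuing from a corner of $S$ (so $c\notin S$, as $S\cap C=\emptyset$), then, because an interior vertex of a bridge is nearest to one of the two sheets at the ends of that bridge, $E(c)$ is the sheet of one of the two endpoints of that bridge.

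The combinatorial step is then the one in Lemma~\ref{lemma:theta}. Recall $\mathcal B(S)$, $\mathcal B(S,T)$ and $\mathcal B(S,c)$, with $|\mathcal B(S)|=2n-2$ and $|\mathcal B(S,T)|=2$ for $T\ne S$. For $c\in C$ define $f(c)=\emptyset$ if $\delta(c)\ge m/2$, $f(c)=\mathcal B(S,E(c))$ if $\delta(c)<m/2$ and $E(c)\ne S$, and $f(c)=\mathcal B(S,c)$ if $\delta(c)<m/2$ and $E(c)=S$; then $|f(c)|\le 2$, with equality only in the middle case. If $\bigcup_{c\in C}f(c)=\mathcal B(S)$, then since $|C|=n-1$ every $f(c)$ has exactly two elements, so every $c$ falls in the middle case with $E(c)\ne S$, whence $\dist(c,S)> m/2$ for all $c\in C$ and $T=S$ works. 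Otherwise choose a bridge $p\in\mathcal B(S)\setminus\bigcup_{c\in C}f(c)$, running from a corner of $S$ to a corner of a pair $T\ne S$. Any $c\in C$ on $p$ would satisfy $p\in\mathcal B(S,c)\subseteq f(c)$ if $E(c)=S$, and $p\in\mathcal B(S,T)=\mathcal B(S,E(c))=f(c)$ if $E(c)=T\ne S$ (using that $E(c)$ is an end-sheet of $p$); both contradict $p\notin\bigcup_{c\in C}f(c)$, so $p$ avoids $C$. Finally $\dist(c,T)\ge m/2$ for every $c\in C$: if $\delta(c)\ge m/2$ this is immediate, and if $\delta(c)<m/2$ then $T\ne E(c)$ (otherwise $p\in\mathcal B(S,T)=f(c)$), so the estimate of the previous paragraph applies. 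This produces the required pair $T$ and a bridge from $S$ to $T$ disjoint from $C$.

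The only genuine obstacle is controlling the part of $C$ lying outside $\theta_{n,m}$, and it is dissolved by the two structural facts of the first paragraph; afterwards $\delta(c)$ and $E(c)$ play the bookkeeping role that ``the sibling pair nearest to $v$'' plays in the proof of Lemma~\ref{lemma:theta}. I would leave to routine verification the pairwise distances among the corners of $\theta_{n,m}$ (any two in different sheets are $\ge m$ apart, irrespective of level) and the claim that an interior vertex of a bridge is nearest to one of its two end-sheets.
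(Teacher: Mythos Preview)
Your approach coincides with the paper's: extend the ``nearest sibling-pair'' assignment $E(\cdot)$ from $\theta_{n,m}$ to all of $\Theta_n(\Gamma)$ (declaring it empty for vertices far from $\theta_{n,m}$), then rerun the combinatorics of Lemma~\ref{lemma:theta} verbatim with the convention $f(c)=\emptyset$ when $E(c)=\emptyset$. One small slip worth repairing: you define $w(c)$ as a \emph{corner} with $\dist(c,w(c))=\delta(c)=\dist\bigl(c,V(\theta_{n,m})\bigr)$, but when $c$ is an interior bridge vertex of $\theta_{n,m}$ one has $\delta(c)=0$ while $c$ is not a corner, so no such $w(c)$ exists; the paper (and your own later reasoning about ``nearest end-sheet'') avoids this by taking $E(c)$ to be the closest pair of sibling corners directly, without routing through $\delta(c)$.
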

\begin{proof}
For each vertex $v$ of $\Theta_n(\Gamma)$, let $E(v)$ be the empty set if $\dist_\Gamma(v, \theta_{n,m})>\frac{m+1}{2}$.
If $\dist_\Gamma (v, \theta_{n,m})\leq \frac{m+1}{2}$ then choose a pair of sibling corners $E(v)$ such that $ \dist_\Gamma(v, E(v))$ is minimal. Observe that
\begin{itemize}
\item if $E(v) $ is nonempty, then $\dist_\Gamma(v, E(v))\leq \frac{m+1}{2}$.
\item if $U$ is a pair of sibling corners of $\theta_{n,m}$ with $U\neq E(v)$, then $\dist_\Gamma(v, U)\geq \frac m2$.
\end{itemize}
The second observation is a consequence of $\theta_{n,m}$ being a convex subgraph of $\Theta_n(\Gamma) $, and that different pairs of sibling corners of $\theta_{n,m}$ are at distance at least $m$.
The proof concludes by literally following the proof of Lemma~\ref{lemma:theta2} with the extended definition of $E(v)$ above, the convention that if $v\in C$ and $E(v)=\emptyset$ then $f(v)=\emptyset$, and using distances from $\Theta_n(\Gamma)$.
\end{proof}
\begin{proof}[Proof of Proposition~\ref{lilTheta2}]
Let $R=2m+2$. A game with $n-1$ cops with speed $\sigma$ and reach $\rho$ cannot protect the $R$-ball
$B_R(\eta_1(u_0))$ in $\Theta_n(\Gamma)$ from a robber with speed $\psi$. Indeed, there is a  strategy for the robber such that he always moves inside the subgraph $\theta_n(\Gamma, x,y)=\theta_{n,m}$ and avoids capture during the game. Since the ball $B_R(\eta_1(u_0))$ contains the subgraph $\theta_{n,m}$ this yields a winning strategy for the robber. The strategy is the one described in the proof of Proposition~\ref{lilTheta} and the proof that the strategy is successful for the robber is the same modulo using Lemma~\ref{lemma:theta2} instead of Lemma~\ref{lemma:theta}.
\end{proof}
\subsection{Proof of Theorem~\ref{CoroTcop}}
\begin{proof}[Proof of Theorem~\ref{CoroTcop}]
Suppose that $\Theta_n(\Gamma)$ is centered at the vertex $u_0$ of $\Gamma$. Let $\sigma$ and $\rho$ be arbitrary positive integers, let $\psi=2\sigma+3$. Let $m$ be a positive integer such that
\[ m > 2(\sigma+\rho) \text{ and } \rho < \frac m2 - \sigma\ceil*{\frac{m+2}{\psi}}.\]
Note that such $m$ exists since $\frac \sigma \psi < \frac 12$.
Since $\Gamma$ is an unbounded locally finite connected graph, it contains vertices $x$ and $y$ such that \[ \dist_\Gamma(u_0,x)=m,\qquad \dist_\Gamma(u_0,y)=m+1,\qquad \dist_\Gamma(x,y)=1. \]
Then Proposition~\ref{lilTheta2} implies that $\Theta_n(\Gamma)$ is $\mathsf{RobberWin}(n-1,\sigma,\rho,\psi, 2m+2)$. Since $\sigma$ and $\rho$ were arbitrary, and the choice of $\psi$ is independent of $\rho$, it follows that
\[ n-1 < \scop(\Theta_n(\Gamma)) \leq \wcop(\Theta_n(\Gamma)).\]
By Theorem~\ref{TcopN},
\[ \wcop(\Theta_n(\Gamma)) \leq n\cdot \wcop(\Gamma)\qquad \text{and}\qquad \scop(\Theta_n(\Gamma)) \leq n\cdot \scop(\Gamma).\]
Therefore if $\wcop(\Gamma)=1$ then $ \wcop(\Theta_n(\Gamma)) =n$, and analogously if
$\scop(\Gamma)=1$ then $ \scop(\Theta_n(\Gamma)) =n$.
\end{proof}
\bibliographystyle{alpha} 
\bibliography{ref}
\end{document}